\theoremstyle{definition}
\newtheorem{Def}{Definition}[section]
\newtheorem{ex}[Def]{Example}
\newtheorem{rem}[Def]{Remark}
\theoremstyle{plain}
\newtheorem{prop}[Def]{Proposition}
\newtheorem{thm}[Def]{Theorem}
\newtheorem*{thm*}{Theorem}
\newtheorem{lem}[Def]{Lemma}
\newtheorem{prob}[Def]{Problem}
\newtheorem{cor}[Def]{Corollary}
\newtheorem*{cor*}{Corollary}
\newtheorem*{con*}{Conjecture}
\newtheorem*{frag*}{Question}
\newtheorem*{verm*}{Vermutung}
\newcommand\ratto{\dashrightarrow}
\newcommand{\Hom}{\operatorname{Hom}} %External Hom
\newcommand{\shHom}{\operatorname{{\mathscr Hom}}} %Sheaf Hom
\newcommand{\Spec}{\operatorname{Spec}}
\newcommand{\Supp}{\operatorname{Supp}}
\newcommand{\rank}{\operatorname{rank}} %rank of a free module
\newcommand{\Quot}{\operatorname{Quot}}
\newcommand{\tr}{\operatorname{tr}}
\newcommand{\cD}{{\mathcal D}}
\newcommand{\cE}{{\mathcal E}}
\newcommand{\cF}{{\mathcal F}}
\newcommand{\cG}{{\mathcal G}}
\newcommand{\cK}{{\mathcal K}}
\newcommand{\cL}{{\mathcal L}}
\newcommand{\cO}{{\mathcal O}}
\newcommand{\cP}{{\mathcal P}}
\newcommand{\cV}{{\mathcal V}}
\newcommand{\fm}{{\mathfrak m}}
\newcommand{\fp}{{\mathfrak p}}
\newcommand{\C}{{\mathbb C}}
\newcommand{\R}{{\mathbb R}}
\newcommand{\pp}{\mathbb{P}}
\newcommand{\N}{{\mathbb N}}
\newcommand{\Z}{{\mathbb Z}}
\title[Positive Ulrich Sheaves]{Positive Ulrich Sheaves}
\author{Christoph Hanselka}
\address{Universit\"at Konstanz, Germany} 
\email{christoph.hanselka@uni-konstanz.de}
\author{Mario Kummer}
\address{Technische Universit\"at Berlin, Germany} 
\email{kummer@tu-berlin.de}
\thanks{The second author has been supported by the DFG under Grant No.421473641.}
\newcommand{\comment}[1]{}
\begin{document}

\subjclass[2010]{Primary: 14P05, 14J60; Secondary: 14M12, 12D15}

\begin{abstract}
 We provide a criterion for a coherent sheaf to be an Ulrich sheaf in terms of a certain bilinear form on its global sections. When working over the real numbers we call it a positive Ulrich sheaf if this bilinear form is symmetric or hermitian and positive definite. In that case our result provides a common theoretical framework for several results in real algebraic geometry concerning the existence of algebraic certificates for certain geometric properties. For instance, it implies Hilbert's theorem on nonnegative ternary quartics, via the geometry of del Pezzo surfaces, and the solution of the Lax conjecture on plane hyperbolic curves due to Helton and Vinnikov.
\end{abstract}
\maketitle

\section{Introduction}
A widespread principle in real algebraic geometry is to find and use algebraic certificates for geometric statements.
For example, a \emph{sum of squares} representation of a homogeneous polynomial $p\in\R[x_1,\ldots,x_n]_{2d}$ of degree $2d$ is a finite sequence of polynomials $g_1,\dots,g_r\in\R[x_1,\ldots,x_n]_d$ such that $$p=g_1^2+\ldots+g_r^2$$ and serves as an algebraic certificate of the geometric property that $p$ takes only nonnegative values at real points: $p(x)\geq0$ for all $x\in\R^n$. 
In an influential paper David Hilbert \cite{hilbert1888} showed that the converse is true if (and only if) $2d=2$, or $n=1$, or $(2d,n)=(4,3)$.
While in the first two cases this can be seen quite easily via linear algebra and the fundamental theorem of algebra respectively, the proof for the 
case $(2d,n)=(4,3)$ of ternary quartics is nontrivial. 
There have been several different new proofs of this statement in the last twenty years: via the Jacobian of the plane curve defined by $p$ \cite{jakobsottile} relying on results of Arthur Coble \cite{coble}, using elementary techniques \cite{clauspfister}, and as a special case of more general results on varieties of minimal degree \cite{minimalgreg}. 

Another instance, that has attracted a lot of attention recently, is the following. Let $h\in\R[x_0,\ldots,x_n]_d$ be a homogeneous polynomial for which there are real symmetric (or complex hermitian) matrices $A_0,\ldots,A_n$ and $r\in\Z_{>0}$ such that
$$h^r=\det(x_0A_0+\ldots+x_nA_n)$$
and $e_0A_0+\ldots+e_nA_n$ is positive definite for some $e\in\R^{n+1}$. In this case, we say that $h^r$ has a \emph{definite symmetric (or hermitian) determinantal representation} and it is a certificate that 
$h$ is \emph{hyperbolic with respect to $e$} in the sense that the univariate polynomial $h(te-v)\in\R[t]$ has only real zeros for all $v\in\R^{n+1}$: The minimal polynomial of a hermitian matrix has only real zeros. Peter Lax \cite{lax58} conjectured that for $n=2$ and arbitrary $d\in\Z_{>0}$, the following strong converse is true: Every hyperbolic polynomial $h\in\R[x_0,x_1,x_2]_d$ has a definite symmetric determinantal representation (up to multiplication with a nonzero scalar). This conjecture was solved to the affirmative by Bill Helton and Victor Vinnikov \cite{HV07}. Furthermore, every hyperbolic $h\in\R[x_0,x_1,x_2,x_3]_3$ has a definite hermitian determinantal representation \cite{buck} and for every quadratic hyperbolic polynomial  $h\in\R[x_0,\ldots,x_n]_2$ there is a $r\in\Z_{>0}$ such that $h^r$ has definite symmetric (or hermitian) determinantal representation \cite{timundthom}. On the other hand, if $d\geq4$ and $n\geq3$ or if $d\geq3$ and $n\geq42$, there are hyperbolic polynomials $h\in\R[x_0,\ldots,x_n]_d$ such that no power $h^r$ has a definite symmetric (or hermitian) determinantal representation \cite{Bra11, sos42}. The cases $(d,n)$ with $d=3$ and $3<n<42$ are  open.

The condition of $h\in\R[x_0,\ldots,x_n]_d$ being hyperbolic with respect to $e$ can be phrased in geometric terms as follows: Let $X=\cV(h)\subset\pp^n$ be the hypersurface defined by $h$. Then $h$ is hyperbolic with respect to $e$ if and only if the linear projection $\pi_e:X\to\pp^{n-1}$ with center $e$ is \emph{real fibered} in the sense that $\pi_e^{-1}(\pp^{n-1}(\R))\subset X(\R)$. This leads to a natural generalization of hyperbolicity to arbitrary embedded varieties introduced in \cite{sha14} and further studied in \cite{eliich}: A subvariety $X\subset\pp^n$ of dimension $k$ is \emph{hyperbolic} with respect to a linear subspace $E\subset\pp^n$ of codimension $k+1$ if $X\cap E=\emptyset$ and the linear projection $\pi_E:X\to\pp^k$ with center $E$ is real fibered. Furthermore, a polynomial $p\in\R[x_1,\ldots,x_n]_{2d}$ is nonnegative if and only if the double cover $X\to\pp^n$ ramified along the zero set of $p$ is real fibered, where $X$ is defined as the zero set of $y^2-p$ in a suitable weighted projective space. Thus both above mentioned geometric properties of  polynomials, being nonnegative and being hyperbolic, can be seen as special instances of real fibered morphisms.

Let $f:X\to Y$ be a morphism of schemes. A coherent sheaf $\cF$ on $X$ is called $f$\emph{-Ulrich} if there is a natural number $r>0$ such that $f_*\cF\cong\cO_Y^r$. If $X\subset\pp^n$ is a closed subscheme, then one defines a coherent sheaf $\cF$ on $X$ to be an \emph{Ulrich sheaf} per se if it is $\pi$-Ulrich for any finite surjective linear projection $\pi:X\to\pp^k$. This is equivalent to $H^i(X,\cF(-j))=0$ for $1\leq j\leq \dim(X)$ and all $i$. See \cite{ulrichintro} for an introduction to Ulrich sheaves. The question of which subvarieties of $\pp^n$ carry an Ulrich sheaf is of particular interest in the context of Boij--S\"oderberg theory \cite{boij}. 

For real fibered morphisms $f:X\to Y$ \emph{positive} $f$-Ulrich sheaves have been defined in \cite{eliich} and it was shown that a hypersurface in $\pp^n$ carries a positive Ulrich sheaf if and only if it is cut out by a polynomial with a definite determinantal representation. For subvarieties $X\subset\pp^n$ of higher codimension supporting a positive Ulrich sheaf is equivalent to admitting some generalized type of determinantal representation that was introduced in \cite{sha14} and motivated by operator theory. The existence of such a determinantal representation for $X$ implies that some power of the Chow form of $X$ has a definite determinantal representation. On the other hand, we will show that a real fibered double cover $f:X\to\pp^n$ ramified along the zero set of a homogeneous polynomial $p$ admits a positive $f$-Ulrich sheaf if and only if $p$ is a sum of squares. Therefore, the notion of positive Ulrich sheaves encapsulates both types of algebraic certificates for the above mentioned geometric properties of homogeneous polynomials, namely being a sum of squares and having a definite determinantal representation.

The main result of this article is a criterion for a coherent sheaf to be a positive Ulrich sheaf which implies all the above mentioned existence results on representations as a sum of squares and determinantal representations, and more. It only comprises a positivity criterion that can be checked locally as well as a condition on the dimension of the space of global sections (but suprisingly not of the higher cohomology groups) of the coherent sheaf at hand. 

To this end, after some preparations in \Cref{sec:symm}, we characterize Ulrich sheaves in terms of a certain bilinear mapping and its behavior on the level of global sections in \Cref{sec:ulr}, see in particular \Cref{thm:ulrichifnondeg}. In this part we work over an arbitrary ground field and we believe that these results can be of independent interest. 

After we review some facts on the codifferent sheaf in \Cref{sec:codif} that will be important later on, we focus on varieties over $\R$. In \Cref{sec:rf} we recall some facts about {real fibered morphisms}. We then define positive Ulrich sheaves in \Cref{sec:psd} after some preparations in \Cref{sec:herm}. \Cref{thm:ulrichifpsd} is the above mentioned convenient criterion for checking whether a sheaf is a positive Ulrich sheaf.
In \Cref{sec:detreps} we show that
for a given polynomial 
having a determinantal or sum of squares representation is equivalent to the existence of a certain positive Ulrich sheaf. From this the result on determinantal representations of quadratic hyperbolic polynomials from \cite{timundthom} follows directly. In order to make our general theory also applicable to other cases of interest, we specialize to Ulrich sheaves of rank one on irreducible varieties in \Cref{sec:smooth}. \Cref{thm:weil} gives a convenient criterion for  a Weil divisor giving rise to a positive Ulrich sheaf. Namely, under some mild assumptions, if $f:X\to Y$ is a real fibered morphism and $D$ a Weil divisor on $X$ such that $2D$ differs from the ramification divisor of $f$ only by a principal divisor defined by a nonnegative rational function, then the sheaf associated to $D$ is a positive $f$-Ulrich sheaf whenever its space of global sections has dimension $\deg(f)$.
A particularly nice case is that of hyperbolic hypersurfaces: The existence of a definite determinantal representation is guaranteed by the existence of a certain \emph{interlacer}, generalizing the construction for plane curves from \cite{hvelemt}, see \Cref{cor:dixon}.

In \Cref{sec:curves} we apply our theory to the case of curves and show how it easily implies the Helton--Vinnikov theorem on plane hyperbolic curves \cite{HV07} as well as its generalization to curves of higher codimension from \cite{sha14} using the $2$-divisibility of the Jacobian. Finally, in \Cref{sec:delpezzo} we consider the anticanonical map on real del Pezzo surfaces in order to reprove Hilbert's theorem on ternary quartics \cite{hilbert1888} and the existence of a hermitian determinantal representation on cubic hyperbolic surfaces \cite{buck}. We further prove a new result on quartic del Pezzo surfaces in $\pp^4$. Apart from our general theory, the only ingredients for this part are basic properties of (real) del Pezzo surfaces as well as the Riemann--Roch theorem.

\section{Preliminaries and notation}
For any scheme $X$ and $p\in X$ we denote by $\kappa(p)$ the residue class field of $X$ at $p$. If $X$ is separated, reduced (but not necessarily irreducible) and of finite type over a field $K$, we say that $X$ is a variety over $K$. For any coherent sheaf $\cF$ on $X$ we denote by $\rank_p(\cF)$ the dimension of the fiber of $\cF$ at $p$ considered as $\kappa(p)$-vector space. If $X$ is irreducible with generic point $\xi$, we simply denote $\rank(\cF)=\rank_\xi(\cF)$. If $X$ is a scheme (over a field $K$) and $L$ a field (extension of $K$), then we denote by $X(L)$ the set of all morphisms $\Spec(L)\to X$ of schemes (over $K$). For a field $K$ we let $\pp^n_K=\textrm{Proj}(K[x_0,\ldots,x_n])$ and if the field is clear from the context we omit the index and just write $\pp^n$. We say that a scheme is \emph{noetherian} if it can be covered by a finite number of open affine subsets that are spectra of noetherian rings.

\section{Bilinear mappings on coherent sheaves}\label{sec:symm}

\begin{Def}
 Let $X$ be a scheme and let $\cF_1,\cF_2$ and $\cG$ be  coherent sheaves on $X$. A $\cG$-\textit{valued pairing} of $\cF_1$ and $\cF_2$ is a morphism of coherent sheaves $\varphi: \cF_1 \otimes \cF_2 \to \cG$. Let $K$ be a field and $\alpha\in X(K)$, i.e., a morphism $\alpha: \Spec (K) \to X$. Then we get a bilinear map $\alpha^* \varphi$ on $\alpha^* \cF_1 \times \alpha^*\cF_2$ with values in $\alpha^* \cG$ which are just finite dimensional $K$-vector spaces. We say that $\varphi$ is \textit{nondegenerate at $\alpha\in X(K)$} if the map $\alpha^* \cF_1 \to \Hom_K( \alpha^*\cF_2, \alpha^* \cG) $ induced by $\alpha^* \varphi$ is an isomorphism.
\end{Def}

For the rest of this section, unless stated otherwise, let $X$ always be a geometrically integral scheme with generic point $\xi$ which is proper over a field $K$.

\begin{lem}\label{lem:basisfree}
 Let $\cF$ be a coherent sheaf on $X$ which is generated by global sections. If there is a $K$-basis of $H^0(X,\cF)$ which is also a $\kappa(\xi)$-basis of $\cF_\xi$, then $\cF\cong\cO_X^r$ where $r=\dim H^0(X,\cF)$.
\end{lem}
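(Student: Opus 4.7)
The plan is to construct an explicit isomorphism from the given data. Let $s_1,\ldots,s_r$ denote the hypothesized $K$-basis of $H^0(X,\cF)$ and define $\varphi\colon \cO_X^r \to \cF$ as the morphism of $\cO_X$-modules sending the $i$-th standard basis element to $s_i$. I claim $\varphi$ is an isomorphism.

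For surjectivity, I would use that $\cF$ is generated by global sections, so the canonical evaluation map $H^0(X,\cF)\otimes_K \cO_X \to \cF$ is surjective. The chosen $K$-basis identifies $H^0(X,\cF)\otimes_K \cO_X$ with $\cO_X^r$, and under this identification the evaluation map is exactly $\varphi$.

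For injectivity, set $\cK := \ker \varphi$. At the generic point $\xi$, the induced map $\varphi_\xi\colon \kappa(\xi)^r \to \cF_\xi$ sends the standard basis vectors to the germs of the $s_i$ at $\xi$, and by hypothesis these form a $\kappa(\xi)$-basis of $\cF_\xi$. Hence $\varphi_\xi$ is an isomorphism and $\cK_\xi = 0$. Now $\cO_X^r$ is torsion-free because $X$ is integral: on any affine open $U = \Spec A$, $A$ is a domain, so $A^r$ injects into $A^r \otimes_A \kappa(\xi) = \kappa(\xi)^r$. Consequently any submodule of $A^r$ whose localization at the zero ideal vanishes must itself be zero, so $\cK(U) = 0$. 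Covering $X$ by such opens forces $\cK = 0$, and $\varphi$ is the desired isomorphism.

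The argument is largely mechanical once the evaluation morphism is written down, and I do not anticipate a serious obstacle. The essential ingredient is integrality of $X$, which makes $\cO_X^r$ torsion-free so that the vanishing of the kernel at the generic point propagates to vanishing on all of $X$; properness and the field structure of $K$ play no further role beyond ensuring that $H^0(X,\cF)$ is a finite-dimensional $K$-vector space on which the basis hypothesis makes sense.
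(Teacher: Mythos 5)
Your proof is correct and follows essentially the same route as the paper: both take the map $\cO_X^r\to\cF$ determined by the chosen basis, note surjectivity from global generation, and kill the kernel by observing that it vanishes at $\xi$ and is torsion-free as a subsheaf of $\cO_X^r$ over the integral scheme $X$. Your version merely spells out the surjectivity identification and the torsion-freeness argument in slightly more detail.
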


\begin{proof}
 Let $\cK$ be the kernel of the map $\cO_X^r\to\cF$ that sends the unit vectors to the $K$-basis of $H^0(X,\cF)$. Since $\cF$ is generated by global sections, this map is surjective. We thus obtain the short exact sequence $$0\to\cK\to\cO_X^r\to\cF\to0.$$ Passing to the stalk at $\xi$ gives $\cK_\xi=0$ by our assumption. Since $\cK$ is torsion-free as a subsheaf of $\cO_X^r$, this implies that $\cK=0$ and therefore $\cO_X^r\cong\cF$.
\end{proof}

\begin{rem}
 Let $\varphi: \cF_1\otimes_{\cO_X} \cF_2 \to \cO_X$ be a pairing of the coherent sheaves $\cF_1$ and $\cF_2$. This induces a bilinear mapping $$V_1 \times V_2\to K$$ where $V_i=H^0(X,\cF_i)$ since $H^0(X,\cO_X)=K$. 
\end{rem}

\begin{lem}\label{lem:linindep}
 Let $\cF_1$ and $\cF_2$ be coherent sheaves on $X$, let $V_i=H^0(X,\cF_i)$ and let $s_1,\ldots,s_r$ be a basis of $V_1$. Let $\varphi: \cF_1\otimes_{\cO_X}\cF_2\to\cO_X$ be a pairing such that the induced  bilinear mapping $V_1 \times V_2\to K$ is nondegenerate. Then the images of $s_1,\ldots,s_r$ in the $\kappa(\xi)$-vector space $(\cF_1)_\xi$ are linearly independent.
\end{lem}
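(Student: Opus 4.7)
The plan is to argue by contradiction at the generic point, exploiting the $\cO_X$-bilinearity of $\varphi$ together with the identification $H^0(X,\cO_X) = K$ available for a geometrically integral proper $K$-scheme.

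I would suppose, for contradiction, that there exist $c_1,\ldots,c_r \in \kappa(\xi)$, not all zero, with $\sum_{i=1}^r c_i (s_i)_\xi = 0$ in $(\cF_1)_\xi$. Fix any $t \in V_2$. Since the stalk $\varphi_\xi$ is $\kappa(\xi)$-bilinear, tensoring the relation with $t_\xi$ and applying $\varphi_\xi$ yields
$$0 \;=\; \varphi_\xi\Bigl(\sum_{i=1}^r c_i (s_i)_\xi \otimes t_\xi\Bigr) \;=\; \sum_{i=1}^r c_i\, \varphi(s_i \otimes t)_\xi \quad \text{in } \kappa(\xi).$$
The key observation is that each $\varphi(s_i \otimes t)$ lives in $H^0(X,\cO_X)$, which equals $K$ because $X$ is geometrically integral and proper over $K$. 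Hence the germ $\varphi(s_i \otimes t)_\xi$ is just the scalar $\varphi(s_i,t) \in K \subset \kappa(\xi)$, and the previous identity becomes $\sum_{i=1}^r c_i\, \varphi(s_i,t) = 0$ in $\kappa(\xi)$, valid for every $t \in V_2$.

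To close the argument I would invoke the nondegeneracy of the global bilinear form: it gives an isomorphism $V_1 \to V_2^* = \Hom_K(V_2,K)$, $s \mapsto \varphi(s,-)$, so that the $K$-basis $s_1,\ldots,s_r$ admits a dual family $t_1,\ldots,t_r \in V_2$ with $\varphi(s_i,t_j) = \delta_{ij}$. Substituting $t = t_j$ into the identity above collapses it to $c_j = 0$ for every $j$, contradicting the choice of the $c_i$.

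The only conceptual content is the coupling of the generic-point relation, whose coefficients live in $\kappa(\xi)$, with the globally-defined pairing values $\varphi(s_i,t) \in K$: the inclusion $K \hookrightarrow \kappa(\xi)$ is what lets $K$-valued nondegeneracy rule out a $\kappa(\xi)$-linear dependence. Beyond this, the argument is routine linear algebra once $H^0(X,\cO_X)=K$ is in hand, and I do not anticipate a genuine obstacle.
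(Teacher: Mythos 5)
Your argument is correct and is essentially the proof given in the paper: both take the dual basis $t_1,\ldots,t_r\in V_2$ supplied by nondegeneracy, pass a hypothetical $\kappa(\xi)$-linear relation among the $(s_i)_\xi$ through $\varphi$ against each $t_j$, and use that the resulting pairing values lie in $K\subset\kappa(\xi)$ to conclude each coefficient vanishes. Your write-up merely makes explicit the identification $H^0(X,\cO_X)=K$ and the inclusion $K\hookrightarrow\kappa(\xi)$ that the paper leaves implicit.
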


\begin{proof}
 Since the bilinear mapping $V_1 \times V_2\to K$ is nondegenerate, there is a basis $t_1,\ldots,t_r\in V_2$ that is dual to $s_1,\ldots,s_r$ with respect to this bilinear mapping. Suppose $f_1,\ldots,f_r\in\kappa(\xi)$ such that $$f_1s_1+\ldots+f_rs_r=0.$$ Tensoring with $t_j$ and applying $\varphi$ yields $f_j\cdot\varphi(s_j\otimes t_j)=0$ and therefore $f_j=0$ since $\varphi(s_j\otimes s_j)=1$ by assumption.
\end{proof}

\begin{prop}\label{prop:ifglobgen}
 Let $\cF_1$ and $\cF_2$ be coherent sheaves on $X$, let $V_i=H^0(X,\cF_i)$ and let $\cF_1$ and $\cF_2$ be generated by  global sections. Let $\varphi: \cF_1\otimes_{\cO_X} \cF_2 \to \cO_X$ be a pairing such that the induced bilinear mapping $V_1 \times V_2\to K$ is nondegenerate. Then:
 \begin{enumerate}[a)]
  \item $\cF_i\cong\cO_X^r$ where $r=\dim V_i$;
  \item the morphism $\cF_1\to\shHom_{\cO_X}(\cF_2,\cO_X)$ corresponding to $\varphi$ is an isomorphism.
 \end{enumerate}
\end{prop}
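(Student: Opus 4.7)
The plan is to deduce both claims from Lemmas \ref{lem:basisfree} and \ref{lem:linindep}, using global generation to control rank from above and nondegeneracy of the pairing to control it from below.

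First I would prove (a) for $\cF_1$. Let $s_1,\ldots,s_r$ be a $K$-basis of $V_1$, so $r=\dim V_1$. Since the induced bilinear map $V_1\times V_2\to K$ is nondegenerate, \Cref{lem:linindep} tells us that the images of $s_1,\ldots,s_r$ in the $\kappa(\xi)$-vector space $(\cF_1)_\xi$ are linearly independent; hence $\rank(\cF_1)\geq r$. For the reverse inequality, global generation of $\cF_1$ means that the evaluation map $V_1\otimes_K\cO_X\to\cF_1$ is surjective, so passing to the stalk at $\xi$ gives a surjection $V_1\otimes_K\kappa(\xi)\twoheadrightarrow(\cF_1)_\xi$. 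This forces $\rank(\cF_1)\leq r$. Thus $s_1,\ldots,s_r$ is simultaneously a $K$-basis of $H^0(X,\cF_1)$ and a $\kappa(\xi)$-basis of $(\cF_1)_\xi$, and \Cref{lem:basisfree} yields $\cF_1\cong\cO_X^r$.

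The same reasoning applies to $\cF_2$: the pairing $\varphi$ can be read with the factors swapped, so \Cref{lem:linindep} (applied to a basis of $V_2$, using nondegeneracy on the other side) together with global generation of $\cF_2$ gives $\cF_2\cong\cO_X^{r'}$ with $r'=\dim V_2$. Nondegeneracy of the bilinear form on $V_1\times V_2$ forces $\dim V_1=\dim V_2$, so $r'=r$.

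For (b), consider the morphism $\psi\colon\cF_1\to\shHom_{\cO_X}(\cF_2,\cO_X)$ adjoint to $\varphi$. By part (a), both source and target are isomorphic to $\cO_X^r$; since $X$ is proper and geometrically integral over $K$, we have $H^0(X,\cO_X)=K$, so a morphism $\cO_X^r\to\cO_X^r$ is determined by a matrix with entries in $K$, namely the matrix describing $H^0(\psi)\colon V_1\to\Hom_K(V_2,K)$. The latter is an isomorphism because the pairing $V_1\times V_2\to K$ is nondegenerate, so the matrix is invertible, and $\psi$ is an isomorphism of sheaves. The main subtlety is the last step: turning the nondegeneracy on global sections into a sheaf-level isomorphism really requires both the freeness of $\cF_1,\cF_2$ established in (a) and the identification $H^0(X,\cO_X)=K$, so (b) cannot be separated from (a).
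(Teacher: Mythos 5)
Your proof is correct and follows the paper's own route: Lemma~\ref{lem:linindep} gives linear independence of the images of a basis of $V_1$ in $(\cF_1)_\xi$, global generation gives that they span, and Lemma~\ref{lem:basisfree} then yields $\cF_1\cong\cO_X^r$; the same applies to $\cF_2$ after swapping the factors. For part~(b) the paper simply says it "follows immediately from~(a)", while you spell out the reason — a morphism $\cO_X^r\to\cO_X^r$ on a proper geometrically integral $X$ is given by a matrix over $K=H^0(X,\cO_X)$ and is thus controlled by what it does on global sections — which is exactly the implicit content of the paper's remark.
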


\begin{proof}
 Let $s_1,\ldots,s_r\in V_1$ be a basis of $V_1$. By assumption, $s_1,\ldots,s_r$ span the $\kappa(\xi)$-vector space $(\cF_1)_\xi$ and by Lemma \ref{lem:linindep} they are linearly independent. Thus by Lemma \ref{lem:basisfree} we have $\cF_1\cong\cO_X^r$. The same argument applies to $\cF_2$ and Part $b)$ then follows immediately from a).
\end{proof}

\begin{lem}\label{lem:injective1}
 Let $\cF_1$ and $\cF_2$ be coherent torsion-free sheaves on $X$. Assume that the image of $V_i=H^0(X,\cF_i)$ spans the stalk $(\cF_i)_\xi$ as $\kappa(\xi)$-vector space. Furthermore, let $\varphi: \cF_1\otimes_{\cO_X} \cF_2 \to \cO_X$ be a pairing such that the induced bilinear mapping $V_1 \times V_2\to K$ is nondegenerate. Then the corresponding morphism $\cF_1\to\shHom_{\cO_X}(\cF_2,\cO_X)$ is injective.
\end{lem}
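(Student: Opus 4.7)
The plan is to reduce injectivity of the sheaf morphism $\psi \colon \cF_1 \to \shHom_{\cO_X}(\cF_2, \cO_X)$ to injectivity of the induced linear map at the generic point $\xi$, and then to apply Lemma \ref{lem:linindep} to exhibit explicit dual bases of $(\cF_1)_\xi$ and $(\cF_2)_\xi$ coming from global sections. I do not anticipate any real obstacle; the mildly subtle point is only the use of torsion-freeness in the reduction step.

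First I would consider the kernel $\cK$ of $\psi$. As a subsheaf of the torsion-free sheaf $\cF_1$, the sheaf $\cK$ is itself torsion-free, so on the integral scheme $X$ the stalk map $\cK_x \to \cK_\xi$ is injective for every $x \in X$. In particular, $\cK = 0$ if and only if $\cK_\xi = 0$, so it suffices to show that
\[
\psi_\xi \colon (\cF_1)_\xi \to \Hom_{\kappa(\xi)}((\cF_2)_\xi, \kappa(\xi))
\]
is injective.

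Next I would fix a basis $s_1, \ldots, s_r$ of $V_1$. By Lemma \ref{lem:linindep} its image in $(\cF_1)_\xi$ is linearly independent over $\kappa(\xi)$, and combined with the hypothesis that the image of $V_1$ spans $(\cF_1)_\xi$ this upgrades $s_1, \ldots, s_r$ to a $\kappa(\xi)$-basis of $(\cF_1)_\xi$. Since $X$ is proper over $K$, both $V_i$ are finite-dimensional, and nondegeneracy of the pairing $V_1 \times V_2 \to K$ forces $\dim_K V_2 = r$ and produces a dual basis $t_1, \ldots, t_r \in V_2$ with $\varphi(s_i \otimes t_j) = \delta_{ij}$.

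To conclude, any $f \in (\cF_1)_\xi$ with $\psi_\xi(f) = 0$ can be written as $f = \sum_j f_j s_j$ with $f_j \in \kappa(\xi)$. Pairing with $t_j$ (regarded in $(\cF_2)_\xi$ via the natural map $V_2 \to (\cF_2)_\xi$) yields $\varphi_\xi(f \otimes t_j) = f_j$, and the vanishing of all these coefficients forces $f = 0$. Hence $\psi_\xi$ is injective, which completes the argument.
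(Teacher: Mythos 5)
Your proof is correct and follows essentially the same route as the paper's: both arguments hinge on the dual basis $t_1,\ldots,t_r\in V_2$ supplied by nondegeneracy on global sections, the hypothesis that the $s_i$ span $(\cF_1)_\xi$, and torsion-freeness to pass between local sections and the generic stalk. The only cosmetic difference is that you reduce to injectivity of the stalk map at $\xi$ via the kernel, whereas the paper works on sections over an affine open and clears denominators to land in the span of the $s_i|_U$.
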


\begin{proof}
 Let $s_1,\ldots,s_r\in V_1$ be a basis of $V_1$ and let $t_1,\ldots,t_r\in V_2$ be the dual basis with respect to the bilinear mapping $V_1 \times V_2\to K$. Let $U\subset X$ be some open affine subset, $A=\cO_X(U)$ and $M_i=\cF_i(U)$. For any $0\neq g\in M_1$ there is $0\neq a\in A$ such that $a\cdot g$ is in the submodule of $M_1$ that is spanned by the restrictions $s_i|_U$: $$a\cdot g=f_1\cdot s_1|_U+\ldots+f_r\cdot s_r|_U$$ for some $f_j\in A$ that are not all zero. Let for instance $f_i\neq0$, then $$\varphi(a\cdot g\otimes t_i|_U)=a\cdot f_i\cdot\varphi(s_i\otimes t_i)\neq0.$$ This shows that the map $M\to\Hom_A(M,A)$ induced by $\varphi$ in injective.
\end{proof}

\begin{lem}\label{lem:injective2}
 Let $\cF$ be a coherent sheaf on $X$ and $\cG$ a subsheaf with $\cG_\xi=\cF_\xi$. Then the natural map $\shHom_{\cO_X}(\cF,\cO_X)\to\shHom_{\cO_X}(\cG,\cO_X)$ is injective.
\end{lem}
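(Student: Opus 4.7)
The plan is to exploit that $X$ is integral, so that $\cO_X$ is a subsheaf of the constant sheaf with value $\kappa(\xi)$, and then use that the stalk at $\xi$ controls everything. Concretely, I would argue that a local homomorphism $\varphi\colon \cF\to \cO_X$ whose restriction to $\cG$ vanishes must be zero, which is exactly what injectivity of $\shHom_{\cO_X}(\cF,\cO_X)\to\shHom_{\cO_X}(\cG,\cO_X)$ demands (and injectivity can be checked on sections over each open set, or equivalently on stalks).

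First I would reduce to an open $U\subset X$ and a section $\varphi \in \shHom_{\cO_X}(\cF,\cO_X)(U) = \Hom_{\cO_U}(\cF|_U, \cO_U)$ whose image in $\Hom_{\cO_U}(\cG|_U,\cO_U)$ is zero, and I would show $\varphi = 0$. Passing to the stalk at $\xi\in U$ (which lies in every nonempty open because $X$ is integral), we obtain a map $\varphi_\xi\colon \cF_\xi\to (\cO_X)_\xi=\kappa(\xi)$ which vanishes on $\cG_\xi$. By hypothesis $\cG_\xi=\cF_\xi$, so $\varphi_\xi=0$.

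Next I would invoke the standard fact that for an integral scheme $X$, the restriction map $\cO_X(V)\to \kappa(\xi)$ is injective for every open $V\subset X$ (this is because $\cO_X(V)$ is a subring of the function field, or equivalently because $\cO_X$ embeds in the constant sheaf $\underline{\kappa(\xi)}$). Hence, for any open $V\subseteq U$ and any $m\in \cF(V)$, the element $\varphi(m)\in \cO_X(V)$ has image $\varphi_\xi(m_\xi)=0$ in $\kappa(\xi)$, which forces $\varphi(m)=0$. Therefore $\varphi=0$, as desired.

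There is no serious obstacle here; the only subtlety worth flagging is that we are not assuming $\cF$ is torsion-free, so we cannot embed $\cF$ itself into $\cF_\xi$. What saves us is that the target $\cO_X$ is torsion-free (indeed integral), so any homomorphism into it is detected at the generic point.
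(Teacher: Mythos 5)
Your proof is correct and uses essentially the same idea as the paper: that $\cO_X$ is torsion-free (equivalently, embeds into the constant sheaf $\kappa(\xi)$) so that a homomorphism into $\cO_X$ is determined by its stalk at $\xi$. The paper phrases this by clearing denominators — for $g\in M=\cF(U)$ there is a nonzero $t\in A=\cO_X(U)$ with $tg\in N=\cG(U)$, so $t\varphi(g)=0$ and hence $\varphi(g)=0$ since $A$ is a domain — whereas you pass directly to the generic stalk and invoke injectivity of $\cO_X(V)\to\kappa(\xi)$; these are two phrasings of the same observation.
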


\begin{proof}
 Let $U\subset X$ be some open affine subset, $A=\cO_X(U)$, $M=\cF(U)$ and $N=\cG(U)$. Consider a morphism $\varphi: M\to A$ such that $\varphi|_N=0$. For every $g\in M$ there is a nonzero $t\in A$ such that $t\cdot g\in N$. Thus $\varphi(t\cdot g)=t\cdot\varphi(g)=0$ and therefore $\varphi(g)=0$. This shows that $\varphi=0$.
\end{proof}

\begin{thm}\label{thm:freeifnondeg}
 Let $\cF_1$ and $\cF_2$ be coherent torsion-free sheaves on $X$ and let $V_i=H^0(X,\cF_i)$. Let $\varphi: \cF_1\otimes_{\cO_X}\cF_2\to\cO_X$ be a pairing such that the induced bilinear mapping $V_1 \times V_2\to K$ is nondegenerate. If $\dim V_1 \geq \rank \cF_1$, then $\cF_1\cong\cO_X^r$ and $r=\dim V_1 = \rank \cF_1$. Furthermore, $\cF_1\to\shHom_{\cO_X}(\cF_2,\cO_X)$ is an isomorphism.
\end{thm}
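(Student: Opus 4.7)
The strategy is to use the nondegenerate pairing to construct, on both sides, preferred free subsheaves via explicit dual bases, and then compare these with $\cF_1$ and $\cF_2$ by means of the preceding lemmas.

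First, pick a $K$-basis $s_1,\ldots,s_r$ of $V_1$. By Lemma \ref{lem:linindep} their images in $(\cF_1)_\xi$ are $\kappa(\xi)$-linearly independent, so $r=\dim V_1\le\rank\cF_1$; combined with the hypothesis $\dim V_1\ge\rank\cF_1$ this forces $r=\rank\cF_1$, and the $s_i$ then form a $\kappa(\xi)$-basis of $(\cF_1)_\xi$. Let $t_1,\ldots,t_r\in V_2$ be the $K$-basis dual to $s_1,\ldots,s_r$ under the nondegenerate pairing, and let $\cG\subseteq\cF_1$, $\cH\subseteq\cF_2$ be the subsheaves generated by the $s_i$ and the $t_j$, respectively. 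Running the argument from Lemma \ref{lem:basisfree}, the surjections $\cO_X^r\twoheadrightarrow\cG$ and $\cO_X^r\twoheadrightarrow\cH$ sending unit vectors to these sections have kernels that are torsion-free (as subsheaves of $\cO_X^r$) and zero at $\xi$, hence zero; so $\cG\cong\cO_X^r\cong\cH$.

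In the bases $(s_i)$ and $(t_j)$, the restricted pairing $\cG\otimes\cH\to\cO_X$ is represented by the identity matrix, since $\varphi(s_i\otimes t_j)=\delta_{ij}\in K=H^0(X,\cO_X)$; in particular the adjoint $\cG\to\shHom(\cH,\cO_X)$ is an isomorphism. To upgrade $\cG\cong\cO_X^r$ to $\cF_1\cong\cO_X^r$, I consider the composition
\[
\cG\hookrightarrow\cF_1\longrightarrow\shHom(\cH,\cO_X),
\]
which equals the isomorphism just built. Lemma \ref{lem:injective1} applied to the pair $(\cF_1,\cH)$---whose spanning hypotheses hold because the $s_i$ span $(\cF_1)_\xi$ and the $t_j$ span $\cH_\xi$---shows the second arrow is injective. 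The composition being an isomorphism then forces both arrows to be isomorphisms, so $\cF_1=\cG\cong\cO_X^r$.

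For the ``Furthermore'' assertion, I factor the pairing-induced morphism through the restriction to $\cH$:
\[
\cF_1\longrightarrow\shHom(\cF_2,\cO_X)\longrightarrow\shHom(\cH,\cO_X).
\]
The composite is the isomorphism from the previous paragraph, so the first arrow is injective and the second surjective; Lemma \ref{lem:injective2} applied to $\cH\subseteq\cF_2$ gives injectivity of the restriction map, whence both arrows in the factorization are isomorphisms. The step I expect to be the main obstacle is verifying the hypothesis of Lemma \ref{lem:injective2}, namely $\cH_\xi=(\cF_2)_\xi$ (equivalently $\rank\cF_2=r$): unlike the inequality $r\le\rank\cF_1$, the matching bound $\rank\cF_2\le r$ is not formally implied by the asymmetric hypothesis $\dim V_1\ge\rank\cF_1$ and has to be extracted from the nondegeneracy of $\varphi$ together with the freeness of $\cF_1$ already established.
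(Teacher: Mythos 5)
Your proof of the main conclusion $\cF_1\cong\cO_X^r$ is correct and follows essentially the same architecture as the paper's: pass to the subsheaves generated by the dual bases, identify them with $\cO_X^r$ via Lemma \ref{lem:basisfree}/Proposition \ref{prop:ifglobgen}, and run a diagram chase through Lemma \ref{lem:injective1}. Your variant is in fact slightly cleaner: by mapping $\cF_1$ directly into $\shHom_{\cO_X}(\cH,\cO_X)$ you avoid needing injectivity of $\shHom_{\cO_X}(\cF_2,\cO_X)\to\shHom_{\cO_X}(\cG_2,\cO_X)$, whereas the paper's diagram uses Lemma \ref{lem:injective2} there and justifies it by asserting that $V_2$ spans $(\cF_2)_\xi$ --- an assertion which for $i=2$ does not follow from the stated (asymmetric) hypothesis.

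The obstacle you flag for the ``Furthermore'' is a genuine gap, and it cannot be closed from the stated hypotheses: the missing bound $\rank\cF_2\le r$ is false in general, and with it the ``Furthermore'' itself. Take $X=\pp^1$, $\cF_1=\cO_{\pp^1}$, $\cF_2=\cO_{\pp^1}\oplus\cO_{\pp^1}(-1)$ and $\varphi(a\otimes(c,d))=ac$. Then $V_1\cong V_2\cong K$, the induced pairing on global sections is nondegenerate, and $\dim V_1=1=\rank\cF_1$, yet $\cF_1\to\shHom_{\cO_X}(\cF_2,\cO_X)\cong\cO_{\pp^1}\oplus\cO_{\pp^1}(1)$ is injective but not surjective. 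So no argument from ``the nondegeneracy of $\varphi$ together with the freeness of $\cF_1$'' can yield $\cH_\xi=(\cF_2)_\xi$; one must add the hypothesis that $V_2$ spans $(\cF_2)_\xi$ (for instance $\dim V_2\ge\rank\cF_2$, which is automatic in the symmetric case $\cF_1=\cF_2$ used in the paper's applications). With that extra hypothesis your Lemma \ref{lem:injective2} step completes the proof exactly as you outline. The paper's own proof silently makes the same unjustified spanning claim, so you have matched it on the main statement and correctly isolated the one step that genuinely fails.
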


\begin{proof}
	By Proposition \ref{prop:ifglobgen}a) it suffices to show that $\cF_i$ is generated by global sections. Let $\cG_i$ be the subsheaf of $\cF_i$ generated by its global sections $V_i$. We get the {commutative} diagram  
 \[ \begin{tikzcd}
   \cG_1 \arrow{d} \arrow{r} & \shHom_{\cO_X}(\cG_2,\cO_X)  \\
   \cF_1 \arrow{r} & \shHom_{\cO_X}(\cF_2,\cO_X) \arrow{u}
  \end{tikzcd}. \]
 The homomorphism $\cG_1  \to \shHom_{\cO_X}(\cG_2,\cO_X)$ is an isomorphism by Proposition \ref{prop:ifglobgen}b). By Lemma \ref{lem:linindep} and the condition on the dimension it follows that the image of $V_i=H^0(X,\cF_i)$ spans the stalk $(\cF_i)_\xi$ as $\kappa(\xi)$-vector space and that $(\cG_i)_\xi=(\cF_i)_\xi$.
	Thus, the bottom and right maps in the diagram are also injective by Lemmas~\ref{lem:injective1} and \ref{lem:injective2}, respectively.
	This implies that $\cG_1=\cF_2$ and therefore $\cF_1$ is generated by global sections.
\end{proof}

\begin{ex}\label{ex:nondegcruc}
 This example is to illustrate that the assumption in \Cref{thm:freeifnondeg} of being nondegenerate on global sections is crucial.
 Let $X={\pp^1}$ and $\xi$ the generic point of ${\pp^1}$. Consider the coherent torsion-free sheaf $\cF=\cO_{\pp^1}(1)\oplus\cO_{\pp^1}(-1)$ on ${\pp^1}$. We have $\dim H^0({\pp^1},\cF) = 2 = \rank_\xi \cF$. On $\cF$ we define the pairing $\varphi: \cF\otimes_{\cO_X} \cF \to \cO_X$ that sends $(a,b)\otimes(c,d)$ to $ad+bc$ which is nondegenerate at $\xi$. But the induced bilinear form on the global sections of $\cF$ is identically zero and $\cF\not\cong\cO_{\pp^1}^2$.
\end{ex}

\section{Ulrich sheaves}\label{sec:ulr}

\begin{Def}
 Let $f:X\to Y$ be a morphism of schemes. A coherent sheaf $\cF$ on $X$ is \emph{$f$-Ulrich} if $f_*\cF\cong\cO_Y^r$ for some natural number $r$.
\end{Def}

\begin{rem}\label{rem:ulrichconnected}
 Let $f_i:X_i\to Y$ be finitely many morphisms of schemes. Let $X$ be the disjoint union of the $X_i$ and $f:X\to Y$ the morphism induced by the $f_i$. A coherent sheaf $\cF$ on $X$ is $f$-Ulrich if and only if $\cF|_{X_i}$ is $f_i$-Ulrich for all $i$. Thus we can usually restrict to the case when $X$ is connected.
\end{rem}

A case of particular interest is when $X$ is an embedded projective variety.

\begin{prop}[Prop.~2.1 in \cite{ES03}]\label{prop:ulrichchara}
 Let $X\subset\pp^n$ be a closed subscheme and $f: X\to\pp^{k}$ a finite surjective linear projection from a center that is disjoint from $X$. Let $\cF$ be a coherent sheaf whose support is all of $X$. Then the following are equivalent:
 \begin{enumerate}[(i)]
  \item $\cF$ is $f$-Ulrich;
  \item $H^i(X,\cF(-i))=0$ for $i>0$ and $H^i(X,\cF(-i-1))=0$ for $i<k$;
  \item $H^i(X,\cF(j))=0$ for all $1\leq i\leq k-1$, $j\in\Z$; $H^0(X,\cF(j))=0$ for $j<0$ and $H^k(X,\cF(j))=0$ for $j\geq -k$;
  \item the module $M=\oplus_{i\in\Z}H^0(X,\cF(i))$ of twisted global sections is a Cohen--Macaulay module over the polynomial ring $S=K[x_0,\ldots,x_n]$ of dimension $k+1$ whose minimal $S$-free resolution is linear.
 \end{enumerate}
\end{prop}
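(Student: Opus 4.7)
The plan is to reduce every cohomological condition on $X$ to one on $\pp^k$ via the finite linear projection $f$ and then analyze the resulting sheaf on $\pp^k$. Since $f$ is finite, $R^if_*=0$ for $i>0$; since $f$ is a linear projection, $\cO_X(1)\cong f^*\cO_{\pp^k}(1)$, so the projection formula gives $f_*(\cF(j))\cong(f_*\cF)(j)$. Writing $\cG:=f_*\cF$, the Leray spectral sequence collapses to furnish natural isomorphisms $H^i(X,\cF(j))\cong H^i(\pp^k,\cG(j))$ for all $i,j$, and condition (i) becomes the assertion $\cG\cong\cO_{\pp^k}^r$.

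With this reduction, (i) $\Rightarrow$ (iii) is a direct calculation: each $H^i(\pp^k,\cG(j))$ equals $r$ copies of $H^i(\pp^k,\cO(j))$, and the three vanishings listed in (iii) are the classical cohomology of line bundles on $\pp^k$ (namely $H^0(\cO(j))=0$ for $j<0$, intermediate $H^i$ vanishing, and $H^k(\cO(j))=0$ for $j\ge-k$). The implication (iii) $\Rightarrow$ (ii) is immediate since (ii) asks only for the two anti-diagonal vanishings $H^i(\cF(-i))=H^i(\cF(-i-1))=0$ that are already part of (iii).

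For the substantive direction (ii) $\Rightarrow$ (i) I would proceed as follows. The first half of (ii) says $\cG$ is Castelnuovo--Mumford $0$-regular on $\pp^k$, hence globally generated with $H^i(\cG(j))=0$ for $i>0$ and $j\ge -i$. Feeding the second half through Serre duality $H^i(\cG(j))^*\cong\Ext^{k-i}(\cG,\omega_{\pp^k}(-j))$, one obtains $\Ext^j(\cG,\cO(-j))=0$ for $j>0$, a ``dual $0$-regularity'' that, combined with the first half, extends the vanishing to $H^i(\cG(j))=0$ also for $i<k$ and $j\le -i-1$. For $1\le i\le k-1$ these two ranges together cover all $j\in\Z$, so Horrocks' splitting criterion applies and $\cG$ decomposes as $\bigoplus_\ell\cO_{\pp^k}(a_\ell)$. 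The remaining boundary conditions $H^0(\cG(-1))=0$ and $H^k(\cG(-k))=0$ then force $a_\ell\le 0$ and $a_\ell\ge 0$ respectively, yielding $\cG\cong\cO_{\pp^k}^r$.

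For the equivalence with (iv), choose coordinates on $\pp^n$ so that the center of projection is $\cV(y_0,\ldots,y_k)$; the inclusion $T:=K[y_0,\ldots,y_k]\hookrightarrow S$ realizes $\pp^k=\Proj T$, and $\widetilde M$ on $\pp^k$ is precisely $\cG$. Thus (i) says $M$ is free over $T$ with basis in degree $0$, which by Auslander--Buchsbaum on the regular ring $T$ is equivalent to $M$ being Cohen--Macaulay of depth $k+1$; since a maximal $M$-regular sequence can be chosen among the $y_i$ after a generic coordinate change, the $T$-depth agrees with the $S$-depth, giving the CM condition in (iv). The linear minimal $S$-resolution of length $n-k$ then arises by tensoring $M\cong T^r$ with the Koszul resolution of $T$ over $S$ on the complementary linear forms $x_{k+1},\ldots,x_n$; conversely, a linear resolution together with the CM condition forces $M$ to be $T$-free. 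The main obstacle is the Serre duality step in (ii) $\Rightarrow$ (i): the duality produces an $\Ext$ vanishing rather than a cohomological vanishing of the dual sheaf, and bootstrapping it into the full Horrocks intermediate-cohomology vanishing requires care, since $\cG$ is only known to be locally free a posteriori; however, once Horrocks delivers the splitting, the remainder of the argument is clean.
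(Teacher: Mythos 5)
First, a point of comparison: the paper does not prove this proposition at all --- it is quoted from \cite{ES03}, so your argument can only be measured against the standard proofs in the literature. Your reduction to $\pp^k$ (finiteness of $f$ kills higher direct images, the projection formula gives $f_*(\cF(j))\cong(f_*\cF)(j)$, Leray identifies the cohomologies) is exactly right, and (i)$\Rightarrow$(iii)$\Rightarrow$(ii) is correct. The genuine gap sits precisely where you flag it, in (ii)$\Rightarrow$(i). Serre duality does convert $H^i(\cG(-i-1))=0$ into $\Ext^l(\cG,\cO(-l))=0$ for $l>0$, but for a coherent sheaf $\cG=f_*\cF$ that is not known to be locally free this is \emph{not} the $0$-regularity of any dual sheaf, so Mumford's theorem cannot be applied to propagate it to $\Ext^l(\cG,\cO(m))=0$ for all $m\ge-l$; and Horrocks' criterion is a statement about vector bundles, while local freeness of $\cG$ is essentially the conclusion you are after. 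The step cannot be patched by "care": as written the bootstrap simply has no engine. The standard repair is shorter and avoids duality entirely. Since $\cG$ is $0$-regular it is globally generated; set $r=h^0(\cG)$ and take the evaluation sequence $0\to\cK\to\cO_{\pp^k}^r\to\cG\to0$. The long exact sequences of the twists by $-1,\dots,-k$ give $H^1(\cK(-1))\cong H^0(\cG(-1))=0$ and, for $2\le i\le k$, an exact piece $H^{i-1}(\cO(-i))^r\to H^{i-1}(\cG(-i))\to H^i(\cK(-i))\to H^i(\cO(-i))^r$ whose outer terms vanish on $\pp^k$ and whose second term vanishes by the second half of (ii). Hence $\cK$ is itself $0$-regular, so globally generated; since $H^0(\cK)=0$ by the choice of $r$, we get $\cK=0$ and $\cG\cong\cO_{\pp^k}^r$. (Equivalently one can work with $M=\Gamma_*(\cG)$ over $T=K[y_0,\dots,y_k]$ and graded local cohomology, but one must still manufacture the vanishing of \emph{all} intermediate twists before Auslander--Buchsbaum applies, which is the same difficulty in module clothing.)

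A second, smaller error occurs in your treatment of (iv): the linear $S$-resolution of $M$ is not obtained by "tensoring $M\cong T^r$ with the Koszul resolution of $T$ over $S$". The isomorphism $M\cong T^r$ holds only as $T$-modules; the remaining variables act on $M$ by nonzero matrices of linear forms (already for a hypersurface of degree $d$ the resolution is $0\to S(-1)^{d}\to S^{d}\to M\to0$ given by such a matrix, not a Koszul complex). The correct route is numerical: $M_j=0$ for $j<0$, $M$ is generated in degree $0$, and $\reg_S M=\reg_T M=0$, which together force $\beta_{p,j}(M)=0$ unless $j=p$, while Cohen--Macaulayness of dimension $k+1$ gives the length $n-k$ via Auslander--Buchsbaum. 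Your depth comparison $\depth_T M=\depth_S M$ and the resulting equivalence of (i) with $T$-freeness in degree $0$ are fine.
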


If $\cF$ as in \Cref{prop:ulrichchara} satisfies these equivalent conditions, then we say that $\cF$ is an \emph{Ulrich sheaf} on $X$ without specifying the finite surjective linear projection $f$ as conditions $(ii)$---$(iv)$ do not depend on the choice of $f$. A major open question in this context is the following:

\begin{prob}[p.~543 in \cite{ES03}]
 Is there an Ulrich sheaf on every closed subvariety $X\subset\pp^n$?
\end{prob}

We now want to apply the results from \Cref{sec:symm} to give a criterion for a sheaf to be Ulrich. For this we need a relative notion of nondegenerate bilinear mappings. Let $f: X \to Y$ be a finite morphism of noetherian schemes. For any quasi-coherent sheaf $\cG$ on $Y$ we consider the sheaf $\shHom_{\cO_Y}(f_* \cO_X, \cG)$. Since this is a quasi-coherent $f_* \cO_X$-module, it corresponds to a quasi-coherent $\cO_X$-module which we will denote by $f^! \cG$. We recall the following basic lemma (cf. \cite[III \S6, Ex.~6.10]{Hart77}).

\begin{lem}\label{lem:grothbasic}
 Let $f: X \to Y$ be a finite morphism of noetherian schemes.
 Let $\cF$ be a coherent sheaf on $X$ and $\cG$ be a quasi-coherent sheaf on $Y$. There is a natural isomorphism
 \[f_* \shHom_{\cO_X}(\cF, f^! \cG) \to \shHom_{\cO_Y}(f_* \cF, \cG)\]
 of quasi-coherent $f_*\cO_X$-modules.
\end{lem}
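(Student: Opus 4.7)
The plan is to exploit that $f$ is finite, hence affine, and reduce the statement to a standard module-theoretic adjunction. First, since $f_*$, $\shHom$, and the construction of $f^!$ all commute with restriction to open subsets of $Y$, I would verify the assertion locally on $Y$ and so assume $Y = \Spec A$ and $X = \Spec B$ with $B$ a finite $A$-algebra. Under this reduction $\cF$ corresponds to a finitely generated $B$-module $M$, $\cG$ to an $A$-module $N$, and by the very definition of $f^!$ the sheaf $f^! \cG$ corresponds to the $B$-module $\Hom_A(B, N)$, the $B$-action coming from the right multiplication of $B$ on itself.

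The claim then becomes the classical coinduction-restriction adjunction: there is a natural isomorphism
\[\Phi \colon \Hom_B(M, \Hom_A(B, N)) \xrightarrow{\sim} \Hom_A(M, N), \qquad \varphi \mapsto \bigl(m \mapsto \varphi(m)(1)\bigr),\]
of $B$-modules, where on the right $M$ is regarded as an $A$-module through $A \to B$. An inverse is given by $\psi \mapsto \bigl(m \mapsto (b \mapsto \psi(bm))\bigr)$. Checking that both maps are well-defined $B$-linear homomorphisms and are mutually inverse is a direct computation.

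To upgrade this to an isomorphism of sheaves, I would pass to principal open subsets $D(a) \subset Y$ and observe that both sides behave correctly under localization at $a \in A$: the functor $\Hom_B(M, -)$ commutes with such localization since $M$ is finitely presented ($B$ is noetherian and $\cF$ coherent), and $\Hom_A(B, -)$ commutes with localization since $B$ is a finite $A$-module. Therefore the local isomorphisms $\Phi$ glue to a morphism of quasi-coherent sheaves on $Y$, and naturality in both $\cF$ and $\cG$ is immediate from the explicit formula.

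The one delicate point is the bookkeeping of the two distinct $B$-actions on $\Hom_A(B, N)$, namely the one used to form $\Hom_B(M, \Hom_A(B, N))$ (coming from precomposition with right multiplication on $B$) and the residual $B$-module structure that makes the result a quasi-coherent $f_*\cO_X$-module; one must make sure that $\Phi$ intertwines the latter structure with the $B$-action on $\Hom_A(M,N)$ obtained via the $B$-module structure on $M$. In the module-theoretic formulation above this is transparent, which is exactly why I would carry out the verification after the affine reduction rather than directly with sheaves.
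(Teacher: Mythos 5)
Your proof is correct. The paper itself gives no argument for this lemma and simply cites \cite[III~\S6, Ex.~6.10]{Hart77}, where exactly this statement is posed as an exercise; your solution is the standard one: reduce to the affine case $Y=\Spec A$, $X=\Spec B$, identify $f^!\cG$ with the $B$-module $\Hom_A(B,N)$ (with $(b\cdot\varphi)(m)=\varphi(bm)$, which is the structure spelled out in the remark following the lemma), and then exhibit the coinduction--restriction adjunction $\Hom_B(M,\Hom_A(B,N))\cong\Hom_A(M,N)$ via $\varphi\mapsto(m\mapsto\varphi(m)(1))$. Your sheafification step is also the right one: both $M$ over $B$ and $B$ over $A$ are finitely presented (finite modules over noetherian rings), so the $\shHom$ sheaves in question are quasi-coherent and formation of $\Hom$ commutes with localization, which is precisely what lets the module isomorphism globalize. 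One small remark: in the commutative setting the two $B$-actions you flag on $\Hom_A(B,N)$ coincide, and the $B$-action on $\Hom_B(M,\Hom_A(B,N))$ through $M$ agrees with the one through the target by $B$-linearity of $\varphi$, so the "delicate point" dissolves --- but it does no harm to check it explicitly.
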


Let $f: X \to Y$ be a finite morphism of noetherian schemes.
Let $\cF_1$ and $\cF_2$ be coherent sheaves on $X$ and consider an $f^! \cO_Y$-valued pairing, i.e.,
a morphism  $\cF_1\otimes\cF_2 \to f^! \cO_Y$ of coherent $\cO_X$-modules.
This corresponds to a morphism $\cF_1 \to \shHom_{\cO_X}(\cF_2, f^! \cO_Y)$.
Lemma \ref{lem:grothbasic} tells us that this gives us a morphism 
\[
 f_* \cF_1 \to \shHom_{\cO_Y}(f_* \cF_2, \cO_Y)
\]
which corresponds to an $\cO_Y$-valued pairing on the pushforwards $f_* \cF_1$ and $f_*\cF_2$.

\begin{rem}
 Let us explain here the affine case in more detail. To that end let $X=\Spec(B)$, $Y=\Spec(A)$ and $f:X\to Y$ be induced by the finite ring homomorphism $f^\#:A\to B$. Then $f^!\cO_Y$ is the sheaf on $X$ associated to the $B$-module $\Hom_A(B,A)$ whose $B$-module structure is given by $(b\cdot\varphi)(m)=\varphi(b\cdot m)$ for all $b,m\in B$. If $\cF_1$ and $\cF_2$ are the coherent sheaves associated to the $B$-modules $M_1$ and $M_2$, then a morphism $\cF_1 \otimes\cF_2 \to f^! \cO_Y$ of coherent $\cO_X$-modules corresponds to a homomorphism $\psi:M_1\otimes_B M_2\to\Hom_A(B,A)$ of $B$-modules. This gives rise to the following $A$-bilinear map on $M_1\times M_2$: $$(m_1,m_2)\mapsto (\psi(m_1,m_2))(1).$$This gives the $\cO_Y$-valued pairing of the pushforwards $f_* \cF_1$ and $f_*\cF_2$.
\end{rem}

\begin{Def}
 Let $f: X \to Y$ be a finite morphism of noetherian schemes.
 Let $\cF_1$ and $\cF_2$ be coherent sheaves on $X$ and consider an $f^! \cO_Y$-valued pairing $\varphi:\cF_1\otimes\cF_2 \to f^! \cO_Y$.
 For a field $K$ we say that $\varphi$ is \textit{nondegenerate at $\alpha\in Y(K)$} if the induced $\cO_Y$-valued pairing of the pushforwards $f_* \cF_1$ and $f_*\cF_2$ is nondegenerate at $\alpha$.
\end{Def}

Now let $Y$ be a geometrically irreducible variety which is proper over a field $K$. Let further $f:X\to Y$ be a finite surjective morphism and $\cF_1$, $\cF_2$ coherent sheaves on $X$ with a pairing $\cF_1\otimes\cF_2\to f^! \cO_Y$. We have seen that this induces an $\cO_Y$-valued pairing on the pushforwards which in turn induces a $K$-bilinear mapping $$H^0(X,\cF_1)\times H^0(X,\cF_2)\to K.$$

\begin{thm}\label{thm:ulrichifnondeg}
 Let $X$ be an equidimensional variety over a field $K$ with  irreducible components $X_1,\ldots,X_s$.
 Let $\cF_1$ and $\cF_2$ be coherent torsion-free sheaves on $X$ and let $V_i=H^0(X,\cF_i)$.
 Let $Y$ be a geometrically irreducible variety which is proper over $K$ and $f:X\to Y$ a finite surjective morphism. Assume that there is an  $f^! \cO_Y$-valued pairing of $\cF_1$ and $\cF_2$ such that the induced $K$-bilinear mapping $V_1\times V_2\to K$ is nondegenerate. Then the following are equivalent:
 \begin{enumerate}[(i)]
  \item $\dim V_1 \geq\sum_{i=1}^s \deg(f|_{X_i}) \cdot \rank (\cF_1|_{X_i})$;
  \item $\cF_1$ is $f$-Ulrich.
 \end{enumerate}
\end{thm}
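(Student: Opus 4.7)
The plan is to reduce to Theorem~\ref{thm:freeifnondeg} by pushing everything forward along $f$. By the discussion preceding the definition of nondegeneracy in the relative setting, the given $f^!\cO_Y$-valued pairing $\cF_1\otimes\cF_2\to f^!\cO_Y$ corresponds via Lemma~\ref{lem:grothbasic} to an $\cO_Y$-valued pairing $f_*\cF_1\otimes f_*\cF_2\to\cO_Y$. Since $f$ is affine, $H^0(Y,f_*\cF_i)=H^0(X,\cF_i)=V_i$, and the $K$-bilinear mapping $V_1\times V_2\to K$ induced by the pushforward pairing is, by construction, the same as the one induced by the original pairing. Hence by hypothesis, it is nondegenerate.

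Next I would verify the hypotheses of Theorem~\ref{thm:freeifnondeg} applied to the sheaves $f_*\cF_1,f_*\cF_2$ on $Y$. Since $f$ is finite, $f_*\cF_i$ is a coherent $\cO_Y$-module. As $f$ is finite and surjective and $X$ is equidimensional, $\dim X_i=\dim Y$ for each $i$, so each generic point $\xi_i$ of $X$ maps to the generic point $\eta$ of $Y$; combined with $\cF_i$ being torsion-free, a direct check on stalks shows that $f_*\cF_i$ is torsion-free. Moreover, decomposing the stalk at $\eta$,
\[
 (f_*\cF_1)_\eta \;=\; \bigoplus_{i=1}^s (\cF_1|_{X_i})_{\xi_i},
\]
and taking $\kappa(\eta)$-dimensions using $[\kappa(\xi_i):\kappa(\eta)]=\deg(f|_{X_i})$ gives the key rank formula
\[
 \rank(f_*\cF_1) \;=\; \sum_{i=1}^s \deg(f|_{X_i})\cdot\rank(\cF_1|_{X_i}).
\]

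For the implication (i)$\Rightarrow$(ii), condition (i) reads precisely $\dim V_1\geq\rank(f_*\cF_1)$, so Theorem~\ref{thm:freeifnondeg} applies to $Y$ (which is geometrically integral and proper over $K$ by assumption) and yields $f_*\cF_1\cong\cO_Y^r$ with $r=\dim V_1$, i.e.\ $\cF_1$ is $f$-Ulrich. For (ii)$\Rightarrow$(i), if $f_*\cF_1\cong\cO_Y^r$, then because $Y$ is geometrically integral and proper we have $H^0(Y,\cO_Y)=K$, hence $\dim V_1=r$; on the other hand $r=\rank(f_*\cF_1)$, which by the formula above equals $\sum_{i}\deg(f|_{X_i})\cdot\rank(\cF_1|_{X_i})$, so (i) holds (with equality).

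The main obstacle I anticipate is purely bookkeeping: confirming that $f_*\cF_i$ inherits torsion-freeness from $\cF_i$ (which uses equidimensionality of $X$ together with finiteness and surjectivity of $f$ to ensure that no generic point of $X$ is absorbed into a closed point of $Y$), and carefully deriving the rank identity for $f_*\cF_1$ in the presence of several components. Once these two points are secured, the theorem follows by a direct invocation of Theorem~\ref{thm:freeifnondeg}.
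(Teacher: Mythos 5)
Your proposal is correct and follows essentially the same route as the paper: the paper's proof likewise reduces to Theorem~\ref{thm:freeifnondeg} applied to $f_*\cF_1$ and $f_*\cF_2$, noting that torsion-freeness of the pushforwards follows from finiteness, surjectivity and equidimensionality, and citing the rank formula $\rank(f_*\cF_1)=\sum_i\deg(f|_{X_i})\rank(\cF_1|_{X_i})$ (which you rederive via the stalk decomposition at the generic point). Your explicit treatment of the easy direction (ii)$\Rightarrow$(i) is a harmless elaboration of what the paper leaves implicit.
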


\begin{proof}
 We will apply \Cref{thm:freeifnondeg} to the coherent sheaves $f_*\cF_1$ and $f_*\cF_2$. First we need that the $f_*\cF_i$ are torsion-free. This follows from the assumptions that $f$ is finite and surjective, $X$ is equidimensional and $\cF_i$ is torsion-free. Further by \cite[VI \S2, Prop.~2.7]{kollar} one has $\rank (f_*\cF_1)=\sum_{i=1}^s \deg(f|_{X_i}) \cdot \rank (\cF_1|_{X_i}).$
\end{proof}

\section{The codifferent sheaf}\label{sec:codif}

In this section we recall some properties of $f^!\cO_Y$ and its relation to the codifferent sheaf. Most of the results should be well known, but for a lack of adequate references we will include (at least partial) proofs here.

\begin{lem}\label{lem:shriekinv}
 Let $f:X\to Y$ be a finite morphism of noetherian schemes.
 \begin{enumerate}[a)]
  \item If $f$ is flat and both $X$ and $Y$ are Gorenstein, then $f^!\cO_Y$ is a line bundle.
  \item If $Y$ is a smooth variety over $K$ and $X$ is Gorenstein, then $f^!\cO_Y$ is a line bundle.
 \end{enumerate}
\end{lem}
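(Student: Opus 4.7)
The plan is to reduce everything to a local algebra statement and then invoke two standard facts: the characterization of Gorenstein via invertibility of the dualizing module for finite flat extensions, and miracle flatness.

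First, I would reduce to the affine setting. Since $f$ is finite, it is affine, so the assertion that $f^!\cO_Y$ is a line bundle is local on $Y$. So it suffices to work with $Y=\Spec A$, $X=\Spec B$ and $B$ a finite $A$-algebra. Under this correspondence $f^!\cO_Y$ is the coherent sheaf on $X$ associated to the $B$-module $\omega_{B/A}:=\Hom_A(B,A)$, where $B$ acts via $(b\cdot\varphi)(m)=\varphi(bm)$. We need to check that $\omega_{B/A}$ is locally free of rank one over $B$, which can be verified at each local ring $B_{\fp}$.

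For part a), $B$ is flat, hence projective (finitely presented), over $A$. Localizing at a prime $\fp\subset B$ lying over $\fq\subset A$, the module $(\omega_{B/A})_{\fp}$ is identified with the relative dualizing module of the finite flat local homomorphism $A_{\fq}\to B_{\fp}$. The key commutative-algebra fact I would invoke is: if $A_{\fq}\to B_{\fp}$ is finite flat with $A_{\fq}$ Gorenstein, then $B_{\fp}$ is Gorenstein if and only if $\Hom_{A_{\fq}}(B_{\fp},A_{\fq})$ is free of rank one as a $B_{\fp}$-module (this is the standard characterization of Gorenstein via the dualizing module, see e.g.\ Bruns--Herzog). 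Both hypotheses are satisfied, so $(\omega_{B/A})_{\fp}\cong B_{\fp}$, and $f^!\cO_Y$ is a line bundle.

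For part b), the strategy is to reduce to part a) by proving flatness. Since $Y$ is smooth over $K$, its local rings are regular, and since $X$ is Gorenstein, in particular Cohen--Macaulay. Under the running assumption (implicit in the usage of this lemma) that the components of $X$ have the same dimension as their images in $Y$, the fibres of the finite morphism $f$ have the expected dimension zero. I would then invoke miracle flatness (Matsumura, \emph{Commutative Ring Theory}, Thm.~23.1): a morphism from a Cohen--Macaulay scheme to a regular scheme whose fibres have the expected dimension is flat. This reduces b) to a). The only genuine obstacle is making precise the equidimensionality hypothesis needed for miracle flatness; once this is in place, both parts follow purely formally from the two standard facts above.
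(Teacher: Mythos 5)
Your proof is essentially the same as the paper's. The paper also reduces b) to a) by observing that Gorenstein implies Cohen--Macaulay and hence (by what is implicitly miracle flatness) $f$ is flat, and then proves a) by identifying $(f^!\cO_Y)_x$ with the canonical module of $\cO_{X,x}$ via \cite[Satz~5.12]{kanonisch} and using the characterization of Gorenstein as ``the canonical module is free of rank one'' from \cite[Satz~5.9]{kanonisch} --- which is exactly the one-step Bruns--Herzog fact you cite. One point in your favor: you explicitly flag that miracle flatness needs $\dim\cO_{X,x}=\dim\cO_{Y,f(x)}$, i.e.\ an equidimensionality hypothesis on $f$, which the paper's one-line ``automatically flat'' glosses over. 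As stated, part b) is in fact false without it (e.g.\ a regular closed subscheme of positive codimension in a smooth variety is Gorenstein and finite over $Y$, yet $f^!\cO_Y=0$); the hypothesis is always satisfied where the lemma is invoked in the paper, since $f$ is there finite and surjective onto an irreducible $Y$ with $X$ equidimensional.
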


\begin{proof}
 We first note that $b)$ is a special case of part $a)$ because in this situation the morphism $f$ is automatically flat as Gorenstein implies Cohen-Macaulay. In order to prove part $a)$ note that for every $y \in Y$ the canonical module of $\cO_{Y,y}$ is $\cO_{Y,y}$ itself by \cite[Satz~5.9]{kanonisch}, and for every $x \in X$ the canonical module of $\cO_{X,x}$ is $(f^! \cO_Y)_x$ by \cite[Satz~5.12]{kanonisch}.
 Thus, again by \cite[Satz~5.9]{kanonisch}, $f^! \cO_Y$ is a line bundle on $X$ if $X$ is Gorenstein.
\end{proof}

\begin{rem}
 The preceding lemma implies that the sheaf $f^!\cO_Y$ is a line bundle whenever $f:X\to Y$ is a finite morphism of smooth varieties over a field $K$.
\end{rem}

\begin{Def}\label{def:codif}
 Let $A$ be a noetherian integral domain and $A\subset B$ be a finite ring extension such that for each minimal prime ideal $\fp$ of $B$ we have $\fp\cap A=(0)$. Let $K$ be the quotient field of $A$ and let $L$ be the total quotient ring of $B$. Then $L$ is a finite dimensional $K$-vector space and we have the $K$-linear map $\tr_{L/K}:L\to K$ that associates to every element $x\in L$ the trace of the $K$-linear map $L\to L,\, a\mapsto ax$.
 The \textit{codifferent} of the ring extension $A\subset B$ is the $B$-module
 $$\Delta(B/A)=\{g\in L:\, \tr_{L/K}(g\cdot B)\subset A\}.$$ Clearly, the map $$\Delta(B/A)\to\Hom_A(B,A),\, g\mapsto \tr_{L/K}(g\cdot -) $$ is a homomorphism of $B$-modules. 
 
	Now let $Y$ be an integral noetherian scheme and $f:X\to Y$ a finite morphism such the generic point of each irreducible component of $X$ is mapped to the generic point of $Y$. Let $\cK_X$ be the sheaf of total quotient rings of $\cO_X$.
 By glueing the above we define the quasi-coherent subsheaf $\Delta(X/Y)$ of $\cK_X$ and we obtain a morphism of $\cO_X$-modules $\Delta(X/Y)\to f^!\cO_Y$. We call $\Delta(X/Y)$ the \emph{codifferent sheaf}.
\end{Def}

\begin{ex}\label{ex:simplecodiff}
 Let $A$ be an integral domain and $K=\Quot(A)$. Let $f\in A[t]$ be a monic polynomial over $A$ which has only simple zeros in the algebraic closure of $K$ and let $B=A[t]/(f)$. Then the codifferent $\Delta(B/A)$ is the fractional ideal generated by $\frac{1}{f'}$ in the total quotient ring of $B$. Here $f'$ denotes the formal derivative of $f$. This follows from a lemma often attributed to Euler, see \cite[III, \S 6]{localfields}.
\end{ex}

\begin{rem}
 In order to construct a pairing $\cF_1\otimes\cF_2\to f^!\cO_Y$ it thus suffices by the discussion in \Cref{def:codif} to construct a pairing $\cF_1\otimes\cF_2\to\cK_X$ whose image is contained in $\Delta(X/Y)$.
\end{rem}

\begin{lem}\label{lem:codifshrieks}
 Let $X$ and $Y$ be varieties over a field of characteristic zero, $X$ equidimensional and $Y$ irreducible. Then for any finite surjective morphism $f:X\to Y$ the map $\Delta(X/Y)\to f^!\cO_Y$ is an isomorphism of $\cO_X$-modules.
\end{lem}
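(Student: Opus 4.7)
The statement is local on $Y$, so I reduce to the affine case $Y=\Spec(A)$ with $A$ a noetherian domain (using that $Y$ is irreducible and reduced as a variety) and $X=\Spec(B)$ with $B$ a finite $A$-algebra via $f^\#$. Let $K=\Quot(A)$ and let $L$ be the total quotient ring of $B$. Since $f$ is finite and surjective, $\dim X=\dim Y$, and since $X$ is equidimensional and $f$ is closed, every irreducible component of $X$ has image of full dimension inside the irreducible variety $Y$, hence dominates $Y$. Therefore the generic point of every component of $X$ lies over the generic point of $Y$, so $L\cong\prod_i L_i$ where each $L_i$ is a finite field extension of $K$. In particular $L=B\otimes_A K$, and $L$ is a finite-dimensional $K$-vector space.

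The first key point is that in characteristic zero, the trace form $L\otimes_K L\to K$, $(x,y)\mapsto\tr_{L/K}(xy)$, is nondegenerate. This reduces to the fact that each $L_i/K$ is separable (being a finite extension in characteristic zero), so that each trace form $\tr_{L_i/K}$ is nondegenerate on $L_i$, and the total trace decomposes compatibly with the product.

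With this in hand, both injectivity and surjectivity of the map
\[
 \Delta(B/A)\to \Hom_A(B,A),\qquad g\mapsto\bigl(b\mapsto\tr_{L/K}(g\cdot b)\bigr)
\]
follow easily. For injectivity, suppose $\tr_{L/K}(gb)=0$ for all $b\in B$. Since $L=B\otimes_A K$, every $x\in L$ has the form $b/a$ for some $b\in B$, $a\in A\setminus\{0\}$, so $\tr_{L/K}(gx)=0$ for all $x\in L$, and nondegeneracy gives $g=0$. For surjectivity, given an $A$-linear map $\varphi:B\to A$, tensoring with $K$ produces a $K$-linear map $\varphi_K:L\to K$. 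By nondegeneracy of the trace form, there is a unique $g\in L$ with $\varphi_K(x)=\tr_{L/K}(gx)$ for all $x\in L$; restricting to $B$ shows $\tr_{L/K}(gB)\subset A$, so $g\in\Delta(B/A)$ by definition, and it maps to $\varphi$.

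Since the affine local isomorphisms are induced by the natural trace construction, they glue to an isomorphism of quasi-coherent sheaves $\Delta(X/Y)\xrightarrow{\sim} f^!\cO_Y$. The only mild subtlety is keeping track of the total quotient ring when $X$ is reducible and verifying that each component dominates $Y$; once this is done, the entire argument rests on the separability (in characteristic zero) of the finite extensions $L_i/K$ and hence on the nondegeneracy of $\tr_{L/K}$.
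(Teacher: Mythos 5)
Your proposal is correct and follows essentially the same route as the paper: reduce to the affine case, identify $L$ with $B\otimes_A K$ as a product of finite (separable, since char $0$) extensions of $K$, and use nondegeneracy of the trace pairing. The only cosmetic differences are that you prove injectivity and surjectivity of $\Delta(B/A)\to\Hom_A(B,A)$ directly, whereas the paper notes $\Delta(B/A)=\psi^{-1}(\Hom_A(B,A))$ for $\psi:L\to\Hom_K(L,K)$ and shows $\psi$ is an isomorphism; and you invoke separability while the paper observes $\tr_{L_i/K}(1)=[L_i:K]\neq0$, two equivalent phrasings of the same characteristic-zero input.
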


\begin{proof}
 We can reduce to the affine case: Let $A\subset B$ be a finite extension of finitely generated $K$-algebras such that $A$ is an integral domain and $\fp\cap A=(0)$ for all minimal prime ideals $\fp$ of $B$. Let $E$ and $F$ be the total quotient rings of $A$ and $B$, respectively. Since $B\otimes_A E$ is finite dimensional as an $E$-vector space and reduced, it is a ring of the form $F_1\times\cdots\times F_r$ for some finite field extensions of $F_i$ of $E$. Since no element of $A$ is a zero divisor in $B$, we actually have that $F=F_1\times\cdots\times F_r$. Thus we have an injective map $\Hom_A(B,A)\to\Hom_E(F,E)$ that is given by tensoring with $E$. By definition, the preimage of $\Hom_A(B,A)$ under the map $$\psi:F\to\Hom_E(F,E),\, a\mapsto\tr_{F/E}(a\cdot -)$$is exactly $\Delta(B/A)$. It thus suffices to show that $\psi$ is an isomorphism because then the restriction of $\psi$ to $\Delta(B/A)$ is the desired isomorphism $\Delta(B/A)\to\Hom_A(B,A)$. The map $\psi$ is the direct sum of the maps $$\psi_i:F_i\to\Hom_E(F_i,E),\, a\mapsto\tr_{F_i/E}(a\cdot -)$$ and thus it suffices to show that each $\psi_i$ is an isomorphism. We have that $\tr_{F_i/E}(1)$ is the dimension of $F_i$ as an $E$-vector space. Since we work over a field of characteristic zero, this shows that each $\psi_i$ is a nonzero map. But since $\Hom_E(F_i,E)$ is one dimensional considered as a vector space over $F_i$, this implies that $\psi_i$ is an isomorphism.
\end{proof}

\begin{rem}
 \Cref{lem:codifshrieks} is no longer true over fields of positive characteristic because the trace $\tr_{L/K}$ is identically zero for inseparable field extensions $K\subset L$.
 This is one reason why we have not worked with the codifferent sheaf to begin with.
\end{rem}

\begin{prop}\label{prop:ramdiv}
	Let $f:X\to Y$ be a finite surjective morphism of varieties over a field of characteristic zero. Let $X$ be equidimensional and Gorenstein and let $Y$ be smooth and irreducible. Then $\Delta(X/Y)$ is an invertible subsheaf of $\cK_X$ and thus $\Delta(X/Y)=\cL(R)$ for some Cartier divisor $R$ on $X$. This Cartier divisor is effective and its support consists exactly of those points where $f$ is ramified.
\end{prop}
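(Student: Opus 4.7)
The claim has three parts, which I will address in order.

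For invertibility, I combine the two preceding lemmas. By \Cref{lem:shriekinv}(b), $f^!\cO_Y$ is a line bundle on $X$, since $X$ is Gorenstein and $Y$ is smooth; by \Cref{lem:codifshrieks}, the natural map $\Delta(X/Y)\to f^!\cO_Y$ is an isomorphism in characteristic zero. Hence $\Delta(X/Y)$ is an invertible subsheaf of $\cK_X$, and so it is of the form $\cL(R)$ for a unique Cartier divisor $R$ on $X$.

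For effectivity, I show $\cO_X\subset\Delta(X/Y)$, which by our convention forces $R\geq 0$. Since $Y$ is smooth and hence normal, on any affine chart the corresponding ring extension $A\subset B$ has $A$ integrally closed in $K=\operatorname{Frac}(A)$. Every $b\in B$ is integral over $A$, so all of its Galois conjugates in an algebraic closure of $K$ are integral over $A$, and therefore their sum $\tr_{L/K}(b)$ lies in $A$. This gives $1\in\Delta(B/A)$.

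The identification of the support of $R$ with the ramification locus of $f$ is the main obstacle and requires a local computation. Fix $x\in X$ with $y=f(x)$, and pass to $A=\cO_{Y,y}$ and $B=(f_*\cO_X)_y$. Since $X$ is Cohen--Macaulay (being Gorenstein) and $Y$ is regular, miracle flatness makes $f$ flat, so $B$ is free of rank $n=\deg(f)$ over $A$. The inclusion $\cO_X\hookrightarrow\Delta(X/Y)$ becomes the trace map $B\to\Hom_A(B,A)$ sending $b$ to $\tr_{L/K}(b\cdot-)$, and the point $x$ lies outside $\Supp(R)$ exactly when this map is an isomorphism at $x$, i.e., when the trace pairing on $B$ is perfect there.

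To connect perfection of the trace pairing at $x$ with unramifiedness of $f$ at $x$, I would split $B$ into its local factors at the points of $f^{-1}(y)$ (after passing to a henselization of $A$, where a finite semi-local ring is a product of local ones) and focus on the factor at $x$. Because we work in characteristic zero, the residue field extension $\kappa(x)/\kappa(y)$ is automatically separable, so a primitive element lifts via Nakayama to give an isomorphism $B\cong A[t]/(\phi(t))$ with $\phi$ monic of degree equal to the local rank. By \Cref{ex:simplecodiff}, $\Delta(B/A)=(1/\phi'(t))B$, so locally $R$ is the vanishing divisor of $\phi'(t)$. Since also $\Omega_{B/A}=B/(\phi'(t))$, this identifies $\Supp(R)$ with $\Supp\Omega_{X/Y}$, which is by definition the ramification locus of $f$.
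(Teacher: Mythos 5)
Your first two steps are sound. The invertibility argument is exactly the one in the paper (combine Lemma~\ref{lem:shriekinv}(b) with Lemma~\ref{lem:codifshrieks}). For effectivity, you take a slightly different route than the paper: you argue via normality of $Y$, noting that the trace of an integral element is integral over $A$ and lies in $K$, hence in $A$, whereas the paper argues via flatness that $B$ is locally free over $A$ so the trace of multiplication by $b$ lands in $A$. Both arguments are correct and standard.

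The third step, however, has a genuine gap. You claim that after passing to the henselization, separability of $\kappa(x)/\kappa(y)$ lets you lift a primitive element $\bar t$ to $t\in B$ so that Nakayama yields $B\cong A[t]/(\phi(t))$ with $\deg\phi$ equal to the local rank. This is false: Nakayama requires that $t$ generate the \emph{fiber ring} $B/\fm_A B$ as a $\kappa(y)$-algebra, and there is no reason the fiber ring should be generated by a lift of a primitive element of the residue field, or indeed by any single element. A concrete counterexample in characteristic zero: $A=k[[s,t]]$, $B=k[[x,y]]$ with $s=x^2$, $t=y^2$. Here $f$ is finite flat of degree $4$, $X$ and $Y$ are smooth, $\kappa(x)=\kappa(y)=k$, yet $B/\fm_A B = k[x,y]/(x^2,y^2)$ is a $4$-dimensional Gorenstein Artinian $k$-algebra that needs two generators, so $B$ is not monogenic over $A$. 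Thus the reduction to \Cref{ex:simplecodiff} breaks down precisely at ramified points, which is exactly where you need it. The assertion you are after — that the zero scheme of the trace section of $f^!\cO_Y$ is the ramification locus — is true, but it needs a monogenicity-free argument (e.g., that the trace pairing on the fiber $\kappa(\fp)\otimes_A B$ is nondegenerate if and only if this finite $\kappa(\fp)$-algebra is étale); the paper simply cites \cite[Tag~0BW9]{stacks-project} or \cite[Rem.~2.2.19]{KummerDiss} for this fact.
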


\begin{proof}
 By \Cref{lem:codifshrieks} $\Delta(X/Y)$ is isomorphic to $f^!\cO_Y$ which is an invertible sheaf by \Cref{lem:shriekinv}. Thus $\Delta(X/Y)$ is an invertible subsheaf of $\cK_X$ and we can write $\Delta(X/Y)=\cL(R)$ for some Cartier divisor $R$ on $X$. We first show that $R$ is effective which is equivalent to the constant $1$ being a global section of $\Delta(X/Y)$. This can be checked locally.
 We thus consider a finite ring extension $A\subset B$ where $A$ is an integral domain. Furthermore, this ring extension is flat by the assumptions on $X$ and $Y$. Thus without loss of generality we can assume that $B$ is free as $A$-module. Therefore, the $A$-linear map $B\to B$ given by multiplication with an element $b\in B$ can be represented by a matrix having entries in $A$. Using the notation of \Cref{def:codif} this shows that $\tr_{L/K}(1\cdot B)\subset A$. Thus  the constant $1$ is a global section of $\Delta(X/Y)$ and $R$ is effective. The image of $1$ under the map $H^0(X,\Delta(X/Y))\to H^0(X,f^!\cO_Y)$ is just the trace map and the subscheme associated to $R$ is the zero set of this section. This consists exactly of the ramification points of $f$, see for example \cite[Tag~0BW9]{stacks-project} or \cite[Rem.~2.2.19]{KummerDiss}.
\end{proof}

\begin{Def}
 In the situation of \Cref{prop:ramdiv} we call the Cartier divisor $R$ on $X$ that corresponds to the invertible subsheaf $\Delta(X/Y)$ of $\cK_X$ the \emph{ramification divisor} of $f$.
\end{Def}

\begin{lem}\label{lem:codiffextension}
  Let $f:X\to Y$ be a finite surjective morphism of varieties. Let $X$ be equidimensional and let $Y$ be smooth and irreducible. Let $Z\subset X$ be of codimension at least two. Consider the open subset $V=X\setminus Z$ and its inclusion $\iota\colon V\to X$ to $X$. Then $\iota_*(\Delta|_V)=\Delta$ where $\Delta=\Delta(X/Y)$.
\end{lem}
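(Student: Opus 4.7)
The plan is to transport the problem to the smooth variety $Y$ via the pushforward $f_*$, where the desired equality becomes a standard Hartogs-type extension statement for reflexive sheaves. The two key ingredients are the identification of $f_*\Delta$ with a dual sheaf and the fact that finite morphisms are affine.

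First, I would combine \Cref{lem:codifshrieks} and \Cref{lem:grothbasic} to identify $f_*\Delta$ with the dual sheaf $\shHom_{\cO_Y}(f_*\cO_X,\cO_Y)$. As the $\cO_Y$-dual of a coherent sheaf on the smooth (hence normal) variety $Y$, this pushforward is reflexive, so it satisfies Serre's condition $S_2$: for any closed subset $T\subset Y$ of codimension $\geq 2$ and any open $U'\subset Y$, the restriction $(f_*\Delta)(U')\to (f_*\Delta)(U'\setminus T)$ is an isomorphism.

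Next, I would apply this with $T = f(Z)$. Since $f$ is finite and surjective and $\dim X=\dim Y$, one has $\dim f(Z)\leq \dim Z\leq \dim X - 2$, so $f(Z)\subset Y$ is closed of codimension $\geq 2$. Writing $W := Y\setminus f(Z)$, I would observe that $f^{-1}(W) \subset V$, and for every open $U'\subset Y$ there is a chain of restriction maps
\[
\Delta(f^{-1}U')\hookrightarrow \Delta(f^{-1}U'\cap V)\hookrightarrow \Delta(f^{-1}(U'\cap W)).
\]
Each restriction is injective because $\Delta$ is a torsion-free subsheaf of $\cK_X$ and each of the three opens contains all generic points of $f^{-1}U'$ (using that the codimensions of $Z$ in $X$ and of $f(Z)$ in $Y$ are $\geq 2$). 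The composition, however, agrees with the restriction on the outermost terms, which is an isomorphism by the $S_2$ property of $f_*\Delta$; hence both intermediate inclusions are isomorphisms. In particular $\Delta(f^{-1}U')=\Delta(f^{-1}U'\cap V)$ for every open $U'\subset Y$, which exactly expresses the equality $f_*\Delta = f_*\iota_*(\Delta|_V)$ of quasi-coherent sheaves on $Y$.

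Finally, since $f$ is finite and therefore affine, the pushforward $f_*$ is fully faithful on quasi-coherent $\cO_X$-modules, so this equality lifts to the desired isomorphism $\Delta \cong \iota_*(\Delta|_V)$ on $X$. The main subtlety I anticipate is the codimension bookkeeping in the middle step, specifically verifying that $f(Z)$ has codimension $\geq 2$ in $Y$ and correctly arranging the chain of three opens $f^{-1}(U'\cap W)\subset f^{-1}U'\cap V\subset f^{-1}U'$ so that the $S_2$ property on the smooth base $Y$ propagates into the extension statement on the possibly singular $X$; a secondary subtlety is that working directly on $X$ would seem to require $\Delta$ to be $S_2$ there, which is not transparent without first passing through $Y$ where reflexivity is automatic.
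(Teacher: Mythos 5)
Your argument is correct, but it takes a genuinely different and more machinery-heavy route than the paper's, and it quietly assumes slightly more than the lemma states. The paper's proof is a direct computation: after observing that one may enlarge $Z$ to $f^{-1}(Z')$ with $Z'=f(Z)\subset Y$ closed of codimension at least two, it evaluates $\cD=\iota_*(\Delta|_V)$ on a basic affine open $W=f^{-1}(U)$ using the explicit description of the codifferent, and notes that removing $Z$ can only gain sections through the target $\cO_Y(U)$ of the trace condition; since $Y$ is normal and $Z'$ has codimension at least two, Hartogs gives $\cO_Y(U)=\cO_Y(U\setminus Z')$, hence $\cD(W)\subset\Delta(W)$ and equality follows. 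Your version instead identifies $f_*\Delta$ with the $\cO_Y$-dual $\shHom_{\cO_Y}(f_*\cO_X,\cO_Y)$ via \Cref{lem:codifshrieks} and \Cref{lem:grothbasic}, and then leans on the general fact that duals of coherent sheaves on a normal variety are reflexive and hence $S_2$. The chain-of-opens bookkeeping you use to propagate this extension property from $Y$ back up to $X$ (including the codimension check on $f(Z)$ and the final descent along the affine morphism $f$) is correct.

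What the reflexivity route buys is a clean conceptual slogan; what it costs is the hypothesis of \Cref{lem:codifshrieks}, which is stated only in characteristic zero. The present lemma carries no characteristic assumption, and the paper's direct argument genuinely works over any field; your reduction through $f^!\cO_Y$ does not, since $\Delta(X/Y)$ and $f^!\cO_Y$ can fail to coincide when the trace form degenerates (cf.\ the remark following \Cref{lem:codifshrieks}). In that degenerate case $\Delta=\cK_X$ and the lemma is trivially true, but that is a separate observation your proof does not make. Since the paper only invokes this lemma over $\R$, the restriction is harmless in context, but as written your proof establishes a formally weaker statement than the lemma.
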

\begin{proof}
If we enlarge $Z$, then the statement becomes stronger, so we may assume that $Z=\pi^{-1}(Z')$ for some $Z'\subset Y$ of codimension at least two. We write $\cD=\iota_*(\Delta|_V)$. Then $\Delta$ is a subsheaf of $\cD$ and we need to show equality. To that end let $U\subset Y$ be an affine open subset and $W=\pi^{-1}(U)$. Then we have the following:
\begin{enumerate}
  \item $\cO_X(W)\subset \cO_X(W\setminus Z)$ and
  \item $\cO_{Y}(U)=\cO_{Y}(U\setminus Z')$.
\end{enumerate}
Letting $L$ be the total quotient ring of $X$ and $K$ the function field of $Y$, we get
\[
\cD(W)=\Delta(\pi^{-1}(U\setminus Z'))=\{a\in L \mid \tr_{L/K}(a\cO_{X}(W\setminus Z))\subset \cO_{Y}(U\setminus Z')\,\}
\]
and due to (1) and (2) the latter is contained in
\[
\{\,a\in L \mid \tr_{L/K}(a\cO_{X}(W))\subset \cO_{Y}(U)\,\} = \Delta(W).
\]
Thus we have $\cD(W)=\Delta(W)$. Since $f\colon X\to Y$ is affine, the sets $W=\pi^{-1}(U)$ for $U\subset Y$ open and affine give an affine covering of $X$ and thus $\cD=\Delta$.
\end{proof}

\section{Real fibered morphisms}\label{sec:rf}
In this section we recall the notion of real fibered morphisms, basic examples and some of their properties.

\begin{Def}
 Let $f:X\to Y$ be a morphism of varieties over $\R$. If $f^{-1}(Y(\R))=X(\R)$, then we say that $f$ is real fibered.
\end{Def}

\begin{ex}\label{ex:nonneg}
 Let $p\in\R[x_0,\ldots,x_n]_{2d}$ be a homogeneous polynomial of degree $2d$. Inside the weighted projective space $\pp(d,1,\ldots,1)$ we consider the hypersurface $X$ defined by $y^2=p(x_0,\ldots,x_n)$ and the natural projection $\pi: X\to\pp^n$ onto the $x$-coordinates. This is a double cover of $\pp^n$ ramified at the hypersurface defined by $p=0$. Clearly $\pi$ is real fibered if and only if $p$ is globally nonnegative, i.e., $p(x)\geq0$ for all $x\in\R^{n+1}$.
\end{ex}

Hyperbolic polynomials yield another class of examples.

\begin{Def}
	Let $h\in\R[x_0,\ldots,x_n]_d$ be a homogeneous polynomial of degree $d$ and let $e\in\R^{n+1}$. We say that $h$ is \emph{hyperbolic with respect to $e$} if the univariate polynomial $h(te-v)\in\R[t]$ has only real zeros for all $v\in\R^{n+1}$. Note that this implies $h(e)\neq0$.
 A hypersurface $X\subset\pp^n$ is called \emph{hyperbolic with respect to $[e]$} if its defining polynomial is hyperbolic with respect to $e$.
\end{Def}

\begin{figure}[h]
  \begin{center}
    \includegraphics[width=4cm]{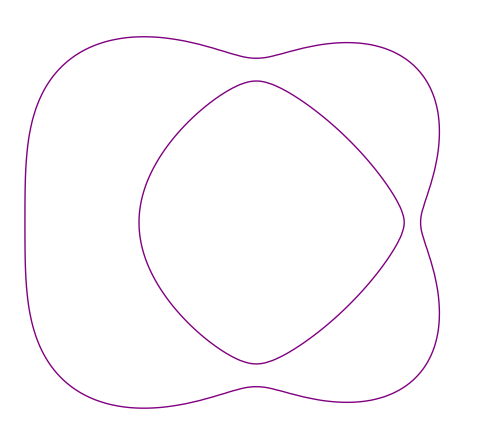}
  \end{center}
  \caption{A plane quartic curve that is hyperbolic with respect to any point in the inner oval.}
  \label{fig:hyp}
\end{figure}
  
\begin{ex}
 Let $X\subset\pp^n$ be a hypersurface and $e\in\pp^n$ a point that does not lie on $X$. Then the linear projection $\pi_e: X\to\pp^{n-1}$ with center $e$ is real fibered if and only if $X$ is hyperbolic with respect to $e$.
\end{ex}

One can generalize this notion naturally to varieties of higher codimension.

\begin{Def}
 Let $X\subset\pp^n$ be a variety of pure dimension $d$ and $E\subset\pp^n$ a linear subspace of codimension $d+1$ which does not intersect $X$. We say that $X$ is \emph{hyperbolic with respect to $E$} if the linear projection $\pi_E: X\to\pp^{d}$ with center $E$ is real fibered.
\end{Def}

An important feature of real fibered morphisms is the following.

\begin{thm}[Thm.~2.19 in \cite{eliich}]\label{thm:unramified}
 Let $f:X\to Y$ be a real fibered morphism of smooth varieties over $\R$. Then $f$ is unramified over $X(\R)$.
\end{thm}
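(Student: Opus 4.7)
We argue by contradiction: suppose $f$ is ramified at some real point $x\in X(\R)$, and set $y=f(x)\in Y(\R)$. The goal is to produce a real point $y'\in Y(\R)$ arbitrarily close to $y$ admitting a non-real preimage under $f$, thereby contradicting the real fibered hypothesis. The strategy is first to reduce to the case of a smooth real curve via a Bertini-type slicing at $x$, then to invoke Weierstrass preparation to bring the restricted map into the normal form $t=u(s)\cdot s^k$ with $k\geq 2$, and finally to exploit that such a power map has non-real preimages of nearby nonzero real values.

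For the reduction to curves, note that since $x$ is real and $f$ is ramified at $x$, the kernel of the differential $df_x:T_xX\to T_yY$ is a nonzero real subspace of $T_xX$. Choose a nonzero real vector $v\in\ker(df_x)$ and intersect $X$ with $\dim X-1$ general real hyperplane sections through $x$ whose common tangent at $x$ equals $\R\cdot v$; this yields a smooth real curve $C\subset X$ through $x$ with $T_xC=\R\cdot v$. Because the fibers of the (finite) morphism $f$ are zero-dimensional, $C$ is not contained in $f^{-1}(y)$, so the restriction $f|_C:C\to Y$ is non-constant with image a curve $C'\ni y$; passing to the normalizations at $x$ and $y$ produces a finite morphism $\tilde f:\tilde C\to\tilde C'$ of smooth real curves, still ramified at a real point lying above a real point, and still real fibered because $f$ is. In real-analytic local coordinates $s$ at $x\in\tilde C$ and $t$ at $y\in\tilde C'$, both defined over $\R$, Weierstrass preparation puts $\tilde f$ in the form $t=u(s)\cdot s^k$ for a real-analytic unit $u$ with $u(0)\neq 0$ and an integer $k\geq 2$.

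For a real value $t_0\neq 0$ with $|t_0|$ sufficiently small, the equation $u(s)\cdot s^k=t_0$ has exactly $k$ distinct convergent complex solutions $s$ in a neighborhood of $0$, clustering near the $k$-th roots of $t_0/u(0)$ as $t_0\to 0$. Since at most two $k$-th roots of a nonzero real number are themselves real, for $k\geq 2$ at least one of these $k$ solutions is a non-real complex number $s_0$. This solution corresponds to a non-real point of $\tilde C(\C)$, hence of $X(\C)$, mapping under $f$ to the real point of $Y$ corresponding to $t_0$, contradicting real fiberedness. The main technical obstacle is the slicing step: one must verify, by a Bertini-type argument over $\R$, that a real hyperplane section through $x$ with a prescribed real tangent direction exists and cuts out a smooth real curve on which $f$ remains ramified at $x$; a secondary concern is to justify that the convergent power-series solutions of $u(s)s^k=t_0$ correspond to genuine $\C$-points of the algebraic variety $X$, which follows from the equivalence between the algebraic, formal, and convergent-analytic local models for a finite morphism of smooth real varieties.
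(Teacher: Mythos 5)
The paper does not actually prove \Cref{thm:unramified}; it is quoted from \cite{eliich}, so there is no in-text proof to compare against. Judged on its own, your argument has a genuine gap in the reduction-to-curves step.

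You choose a smooth real curve $C\subset X$ through $x$ with $T_xC=\R v\subset\ker df_x$, obtaining $d(f|_C)_x=0$, and then pass to the normalization $\tilde C'$ of the image curve $C'=f(C)$, claiming that the induced map $\tilde f\colon C\to\tilde C'$ is \emph{still ramified} at $x$. That claim is not justified and is where the argument can break. Since $d(f|_C)_x=0$, the image $C'$ is forced to be singular (cusp-like) at $y$, and the birational map $\nu\colon\tilde C'\to C'\hookrightarrow Y$ may itself have vanishing differential at the point $\tilde y$ above $y$. In that case the factorization $d(f|_C)_x=d\nu_{\tilde y}\circ d\tilde f_x=0$ places no constraint on $d\tilde f_x$. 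Concretely, if the slice happens to map birationally onto a cuspidal image — locally of the shape $s\mapsto(s^2,s^3,\dots)$ — then $\tilde f$ is a local isomorphism and your curve-level Weierstrass/root-counting argument produces no contradiction: the germ $s\mapsto(s^2,s^3)$ has \emph{no} non-real preimages of real points. (In such examples the ambient $f$ turns out not to be real fibered, but that is exactly the content one would need to prove: that real-fiberedness of $f$ forces $\tilde f$ to be ramified of degree $\ge 2$, or forces some other non-real preimage. Your proof uses real-fiberedness only \emph{after} this step, in the $k$-th root count, so nothing in the argument as written rules out the problematic case.) Restricting attention to preimages lying on the one-dimensional slice $C$ also discards the remaining local preimages of $y'$ in $X\setminus C$, which is additional information one plausibly needs.

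A secondary issue: after normalizing the image curve you assert that $\tilde f$ remains real fibered ``because $f$ is.'' This needs an argument — one must check that non-real points of $\tilde C'$ are not identified with real points of $C'$ under $\nu$ — though this part is repairable (it follows from $\nu$ being generically injective and defined over $\R$). The main point, however, stands: the assertion that the normalized curve map is still ramified is the crux of the reduction and is left unproved; one cannot dismiss it as a routine Bertini technicality, since a different kind of argument (tracking all local branches of $f^{-1}$ over a suitable arc in $Y(\R)$, or a trace-form/local-degree argument that does not discard preimages off the slice) is required to close it.
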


The following property of real fibered morphisms will come in handy later when we want to construct positive semidefinite bilinear forms.

\begin{prop}\label{lem:tracelem}
 Let $Y$ be an irreducible smooth variety and let $X$ be an equidimensional variety over $\R$.
 Let $f:X\to Y$ be a finite surjective real fibered morphism. Let $K$ and $L$ be the total quotient rings of $Y$ and $X$ and $\tr_{L/K}:L\to K$ the trace map. If $g\in L$ is nonnegative on $X(\R)$ (whereever it is defined), then $\tr_{L/K}(g)$ is nonnegative on $Y(\R)$ (whereever it is defined).
\end{prop}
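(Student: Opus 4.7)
The strategy is to reduce to a dense Zariski open locus where $f$ is étale and to evaluate the trace there as a sum of values of $g$ at the real preimages. I would first choose a dense Zariski open subset $U\subset Y$ with three properties: (a) $f^{-1}(U)$ is contained in the smooth locus $X^{\mathrm{sm}}$ of $X$, which is possible because $X$ is equidimensional, so $X\setminus X^{\mathrm{sm}}$ has dimension less than $\dim X=\dim Y$, and $f$ is finite, hence its image is a proper closed subset; (b) $f|_{f^{-1}(U)}\colon f^{-1}(U)\to U$ is étale, by generic étaleness in characteristic zero together with \Cref{thm:unramified}; (c) $g$ is regular on $f^{-1}(U)$, achieved by further removing from $Y$ the image under $f$ of the polar locus of $g\in L$, which is closed of dimension less than $\dim Y$.

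At any closed point $y\in U(\R)$, the real fiberedness of $f$ guarantees that $f^{-1}(y)$ consists of $n:=\deg(f)$ distinct real points $x_1,\dots,x_n\in X(\R)$. Étaleness of $f$ at each $x_i$, combined with Hensel's lemma applied over the completion $\widehat{\cO}_{Y,y}$, yields the decomposition
\[
(f_*\cO_X)_y\otimes_{\cO_{Y,y}}\kappa(y)\;\cong\;\prod_{i=1}^n\kappa(x_i)\;\cong\;\R^n,
\]
under which the multiplication-by-$g$ operator becomes componentwise multiplication by the tuple $(g(x_1),\dots,g(x_n))$. Taking traces gives the explicit formula
\[
\tr_{L/K}(g)(y)\;=\;\sum_{i=1}^{n}g(x_i),
\]
which is nonnegative by hypothesis, since each $x_i\in X(\R)$ lies in the regular locus of $g$ where $g(x_i)\ge 0$.

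For an arbitrary point $y_0\in Y(\R)$ at which $\tr_{L/K}(g)$ is regular, I would conclude by continuity and density. Since $Y$ is smooth, its real locus $Y(\R)$ is a real analytic manifold (or empty) of pure real dimension $\dim Y$, while $(Y\setminus U)(\R)$ has strictly smaller real dimension; hence $U(\R)$ is Euclidean-dense in $Y(\R)$ near $y_0$. Continuity of $\tr_{L/K}(g)$ on its domain of regularity, together with closedness of the condition $\tr_{L/K}(g)\ge 0$, then propagates nonnegativity from $U(\R)$ to $y_0$.

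The step I expect to require the most care is the trace formula $\tr_{L/K}(g)(y)=\sum_i g(x_i)$ at étale fibers: one must identify the algebraically defined trace on the total quotient ring $L$ with the fiber-wise sum, and this hinges on the Hensel-style splitting of $(f_*\cO_X)_y$ over $\widehat{\cO}_{Y,y}$ into a product of local factors indexed by the preimages. A secondary subtlety is the passage from Zariski density of $U$ to Euclidean density of $U(\R)$ in $Y(\R)$, which really does use smoothness of $Y$; without it one would have to handle isolated or lower-dimensional real components of $Y(\R)$ separately (and in fact the statement is vacuous on any component disjoint from $U(\R)$).
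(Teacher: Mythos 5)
Your proof is correct, but it takes a genuinely different route for the local step. The paper does not restrict to the \'etale locus. Instead, it invokes generic freeness to find an affine open $U\subset Y$ over which $B=\cO_X(f^{-1}(U))$ is a free $A$-module with $g\in B$, and then, for each real maximal ideal $\fm\subset A$, considers the (possibly non-reduced) Artinian $\R$-algebra $C=B/\fm B$, whose spectrum consists only of real points by real fiberedness. The key tool is a theorem of Pedersen, Roy, and Szpirglas: since $\bar g\in C$ is nonnegative on $\Spec(C)$, the trace form $(a,b)\mapsto\tr_{C/\R}(\bar g\,a\,b)$ is positive semidefinite, so in particular $\tr_{C/\R}(\bar g)\ge 0$, and flatness identifies this with the evaluation of $\tr_{L/K}(g)$ at the real point. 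You instead shrink further to an \'etale locus (this uses generic \'etaleness in characteristic zero --- you cite \Cref{thm:unramified}, though generic \'etaleness alone already suffices here), so that every real fiber is a set of $\deg(f)$ distinct real points and the trace becomes the elementary sum $\sum_i g(x_i)$. Both proofs finish identically by Euclidean density of $U(\R)$ in $Y(\R)$, which the paper obtains from an Artin--Lang-type lemma \cite[Lem.~1.5]{Bec82} using smoothness of $Y$, exactly the point you flag as the secondary subtlety. What the paper's route buys is that the local positivity argument works uniformly over possibly non-reduced fibers and needs no Henselian splitting; what your route buys is that the local step is completely elementary (a nonnegative sum of nonnegative reals) at the cost of shrinking the open set more aggressively. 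Both are valid; your write-up would profit from a citation for the curve-selection or Artin--Lang step rather than the informal ``smaller real dimension'' argument.
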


\begin{proof}
Assume that $g\in L$ is nonnegative on $X(\R)$ (whereever it is defined). By generic freeness \cite[Lem.~6.9.2]{EGAIV2} there is a nonempty open affine subset $U\subset Y$ such that $B=\cO_X(f^{-1}(U))$ is a free $A$-module where $A=\cO_Y(U)$ and such that $g\in B$. By a version of the Artin--Lang theorem \cite[Lem.~1.5]{Bec82} the real points of $U$ are dense in $Y(\R)$ with respect to the euclidean topology because $Y$ is smooth. Thus it suffices to show that $\tr_{L/K}(g)\in A$ is nonnegative on every real point $p$ of $U$. Let $\fm\subset A$ be the corresponding maximal ideal. Then $C=B/\fm B$ is finite dimensional as vector space over $\R=A/\fm$ and $\Spec(C)$ consists only of real points because $f$ is real fibered. Thus letting $\overline{g}\in C$ be the residue class of $g$ and  because $g$ is nonnegative on $f^{-1}(p)$, the quadratic form $$C\times C\to \R,\, (a,b)\mapsto\tr_{C/\R}(\overline{g}\cdot a\cdot b)$$ is positive semidefinite by \cite[Thm.~2.1]{pedersen}. Thus in particular $\tr_{C/\R}(\overline{g})\geq0$. Finally, by flatness, we have that $\overline{\tr_{L/K}(g)}=\tr_{C/\R}(\overline{g})\geq0$.
\end{proof}

\section{Symmetric and Hermitian bilinear forms}\label{sec:herm}

\begin{Def}
  Let $X$ be a scheme and let $\cF_1,\cF_2$ and $\cG$ be coherent sheaves on $X$. Let $\alpha:\cF_1\to\cF_2$ and $\beta:\cG\to\cG$ be isomorphisms of sheaves of abelian groups. 
  A $\cG$-{valued pairing} $\varphi: \cF_1 \otimes \cF_2 \to \cG$ is \emph{symmetric with respect to $\alpha$ and $\beta$} if we have for all local sections $s_i$ of $\cF_i$ that $$\varphi(s_1\otimes s_2)=\beta(\varphi(\alpha^{-1}(s_2)\otimes\alpha(s_1))).$$
\end{Def}

\begin{ex}
	Let $\cF=\cF_1=\cF_2$. Then $\varphi: \cF_1 \otimes \cF_2 \to \cG$ is \emph{(skew-)symmetric} if $\alpha:\cF\to\cF$ is the identity $\textrm{id}_\cF$ and $\beta:\cG\to\cG$ is $\textrm{id}_\cG$ ($-\textrm{id}_\cG$).
\end{ex}

In order to define hermitian bilinear forms  we have to set up some notation.
Let $A$ be an $\R$-algebra and $N'$ an $A$-module. Further consider $B=A\otimes_\R \C$ and the $B$-module $N=N'\otimes_A B=N'\otimes_\R \C$.
The complex conjugation on $\C$ induces an automorphism $\sigma_N: N\to N$ of $A$-modules whose fixed elements are exactly the elements of $N'$. 
If we have any $B$-module $M$, we can define another $B$-module $\overline{M}$. The elements of $\overline{M}$ are in bijection to those of $M$ and denoted by $\overline{x}$ for $x\in M$. The scalar multiplication is defined by letting $b\cdot \overline{x}:=\overline{\sigma_B(b)\cdot x}$. This implies that the map $\tau_M:M\to \overline{M},\,x\mapsto\overline{x}$ is an isomorphism of $A$-modules and we have $M=\overline{\overline{M}}$.

These definitions carry over to the case of schemes by glueing. If $X$ is a scheme over $\R$ we denote by $X_\C=X\times_{\Spec(\R)} \Spec(\C)$ the base change to $\C$ and $\pi:X_\C\to X$ the natural projection. If $\cG=\pi^*\cG'$ for some quasi-coherent sheaf $\cG'$ on $X$, we have an isomorphism $\sigma_\cG: \cG\to\overline{\cG}$ of sheaves of abelian groups as above.
For any quasi-coherent sheaf $\cF$ on $X_\C$ we obtain the quasi-coherent sheaf $\overline{\cF}$ together with an isomorphism of sheaves of abelian groups $\tau_\cF:\cF\to\overline{\cF}$. We say that a $\cG$-valued pairing $\cF\otimes\overline{\cF}\to\cG$ is \emph{hermitian} if it is symmetric with respect to $\tau_\cF$ and $\sigma_\cG$.

\begin{rem}
 Let $X$ be a scheme over $\R$. The complex conjugation induces an automorphism $\sigma:\cK_{X_\C}\to\cK_{X_\C}$ on the sheaf of total quotient rings of $X_\C$. If $\cF$ is a subsheaf of $\cK_{X_\C}$, then we can identify $\overline{\cF}$ with the image of $\cF$ under $\sigma$.
\end{rem}

\section{Positive semidefinite bilinear forms}\label{sec:psd}
The criterion for a coherent sheaf being Ulrich that we have seen in \Cref{sec:ulr} very much relies on the induced bilinear form on global sections being nondegenerate. Verifying this condition might be hard in general. However, in this section we show that when working over the real numbers we have a more convenient criterion at hand, namely positivity.
In this section $X$ will always be a variety over $\R$.

\begin{Def}\label{def:psd}
 Let $\cF$ be a coherent sheaf on $X$. Consider a symmetric bilinear form $\varphi: \cF\otimes_{\cO_X} \cF \to \cO_X$. Let $\alpha\in X(\R)$, i.e., a morphism $\alpha: \Spec(\R) \to X$. Then we get a symmetric bilinear form $\alpha^* \varphi$ on the pullback $\alpha^* \cF$ which is just a finite dimensional $\R$-vector space. We say that $\varphi$ is \textit{positive semidefinite at $\alpha\in X(\R)$} if $\alpha^* \varphi$ is positive semidefinite. We say that $\varphi$ is \textit{positive semidefinite} if it is positive semidefinite at every $\alpha\in X(\R)$.
 
 Analogously, let $\cF$ be a coherent sheaf on $X_\C$ and consider a hermitian bilinear form $\varphi: \cF\otimes_{\cO_X} \overline{\cF} \to \cO_X$. We can consider any $\alpha\in X(\R)$ also as a point of $X_\C$ that is fixed by the involution. Like this we obtain a hermitian bilinear form $\alpha^* \varphi$ on the pullback $\alpha^* \cF$ which is just a finite dimensional $\C$-vector space. We say that $\varphi$ is \textit{positive semidefinite at $\alpha\in X(\R)$} if $\alpha^* \varphi$ is positive semidefinite. We say that $\varphi$ is \textit{positive semidefinite} if it is positive semidefinite at every $\alpha\in X(\R)$.
\end{Def}

A symmetric or hermitian bilinear form on a coherent sheaf induces a symmetric or hermitian bilinear form on the space of global sections. The next lemma shows how this behaves with respect to positivity.

\begin{lem}\label{lem:globalpd}
 Let $X$ be irreducible and proper over $\R$ with generic point $\xi$.
 Let $\varphi: \cF\otimes_{\cO_X} \cF \to \cO_X$ be a positive semidefinite symmetric bilinear form on the coherent sheaf $\cF$ and $V=H^0(X,\cF)$.
 \begin{enumerate}[a)]
  \item  If $X(\R)\neq\emptyset$, then the induced bilinear form $V \times V\to \R$ positive semidefinite.
  \item If $X(\R)$ is Zariski dense in $X$, $\varphi$ is nondegenerate at $\xi$ and $\cF$ is torsion-free, then the induced bilinear form $V \times V\to \R$ is positive definite and thus in particular nondegenerate.
 \end{enumerate}
 The corresponding statements for hermitian bilinear forms hold true as well.
\end{lem}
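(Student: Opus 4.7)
The plan is to reduce the target of the induced pairing to $\R$ using $X(\R)\neq\emptyset$, then obtain part~a) by pointwise evaluation at real points, and finally prove part~b) by showing that a global section $s$ with $\varphi(s\otimes s)=0$ induces the zero morphism $\cF\to\cO_X$ via $t\mapsto\varphi(s\otimes t)$; combined with nondegeneracy at $\xi$ and torsion-freeness, this will force $s=0$.

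First I would observe that since $X$ is irreducible, reduced, and proper over $\R$, the $\R$-algebra $H^0(X,\cO_X)$ is a finite integral $\R$-algebra, hence a field extension of $\R$. Evaluation at any real point yields a ring homomorphism $H^0(X,\cO_X)\to\R$, which must be an isomorphism. Under this identification, for any $s\in V$ and any $\alpha\in X(\R)$ one has $\varphi(s\otimes s)=\alpha^*\varphi(\alpha^*s\otimes\alpha^*s)\geq 0$ by the pointwise positive semidefiniteness of $\varphi$. Part~a) follows.

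For part~b), assume $s\in V$ satisfies $\varphi(s\otimes s)=0$. Then at every $\alpha\in X(\R)$ the vector $\alpha^*s$ is isotropic for the positive semidefinite form $\alpha^*\varphi$, and the usual expansion of $\alpha^*\varphi(\alpha^*s+\lambda v,\alpha^*s+\lambda v)\geq 0$ for all $\lambda\in\R$ and $v\in\alpha^*\cF$ shows that $\alpha^*s$ lies in the radical of $\alpha^*\varphi$. Let $\psi_s\colon\cF\to\cO_X$ be the $\cO_X$-linear morphism defined locally by $t\mapsto\varphi(s\otimes t)$; the previous observation says that $\alpha^*\psi_s=0$ for every $\alpha\in X(\R)$.

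To conclude, I would work on an affine open $U\subset X$, express $\psi_s|_U$ via a finite generating set of $\cF(U)$, and note that its coefficients are regular functions on $U$ vanishing at every real point. Zariski density of $X(\R)$ in the reduced variety $X$ forces these functions to be zero, so $\psi_s=0$. Passing to the stalk at $\xi$ and invoking nondegeneracy of $\varphi$ at $\xi$ yields $s_\xi=0$, and torsion-freeness of $\cF$ (which makes $\cF(U)\hookrightarrow\cF_\xi$ injective) then gives $s=0$. The hermitian case is proved by an identical argument, noting that isotropic vectors of a positive semidefinite hermitian form also lie in its radical, and that the identification $H^0(X,\cO_X)\cong\R$ is compatible, after base change, with $H^0(X_\C,\cO_{X_\C})\cong\C$ and complex conjugation. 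The main subtlety is the final paragraph: ensuring that vanishing of $\psi_s$ at every real point implies vanishing of $\psi_s$ itself, which is precisely the place where the three hypotheses of b)~-- Zariski density, nondegeneracy at $\xi$, and torsion-freeness -- are each used exactly once.
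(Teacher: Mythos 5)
Your proof is correct; part~a) coincides with the paper's one-line observation, but your argument for part~b) takes a genuinely different route. The paper works through real algebra: Zariski density of $X(\R)$ yields an ordering $P$ of $\kappa(\xi)$ (Artin--Lang), Tarski's transfer principle then gives $\varphi(s\otimes s)\geq_P 0$ in $\kappa(\xi)$, so the form on the span of $V$ inside $\cF_\xi$ is positive semidefinite with respect to $P$; nondegeneracy at $\xi$ upgrades this to positive definiteness there, and torsion-freeness finishes. You instead transfer only the \emph{vanishing} of $\varphi(s\otimes -)$: since an isotropic vector of a positive semidefinite form lies in its radical, the morphism $\psi_s$ vanishes at every real point, and a regular function on a reduced scheme vanishing on a Zariski-dense set is zero -- no orderings of the function field and no Tarski transfer are needed. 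Your version is therefore more elementary, and it also avoids the somewhat delicate point in the paper's argument where positive semidefiniteness on the span of $V$ in $\cF_\xi$ is combined with nondegeneracy on all of $\cF_\xi$ to deduce definiteness on that span; what the paper's route buys in exchange is the stronger intermediate fact that $\varphi$ is positive semidefinite at $\xi$ with respect to $P$, which the lemma itself does not require. Both proofs deploy the three hypotheses of b) -- density, nondegeneracy at $\xi$, torsion-freeness -- at the same final stages.
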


\begin{proof}
 For part $a)$ we observe that if $\varphi(s,s)=-1$ for some $s\in V$, then $\varphi$ is not positive semidefinite at any point from $X(\R)$.
 
 In order to show part $b)$ we first observe that since $X(\R)$ is Zariski dense in $X$, the field $\kappa(\xi)$ has an ordering $P$. Consider a nonzero section $s\in V$. Since $\varphi(s,s)\geq0$ at all points in $X(\R)$, Tarski's principle implies that $\varphi(s,s)$ is also nonnegative with respect to the ordering $P$ when considered as an element of $\kappa(\xi)$. Thus the bilinear form induced by $\varphi$ on the $\kappa(\xi)$-vector subspace of $\cF_\xi$ spanned by $V$ is also positive semidefinite (with respect to $P$). But since $\varphi$ is nondegenerate at $\xi$, it is even positive definite (with respect to $P$). Finally, because $\cF$ is torsion-free, the nonzero section $s$ is mapped to a nonzero element in the stalk $\cF_\xi$ and therefore by positive definiteness $\varphi(s,s)\neq0$. This shows the claim.
\end{proof}

Thus if we assume positive semidefiniteness, we only need to assure that our bilinear form is nondegenerate at the generic point rather than on global sections.

\begin{thm}\label{thm:ulrichifpsd}
 Let $Y$ be a geometrically irreducible variety which is proper over $\R$. Let $f: X\to Y$ be a finite surjective morphism where $X$ is an equidimensional variety over $\R$ with $X(\R)$ Zariski dense in $X$. Let $X_1,\ldots, X_s$ be the irreducible components of $X$.
 Let $\cF$ be a coherent torsion-free sheaf on $X$ and let $V=H^0(X,\cF)$. Let $\varphi: \cF\otimes_{\cO_X} \cF \to f^!\cO_Y$ be a symmetric bilinear map which is nondegenerate at the generic point of each $X_i$. If the induced $\cO_Y$-valued bilinear form on $f_*\cF$ is positive semidefinite, then the following are equivalent:
  \begin{enumerate}[(i)]
   \item $\dim V \geq\sum_{i=1}^s \deg(f|_{X_i})\cdot \rank (\cF|_{X_i})$.
   \item $\cF$ is an $f$-Ulrich sheaf.
  \end{enumerate}
\end{thm}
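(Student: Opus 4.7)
The plan is to reduce to Theorem~\ref{thm:ulrichifnondeg} (applied with $\cF_1 = \cF_2 = \cF$) by using the positivity hypothesis to produce nondegeneracy of the induced bilinear form $V \times V \to \R$ on global sections. The main device will be Lemma~\ref{lem:globalpd}(b) applied to the pushforward pairing $\varphi' \colon f_*\cF \otimes f_*\cF \to \cO_Y$ on $Y$, together with the identification $H^0(Y, f_*\cF) = H^0(X, \cF) = V$.

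First I would verify the hypotheses of Lemma~\ref{lem:globalpd}(b) for $\varphi'$ on $Y$. The scheme $Y$ is geometrically irreducible and proper over $\R$ by assumption. The set $Y(\R)$ is Zariski dense in $Y$: for any nonempty open $U \subset Y$, its preimage $f^{-1}(U)$ is nonempty and open in $X$, hence meets the Zariski-dense set $X(\R)$, and the image of such a real point lies in $U \cap Y(\R)$. The sheaf $f_*\cF$ is torsion-free by the same argument cited at the end of the proof of Theorem~\ref{thm:ulrichifnondeg}. Finally, $\varphi'$ is symmetric by a direct unwinding of the adjunction that produced it, and positive semidefinite by hypothesis.

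The main step, and the one I expect to be the principal technical obstacle, is to show that $\varphi'$ is nondegenerate at the generic point $\eta$ of $Y$. Our hypothesis only gives nondegeneracy of $\varphi$ at each generic point $\xi_i$ of $X_i$, and this must be transferred to the pushforward. I would proceed locally: over an affine open $\Spec(A) \subset Y$ with $K = \Quot(A)$ and preimage $\Spec(B)$, the ring $L = B \otimes_A K$ splits as $\prod_i L_i$ with $L_i = \kappa(\xi_i)$, since $X$ is reduced and each $X_i$ dominates $Y$. If $M$ is the $B$-module corresponding to $\cF$, then $(f_*\cF)_\eta = M \otimes_A K = \bigoplus_i M_i$ with $M_i = \cF_{\xi_i}$, and the stalk $(f^!\cO_Y)_{\xi_i} = \Hom_A(B,A) \otimes_B L_i$ is canonically identified with $\Hom_K(L_i, K)$. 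By $L_i / K$-adjunction, the map $(f_*\cF)_\eta \to \Hom_K((f_*\cF)_\eta, K)$ attached to $\varphi'$ decomposes as the product over $i$ of the maps $M_i \to \Hom_{L_i}(M_i, (f^!\cO_Y)_{\xi_i})$ attached to $\varphi$ at $\xi_i$, each of which is an isomorphism by assumption.

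With those inputs in place, Lemma~\ref{lem:globalpd}(b) applied to the symmetric, positive semidefinite, and generically nondegenerate pairing $\varphi'$ on the torsion-free sheaf $f_*\cF$ shows that the induced bilinear form on $V = H^0(Y, f_*\cF) = H^0(X, \cF)$ is positive definite, hence in particular nondegenerate. Theorem~\ref{thm:ulrichifnondeg} then yields the equivalence of (i) and (ii), using that $\rank(f_*\cF) = \sum_i \deg(f|_{X_i}) \cdot \rank(\cF|_{X_i})$ as established in its proof.
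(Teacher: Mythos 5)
Your proposal is correct and follows essentially the same route as the paper: push the pairing forward to $Y$, use the positivity hypothesis via Lemma~\ref{lem:globalpd}(b) to upgrade generic nondegeneracy to nondegeneracy on global sections, and then invoke Theorem~\ref{thm:ulrichifnondeg} (the paper unrolls one step further and cites Theorem~\ref{thm:freeifnondeg} directly, which is equivalent). The local splitting argument you give to transfer nondegeneracy from the generic points $\xi_i$ to the generic point of $Y$ is precisely the content of Lemma~\ref{lem:exc}, which the paper states and leaves as an exercise; your verification of it (including the observation that $M_i$ and $M_j$ pair to zero for $i \neq j$ because the pairing factors through $\cF \otimes_{\cO_X}\cF$, and the identification $\Hom_A(B,A)\otimes_B L_i \cong \Hom_K(L_i,K)$) is accurate, as is your explicit check that $Y(\R)$ is Zariski dense.
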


\begin{proof}
 Combining \Cref{lem:globalpd} and \Cref{thm:freeifnondeg} it remains to show that the induced symmetric $\cO_Y$-valued bilinear form on $f_*\cF$ is nondegenerate at the generic point of $Y$. But this follows from the assumption that $\varphi$ is nondegenerate at the generic point of each $X_i$ by \Cref{lem:exc} whose easy proof we leave as an exercise.  
\end{proof}

\begin{lem}\label{lem:exc}
 Let $A=K_1\times\cdots\times K_r$ be the direct product of fields $K_i$ each of which is a finite extension of the field $K$. Let $M$ be a finitely generated $A$-module. Then $M\cong V_1\times\cdots\times V_r$ where each $V_i$ is a finite dimensional $K_i$-vector space and the right-hand side is considered as an $A$-module in the obvious way. A homomorphism $\varphi: M\to\Hom_K(M,K)$ of $A$-modules such that all induced maps $V_i\to\Hom_K(V_i,K)$ are isomorphisms is an isomorphism itself. In particular, the induced $K$-bilinear form $M\otimes_K M\to K$ is nondegenerate.
\end{lem}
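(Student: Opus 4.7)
The plan is to exploit the orthogonal idempotents $e_1, \ldots, e_r \in A$ arising from the product decomposition (with $e_i$ the unit of $K_i$) to reduce everything to the individual factors. These satisfy $e_i e_j = \delta_{ij} e_i$ and $\sum_i e_i = 1$, so every $A$-module decomposes canonically into pieces supported on each factor.

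First I would apply this to $M$ itself. Setting $V_i := e_i M$, the relations among the $e_i$ give $M = \bigoplus_i V_i$, and since $e_j$ acts as zero on $V_i$ for $j \neq i$, the $A$-module structure on $V_i$ factors through $e_i A = K_i$, making $V_i$ a $K_i$-vector space, finite-dimensional by finite generation of $M$. This establishes the first assertion.

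Next I would decompose the dual $\Hom_K(M,K)$ as an $A$-module, where $A$ acts by $(a \cdot f)(m) = f(am)$. The $K$-linear splitting $M = \bigoplus V_i$ identifies $\Hom_K(M,K)$ with $\bigoplus_i \Hom_K(V_i,K)$, and one checks that $e_i \cdot \Hom_K(M,K)$ is precisely the summand $\Hom_K(V_i,K)$, viewed as the $K$-linear forms vanishing on $V_j$ for $j \neq i$; the induced $K_i$-action is the obvious one.

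The conclusion is then nearly formal: being $A$-linear, $\varphi$ commutes with each $e_i$, so it sends $V_i = e_i M$ into $e_i \cdot \Hom_K(M,K) = \Hom_K(V_i,K)$, and by construction its restriction is precisely the induced map $\varphi_i \colon V_i \to \Hom_K(V_i,K)$ of the hypothesis. Since each $\varphi_i$ is an isomorphism, $\varphi = \bigoplus_i \varphi_i$ is an isomorphism of $A$-modules, hence in particular of finite-dimensional $K$-vector spaces. Nondegeneracy of the associated bilinear form $b(m,m') = \varphi(m)(m')$ then follows at once: injectivity of $\varphi$ gives nondegeneracy in the first slot, and equality of $K$-dimensions forces it in the second. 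The only subtlety — hardly an obstacle — is tracking that the idempotent decomposition on the dual identifies correctly with the individual $\Hom_K(V_i,K)$; the rest is routine bookkeeping.
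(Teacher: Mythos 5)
Your argument is correct, and it fills in exactly what the authors omit: the paper explicitly leaves the proof of this lemma as an exercise, and the idempotent decomposition $M=\bigoplus_i e_iM$ together with the matching decomposition $\Hom_K(M,K)=\bigoplus_i e_i\Hom_K(M,K)\cong\bigoplus_i\Hom_K(V_i,K)$ is the intended routine verification. All the key checks are present — that $e_i\cdot\Hom_K(M,K)$ consists of the functionals vanishing on the $V_j$, $j\neq i$, that $A$-linearity forces $\varphi$ to respect the decomposition, and that finite-dimensionality of $M$ over $K$ (using that the $K_i$ are finite over $K$) upgrades injectivity of $\varphi$ to nondegeneracy of the bilinear form in both slots.
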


In the same manner we obtain the hermitian version.

\begin{thm}\label{thm:ulrichifpsdherm}
 Let $Y$ be a geometrically irreducible variety which is proper over $\R$. Let $f: X\to Y$ be a finite surjective morphism where $X$ is an equidimensional variety over $\R$ with $X(\R)$ Zariski dense in $X$. Let $X_1,\ldots, X_s$ be the irreducible components of $X_\C$.
 Let $\cF$ be a coherent torsion-free sheaf on $X_\C$ and let $V=H^0(X,\cF)$. Let $\varphi: \cF\otimes_{\cO_X} \overline{\cF} \to f^!\cO_{Y_\C}$ be a hermitian bilinear map which is nondegenerate at the generic point of each $X_i$. If the induced $\cO_{Y_\C}$-valued hermitian bilinear form on $f_*\cF$ is positive semidefinite, then the following are equivalent:
  \begin{enumerate}[(i)]
   \item $\dim V \geq\sum_{i=1}^s \deg(f|_{X_i})\cdot \rank (\cF|_{X_i})$.
   \item $\cF$ is an $f$-Ulrich sheaf.
  \end{enumerate}
\end{thm}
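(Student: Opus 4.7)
\medskip
\noindent\textbf{Proof plan.} The plan is to mirror the proof of \Cref{thm:ulrichifpsd}, but to perform the Ulrich-detection step after base change along $\Spec(\C)\to\Spec(\R)$, and to replace symmetric forms by hermitian forms everywhere. Write $f_\C\colon X_\C\to Y_\C$ for the base change of $f$ and recall that $Y_\C$ is integral and proper over $\C$ because $Y$ is geometrically irreducible and reduced and proper over $\R$. A hermitian pairing $\varphi\colon\cF\otimes\overline{\cF}\to f_\C^{!}\cO_{Y_\C}$ is in particular a $\C$-bilinear pairing in the sense of \Cref{sec:symm}, so it induces via \Cref{lem:grothbasic} a $\C$-bilinear pairing of the pushforwards $(f_\C)_*\cF$ and $(f_\C)_*\overline{\cF}$ on $Y_\C$, and further a $\C$-bilinear pairing on their spaces of global sections $V\times\overline{V}\to\C$. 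Nondegeneracy of the hermitian form on $V$ is equivalent to nondegeneracy of this $\C$-bilinear pairing.

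\medskip
\noindent The first step is to invoke the hermitian analog of \Cref{lem:globalpd}: since $\varphi$ is nondegenerate at the generic point of each $X_i$ and positive semidefinite at every real point, and since $X(\R)$ is Zariski dense in $X$ (hence accumulates on each irreducible component of $X_\C$ that is defined over $\R$), Tarski's principle forces the induced hermitian form on $V$ to be positive definite, and in particular nondegenerate.

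\medskip
\noindent The second step is to apply \Cref{thm:freeifnondeg} (with ground field $\C$) to the torsion-free coherent sheaves $\cF_1=(f_\C)_*\cF$ and $\cF_2=(f_\C)_*\overline{\cF}$ on $Y_\C$, equipped with the $\cO_{Y_\C}$-valued pairing just described. Torsion-freeness of these pushforwards follows as in the proof of \Cref{thm:ulrichifnondeg} from $f$ being finite surjective, $X$ equidimensional and $\cF$ torsion-free. By \cite[VI \S2, Prop.~2.7]{kollar} one has
\[\rank\bigl((f_\C)_*\cF\bigr)\;=\;\sum_{i=1}^{s}\deg(f_\C|_{X_i})\cdot\rank(\cF|_{X_i}),\]
so hypothesis (i) is exactly the rank inequality $\dim V\geq \rank((f_\C)_*\cF)$ required in \Cref{thm:freeifnondeg}. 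That theorem then yields $(f_\C)_*\cF\cong\cO_{Y_\C}^{r}$ with $r=\dim V$, which is the definition of $\cF$ being $f_\C$-Ulrich; this is the meaning of $f$-Ulrich in the hermitian setting. The converse implication (ii)$\Rightarrow$(i) follows immediately from the rank formula above upon passing to a generic point.

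\medskip
\noindent The only real subtlety, and thus the main point to check, is the hermitian version of \Cref{lem:globalpd}: one must argue that positive semidefiniteness of $\varphi$ on the real locus $X(\R)$ forces positive definiteness on the $\C$-vector space $V=H^0(X_\C,\cF)$, even though complex conjugation may permute some of the irreducible components $X_1,\dots,X_s$ of $X_\C$ and only the conjugation-fixed components meet $X(\R)$. This is handled by noting that $\overline{\cF}|_{X_i}$ has the same rank as $\cF|_{\sigma(X_i)}$, so conjugation permutes the terms of the rank sum without changing it, and that a nonzero $s\in V$ satisfies $\varphi(s,\overline{s})\neq 0$ already at the generic point of a conjugation-fixed component on which $s$ does not vanish, by the Tarski argument of \Cref{lem:globalpd}. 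Apart from this bookkeeping, every step is a direct translation of the symmetric proof.
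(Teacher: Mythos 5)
Your argument is correct and follows the same route the paper takes: the paper proves only the symmetric case (\Cref{thm:ulrichifpsd}) by combining \Cref{lem:globalpd}, \Cref{thm:freeifnondeg} and \Cref{lem:exc}, and disposes of the hermitian version with the remark that it follows ``in the same manner'', which is exactly what you carry out. The one ``subtlety'' you flag is in fact vacuous: since $X(\R)$ is Zariski dense in $X$, every irreducible component of $X_\C$ is already fixed by conjugation (a component of $X$ whose complexification splits as $W\cup\overline{W}$ has its real points confined to the proper closed subset $W\cap\overline{W}$), so no bookkeeping about permuted components is needed.
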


\begin{Def}
	When the conditions, including those on $X,Y$ and $f$, of \Cref{thm:ulrichifpsd} or \ref{thm:ulrichifpsdherm} are satisfied we say that $\cF$ is a \emph{positive symmetric or hermitian} $f$-Ulrich sheaf, respectively.
\end{Def}

In order to check the positivity condition the following lemma will be useful.

\begin{lem}\label{thm:rank1}
 Let $Y$ be an irreducible smooth variety with $Y(\R)\neq\emptyset$ and let $X$ be an equidimensional variety over $\R$.
 Let $f: X\to Y$ be a finite surjective real fibered morphism.
 Let $s$ be a global section of $\cK_X$ which is nonnegative on $X(\R)$ and let $\cD$ be the subsheaf of $\cK_X$ given by $s\cdot\Delta(X/Y)$. Let $\cL$ be a subsheaf of $\cK_X$ such that $\cL\cdot\cL\subset\cD$. Then there is a $f^!\cO_Y$-valued symmetric bilinear form on $\cL$ such that the induced $\cO_Y$-valued bilinear form on $f_*\cL$ is positive semidefinite.
\end{lem}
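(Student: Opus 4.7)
The plan is to build the pairing from multiplication in $\cK_X$ composed with the trace map. Concretely, I would define
\[
\varphi\colon \cL\otimes_{\cO_X}\cL \longrightarrow f^!\cO_Y
\]
as follows. Multiplication in $\cK_X$ gives a map $\cL\otimes\cL\to\cK_X$ whose image is contained in $\cD=s\cdot\Delta(X/Y)$ by hypothesis. Interpreting $ab/s$ as the unique element of $\Delta(X/Y)$ whose $s$-multiple equals $ab$ (on components where $s$ does not vanish; the other components carry no sections of $\cL\cdot\cL$), we obtain a map $\cL\otimes\cL\to\Delta(X/Y)$, which we post-compose with the isomorphism $\Delta(X/Y)\cong f^!\cO_Y$ from \Cref{lem:codifshrieks}. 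Symmetry is immediate because multiplication in $\cK_X$ is commutative.

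From the explicit affine description following \Cref{def:codif}, the induced $\cO_Y$-valued pairing on $f_*\cL$ then sends local sections $(a,b)$ to $\tr_{L/K}(ab/s)\in\cO_Y$, where $K$ and $L$ are the total quotient rings of $Y$ and $X$. To verify positive semidefiniteness at a real point $\alpha\in Y(\R)$, I would pick local sections $a_1,\ldots,a_n$ of $f_*\cL$ generating the stalk at $\alpha$. Then $\alpha^*(f_*\cL)$ is spanned over $\R$ by the images $\alpha^*a_1,\ldots,\alpha^*a_n$, and for arbitrary real coefficients $c_1,\ldots,c_n$ bilinearity gives
\[
\sum_{i,j} c_ic_j\,(\alpha^*\varphi)(\alpha^*a_i,\alpha^*a_j) \;=\; \tr_{L/K}\!\Bigl(\bigl(\tfrac{1}{s}\bigr)\bigl(\textstyle\sum_i c_ia_i\bigr)^{\!2}\Bigr)\!\bigm|_\alpha.
\]
Setting $v=\sum_i c_ia_i$, the element $v^2/s$ is a local section of $\Delta(X/Y)$ and is nonnegative on $X(\R)$ wherever defined, because $v^2\geq 0$ and $s\geq 0$ there. \Cref{lem:tracelem} then gives $\tr_{L/K}(v^2/s)\geq 0$ on $Y(\R)$ where it is defined, hence in particular at $\alpha$. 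This establishes positive semidefiniteness.

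I expect the main obstacle to be the initial bookkeeping that makes "divide $ab$ by $s$" well-defined as a map of sheaves. One must handle the fact that $s$ may be a zerodivisor in $\cK_X$ (for instance, vanishing identically on some components of the equidimensional variety $X$); on those components the containment $\cL\cdot\cL\subset s\cdot\Delta(X/Y)$ forces the relevant sections of $\cL$ themselves to vanish, so the quotient $ab/s$ still lifts canonically to $\Delta(X/Y)$. Once this is carefully unwound on affine patches, the remainder of the argument is a direct application of \Cref{lem:tracelem} combined with bilinearity.
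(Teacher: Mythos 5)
Your proposal is correct and follows essentially the same route as the paper: define the $\Delta(X/Y)$-valued pairing $(g,h)\mapsto\frac{g\cdot h}{s}$, pass to $f^!\cO_Y$ via \Cref{lem:codifshrieks}, and deduce positive semidefiniteness from \Cref{lem:tracelem} applied to $\frac{v^2}{s}$. The additional bookkeeping you flag about components where $s$ vanishes is a reasonable precaution but does not change the argument.
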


\begin{proof}
 By assumption we can define the symmetric $\Delta(X/Y)$-valued bilinear form on $\cL$ that maps a pair of sections $(g,h)$ to $\frac{g\cdot h}{s}$. Thus the induced $\cO_Y$-valued bilinear form maps $(g,g)$ to the trace of $\frac{g^2}{s}$ which is nonnegative by \Cref{lem:tracelem}.
\end{proof}

\begin{rem}
 Let $\iota:X\hookrightarrow\pp^n$ be an embedding of a $k$-dimensional projective variety and let $\cF$ be an Ulrich sheaf on $X$, i.e. $\cF$ is $\pi$-Ulrich for some finite surjective linear projection $\pi:X\to\pp^k$. One can show that $$\iota_*(\shHom_{\cO_X}(\cF,\pi^!\cO_{\pp^k}))\cong\cE xt^{n-k}(\iota_*\cF,\omega_{\pp^n})(k+1).$$Thus our notion of symmetry coincides with the one introduced in \cite[\S 3.1]{ES03}. In particular, the property of being symmetric does not depend on the choice of the linear projection $\pi$ but the positivity condition does, as the next example shows.
\end{rem}

\begin{ex}
 Let $Y=\pp^1$ and $X=\cV(x_0^2-x_1^2-x_2^2)\subset\pp^2$. We let $f_i:X\to\pp^1$ be the linear projection with center $e_i$ where $e_1=[1:0:0]$ and $e_2=[0:1:0]$. Note that $f_1$ is real fibered but $f_2$ is not. Let $P=[1:1:0]\in X$ (Weil divisor on $X$) and $\cL=\cL(P)$ the corresponding invertible subsheaf of $\cK_X$. A basis of the space of global sections $V$ of $\cL(P)$ is given by the two rational functions $g_1=1$ and $g_2=\frac{x_0+x_1}{x_2}$. The ramification divisor $R_i$ of $f_i$ is given by $R_1=Q_0+\overline{Q_0}$ and $R_2=Q_1+Q_2$ with $Q_0=[0:1:\textrm{i}],\, Q_1=[1:0:1] \textrm{ and } Q_2=[1:0:-1].$ Denoting by $\Delta_i$ the codifferent sheaf associated to $f_i$, we thus get that \begin{align}\label{eqn:bil} (a,b)\mapsto a\cdot b\cdot \frac{x_0-x_1}{x_0}\end{align} defines a symmetric bilinear form $\cL\otimes\cL\to\Delta_1$. Via the isomorphism $\Delta_1\cong f_1^!\cO_{\pp^1}$ this induces an $\cO_{\pp^1}$-valued bilinear form on $(f_1)_*\cL$. With respect to the above basis of $V$ it is given by the matrix $$ \begin{pmatrix} 2 & 0\\0 &2                                                                                                                                                                                                                                                                                                                                                                                                                                                                                                                                                                                                                                                                                                                                                                                                                                                                                                                                                                                              \end{pmatrix}$$ which is positive definite. Thus $\cL$ is a positive symmetric $f_1$-Ulrich sheaf. On the other hand, via the isomorphism $\Delta_1\to\Delta_2$ that is given by multiplication with $\frac{x_0}{x_1}$ we get from ($\ref{eqn:bil}$) and $\Delta_2\cong f_2^!\cO_{\pp^1}$ an $\cO_{\pp^1}$-valued bilinear form on $(f_2)_*\cL$. With respect to the above basis of $V$ it is given by the matrix $$ \begin{pmatrix} -2 &0 \\ 0& 2                                                                                                                                                                                                                                                                                                                                                                                                                                                                                                                                                                                                                                                                                                                                                                                                                                                                                                                                                                                             \end{pmatrix}$$ which is not positive semidefinite. Thus although $f_1^!\cO_{\pp^1}\cong f_2^!\cO_{\pp^1}$ as abstract line bundles, for checking the positivity condition we need to specify the morphism. Moreover, \Cref{prop:posthenrf} will show that actually no nonzero symmetric $f_2^!\cO_{\pp^1}$-valued bilinear form on $\cL$ will induce a positive semidefinite bilinear form on $(f_2)_*\cL$ since $f_2$ is not real fibered.
\end{ex}

Since later on we will focus on irreducible varieties, we close this section with an example for the reducible case. A systematic study of positive Ulrich sheaves on reducible hypersurfaces would be very interesting with regard to the so-called \emph{generalized Lax conjecture}, see \cite[Con.~3.3]{victorsurvey}. We think it would be particularly beneficial to understand how the main result of \cite{kumbez} fits into our context here.

\begin{ex}\label{ex:quadsurface}
 Let $l=x_0-x_1$ and $h=x_0^2-(x_1^2+x_2^2+x_3^2)$.
 Let $X=X_1\cup X_2$ where $X_1=\cV(l)\subset\pp^3$ and $X_2=\cV(h)\subset\pp^3$. The linear projection $$f:X\to\pp^2,\,[x_0:x_1:x_2:x_3]\to[x_1:x_2:x_3]$$ with center $e=[1:0:0:0]$ is real fibered and we want to construct a positive symmetric $f$-Ulrich sheaf on $X$. The function field of $\pp^2$ is $K=\R(\frac{x_2}{x_1},\frac{x_3}{x_1})$ and the total quotient ring of $X$ is $L=L_1\times L_2$ where $L_i$ is the function field of $X_i$ for $i=1,2$. We note that $\cO_X$ can be identified with the following subsheaf of $\cK_X$: $$\cO_X(U)=\{(a,b)\in\cO_{X_1}(U\cap X_1)\times\cO_{X_2}(U\cap X_2):\, (a-b)|_{X_1\cap X_2\cap U}=0\}$$for $U\subset X$. We further define the subsheaf $\cP$ of $\cK_X$ via $$\cP(U)=\{(a,b)\in\cO_{X_1}(U\cap X_1)\times\cO_{X_2}(U\cap X_2):\, (a+b)|_{X_1\cap X_2\cap U}=0\}$$for $U\subset X$. Note that $\cP$ is a line bundle on $X$ and $\cP\cdot\cP=\cO_X$. Finally let \[\cE(U)=\{(a,b)\in\cO_{X_1}(U\cap X_1)\times\cO_{X_2}(U\cap X_2):\] \[ (a-\frac{x_3}{x_2}b)|_{X_1\cap X_2\cap U\cap U_2}=0 \textrm{ and }\]\[ (\frac{x_2}{x_3}a-b)|_{X_1\cap X_2\cap U\cap U_3}=0 \}.\]The coherent sheaf $\cE$ is defined in such a way that $\cE\cdot\cE\subset\cP$.
 
 Now let $V_i\subset\pp^2$ be the open affine set where $x_i\neq0$ and $U_i=f^{-1}(V_i)$ for $i=1,2,3$. By \Cref{ex:simplecodiff} the codifferent sheaf $\Delta(X/\pp^2)$ is the subsheaf of $\cK_X$ generated by $\frac{x_i^2}{\textrm{D}_e(l\cdot h)}$ on $U_i$ for $i=1,2,3$. As an element of $L_1\times L_2$ this is $$\left(-\frac{x_i^2}{x_2^2+x_3^2},\, \frac{x_i^2}{2\cdot(x_1^2+x_2^2+x_3^2-x_0x_1)}\right)$$ on $U_i$ for $i=1,2,3$. Let $\cL$ be the subsheaf of $\cK_X$ generated by $(\frac{x_i}{x_1},\frac{x_i}{x_1})$ on $U_i$. Consider $$s=\left(\frac{x_2^2+x_3^2}{x_1^2},\, \frac{2\cdot(x_1^2+x_2^2+x_3^2-x_0x_1)}{x_1^2}\right)$$ which is nonnegative on $X(\R)$. We have $\cL\cdot\cL\subset s\cdot\cP\cdot\Delta(X/\pp^2)$. Thus we have $\cF\cdot\cF\subset s\cdot \Delta(X/\pp^2)$ where $\cF=\cE\cdot\cL$. This gives us a $f^!\cO_{\pp^2}$-valued symmetric bilinear form on $\cL$ such that the induced $\cO_{\pp^2}$-valued bilinear form on $f_*\cF$ is positive semidefinite by \Cref{thm:rank1}. Here are linearly independent global sections of $\cF$: $$(0,x_0-x_1),\,(x_2,-x_3),\,(x_3,x_2).$$ Thus $\cF$ is a positive symmetric $f$-Ulrich sheaf.
\end{ex}

\section{Ulrich sheaves, determinantal representations and sums of squares}\label{sec:detreps}
The main reason why we are interested in Ulrich sheaves is because they correspond to certain determinantal representations. For the applications we consider in this article, we are interested in the following situation. Let $S=\R[y,x_1,\ldots,x_n]$ be the polynomial ring with the grading determined by letting $\deg(y)=e$ and $\deg(x_i)=1$ for $i=1,\ldots,n$, let $h\in S_{de}$ be an irreducible homogeneous element of degree $d\cdot e$ and $X=\cV(h)\subset\pp(e,1,\ldots,1)$ the hypersurface in the weighted projective space corresponding to $S$. Assume  $h(1,0,\cdots,0)=1$. The following proposition and the subsequent remark are well known among experts. But since we are not aware of a reference for the precise statement that we need, we include a proof for the sake of completeness. The case $e=1$ is treated for example in \cite[Thm.~A]{dethyp} and the proof for the general case works essentially in the same way.

\begin{prop}\label{prop:ulrichdetrep}
	Let $f:X\to\pp^{n-1}$ be the projection on the last coordinates and let $\cF$ be an $f$-Ulrich sheaf on $X$ with $\rank(\cF)=r$. Then there is a square matrix $A$ of size $d\cdot r$ 
	whose entries are homogeneous polynomials in the $x_i$ of degree $e$ such that $h^r=\det(y\cdot I-A)$. If $\cF$ is a positive symmetric (resp. hermitian) $f$-Ulrich sheaf, then $A$ can be chosen to be symmetric (resp. hermitian).
\end{prop}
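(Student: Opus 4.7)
The plan is to realize multiplication by $y$ as an endomorphism (up to a degree-$e$ twist) of $f_*\cF\cong\cO_{\pp^{n-1}}^{dr}$ and to read the matrix $A$ off a trivialization.

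Since $h$ is monic in $y$ of degree $d$, the projection $f:X\to\pp^{n-1}$ is finite of degree $d$, and the $f$-Ulrich hypothesis combined with $\rank\cF=r$ forces $f_*\cF\cong\cO_{\pp^{n-1}}^{dr}$. The global section $y\in H^0(X,\cO_X(e))$ induces a morphism $\cF\to\cF(e)$, and since $\cO_X(j)=f^*\cO_{\pp^{n-1}}(j)$, its pushforward is a morphism $f_*\cF\to f_*\cF\otimes\cO_{\pp^{n-1}}(e)$. Fixing any isomorphism $f_*\cF\cong\cO_{\pp^{n-1}}^{dr}$ represents this map by a matrix $A\in\Mat_{dr}(H^0(\pp^{n-1},\cO(e)))=\Mat_{dr}(\R[x_1,\dots,x_n]_e)$.

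To prove $h^r=\det(yI-A)$, I would restrict to the open dense subset $U\subset\pp^{n-1}$ over which $f$ is \'etale (nonempty since $h$ is separable in $y$, being irreducible in characteristic zero) and $\cF$ is locally free of rank $r$. For a closed point $p\in U$, the fiber algebra is the \'etale $\kappa(p)$-algebra $\prod_{i=1}^d\kappa(q_i)$, where $q_1,\dots,q_d$ are the geometric points of $f^{-1}(p)$ whose $y$-coordinates are the roots of $h(t,p)$. Accordingly $(f_*\cF)(p)=\bigoplus_{i=1}^d\cF(q_i)$, with each summand of dimension $r$, and multiplication by $y$ acts on the $i$-th summand as the scalar $y(q_i)$; hence $\det(tI-A(p))=\prod_{i=1}^d(t-y(q_i))^r=h(t,p)^r$. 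Since this holds on a Zariski dense set of $p$, the polynomial identity $\det(tI-A)=h^r$ follows in $\R[t,x_1,\dots,x_n]$, and substituting $t=y$ gives the claim. This \'etale fiber decomposition, together with the compatibility of $y$-multiplication with it, is the main step; once it is in place the rest is linear algebra.

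For the symmetric (resp.\ hermitian) refinement, the pairing $\varphi$ on $\cF$ corresponds via \Cref{lem:grothbasic} to a symmetric (hermitian) $\cO_{\pp^{n-1}}$-valued pairing on $f_*\cF$. Because $\pp^{n-1}$ is proper over the ground field, pulled through the isomorphism $f_*\cF\cong\cO_{\pp^{n-1}}^{dr}$ this pairing is given by a constant matrix $B$ in $\Mat_{dr}(\R)$ (resp.\ $\Mat_{dr}(\C)$), and by \Cref{lem:globalpd} $B$ is positive definite. Precomposing the isomorphism with a change of basis that brings $B$ to the identity makes the pairing the standard one, and the identity $\varphi(ys_1,s_2)=y\cdot\varphi(s_1,s_2)=\varphi(s_1,ys_2)$, which holds because $\varphi$ is $\cO_X$-bilinear and $y$ is real (so conjugation fixes $y$ in the hermitian case), then translates into $A$ being symmetric (resp.\ hermitian) with respect to the standard form.
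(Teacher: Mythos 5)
Your proof is correct and follows essentially the same strategy as the paper's: trivialize $f_*\cF\cong\cO_{\pp^{n-1}}^{dr}$, read $A$ off the multiplication-by-$y$ map, and in the symmetric/hermitian case orthonormalize the constant positive definite Gram matrix supplied by \Cref{lem:globalpd} so that self-adjointness of $y$-multiplication forces $A=A^T$ (resp.\ $A=\bar A^T$). The only genuine divergence is your justification of $\det(yI-A)=h^r$: the paper notes that the minimal polynomial of $A$ over $R=\R[x_1,\ldots,x_n]$ is the irreducible polynomial $h$ (since the module of twisted global sections is a faithful $S/(h)$-module) and concludes by a degree count, whereas you compute the characteristic polynomial fiberwise over the \'etale locus and conclude by Zariski density; both arguments are valid, yours exhibiting the multiplicity $r$ directly at the cost of a little more bookkeeping about the locus where $\cF$ is locally free.
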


\begin{proof}
	Let $\cO_X(1)$ be the pullback of $\cO_{\pp^{n-1}}(1)$ via $f$ and let $M=\oplus_{i\in\Z}H^0(X,\cF(i))$ be the module of twisted global sections. Since $\cF$ is an $f$-Ulrich sheaf, we have that $M$ considered as a module over $R=\R[x_1,\ldots,x_n]\subset S$ is isomorphic to $R^{d\cdot r}$.
	Multiplication with $y$ is an $R$-linear map, homogeneous of degree $e$, that can thus be represented by a square matrix $A$ of size $d\cdot r$ 
	whose entries are real homogeneous polynomials in the $x_i$ of degree $e$ and whose minimal polynomial is $h$. Thus $h^r=\det(y\cdot I-A)$. Further, a symmetric positive semidefinite bilinear form {as in Theorem~\ref{thm:ulrichifpsd}} yields a homomorphism of graded $S$-modules (of degree zero)
	$$\psi:M\to\Hom_R(M,R)$$
	which has the property that the induced $R$-bilinear form
	\begin{align*}
		R^{d\cdot r}\times R^{d\cdot r}&\to R\\
		(a,b)&\mapsto (\psi(a))(b)
	\end{align*}
	is symmetric, i.e., $(\psi(a))(b)=(\psi(b))(a)$. The degree zero part $V$ of $M\cong_R R^{d\cdot r}$ is the space of global sections of $\cF$ and the restriction of this symmetric bilinear form to $V$ is thus positive definite by \Cref{lem:globalpd}. We can therefore choose an orthonormal basis of $V$ with respect to this bilinear form. Note that this will also be a basis of the $R$-module $R^{d\cdot r}$ that is orthonormal with respect to $\psi$. Since $\psi$ is a homomorphism of $S$-modules, we have $(\psi(a))(y\cdot b)=(\psi(y\cdot a))(b)$, so multiplication with $y$ is self-adjoint with respect to our above defined symmetric bilinear form. Thus we can choose the representing matrix $A$ of the $R$-linear map given by multiplication with $y$ to be symmetric. The hermitian case follows analogously.
\end{proof}

\begin{rem}\label{rem:detrepthenulrich}
 The converse of \Cref{prop:ulrichdetrep} is also true: If $h^r=\det(y\cdot I-A)$, then the cokernel $M$ of the map $S^{d\cdot r}\to S^{d\cdot r}$ given by $y\cdot I-A$ is supported on $X$ and $M$ considered as $R$-module is just $R^{d\cdot r}$. If $A$ is symmetric, then an isomorphism of $M$ with $\Hom_R(M,R)$ as $S$-modules is given by sending the standard basis of $R^{d\cdot r}$ to its dual basis. We argue analogously in the hermitian case.
\end{rem}

A refined statement is true for possibly reducible subvarieties $X\subset\pp^n$ that are not necessarily hypersurfaces, see \cite[Thm.~5.7]{eliich}. These correspond to so-called determinantal representations of Livsic-type introduced in \cite{sha14} which are closely related to determinantal representations of the Chow form of $X$, see also \cite{ES03}. 
The main applications of this article however concern irreducible varieties only.

\begin{ex}
 The positive symmetric $f$-Ulrich sheaf on the reducible cubic hypersurface $X=\cV((x_0-x_1)\cdot(x_0^2-(x_1^2+x_2^2+x_3^2))\subset\pp^3$ from \Cref{ex:quadsurface} gives the following symmetric definite determinantal representation $$\begin{pmatrix}
                x_0+x_1& x_3 & -x_2\\                                                                                                                                                                                                                             
                x_3& x_0-x_1 & 0\\
-x_2& 0& x_0-x_1                \end{pmatrix}.$$
\end{ex}

\bigskip

Now we consider again the situation of \Cref{ex:nonneg} when $h$ is of the form $y^2-p(x)$ where $p\in\R[x_1,\ldots,x_n]_{2e}$ is a globally nonnegative polynomial.

\begin{lem}\label{lem:detrepthensos}
 Let $p\in\R[x_1,\ldots,x_n]_{2e}$ be a globally nonnegative polynomial which is not a square. Let $h=y^2-p(x)$ where $y$ has degree $e$.
 If $h^r=\det(y\cdot I-A)$ for some $r\geq 1$ and a symmetric or hermitian matrix $A$ of size $2r$ whose entries are homogeneous of degree $e$, then $p$ is a sum of $2r$ squares in the symmetric case and a sum of $4r-1$ squares in the hermitian case.
\end{lem}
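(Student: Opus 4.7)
The plan is first to establish the matrix identity $A^2 = p \cdot I$, and then to extract a sum of squares representation of $p$ directly from the $(1,1)$-entry of $A^2$.

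To show $A^2 = p \cdot I$, I would start from the characteristic polynomial identity $\det(yI - A) = h^r = (y^2 - p)^r$; Cayley--Hamilton then gives $(A^2 - pI)^r = 0$, but the stronger conclusion that $A^2 = pI$ is needed. Since $p$ is not a square it is in particular not identically zero, and because $p \geq 0$ on $\R^n$ the set $U = \{x_0 \in \R^n : p(x_0) > 0\}$ is a nonempty Euclidean-open, hence Zariski-dense, subset of $\R^n$. For each $x_0 \in U$ the matrix $A(x_0)$ is a real symmetric (respectively complex hermitian) matrix of size $2r$, so it is diagonalizable over $\R$ (respectively over $\C$); since its characteristic polynomial $(y^2 - p(x_0))^r$ has the two distinct roots $\pm\sqrt{p(x_0)}$, the minimal polynomial of $A(x_0)$ must equal $y^2 - p(x_0)$. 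Hence $A(x_0)^2 = p(x_0) \cdot I$ for all $x_0 \in U$, and this Zariski-dense family of identities forces the polynomial identity $A^2 = p \cdot I$.

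Once $A^2 = p \cdot I$ is known, comparing the $(1,1)$-entries of both sides yields
\[
 p \;=\; \sum_{k=1}^{2r} A_{1k} A_{k1}.
\]
In the symmetric case $A_{k1} = A_{1k} \in \R[x_1,\ldots,x_n]_e$, so $p = \sum_{k=1}^{2r} A_{1k}^2$ is a representation as a sum of $2r$ squares. In the hermitian case $A_{k1} = \overline{A_{1k}}$; the diagonal entry $A_{11}$ is real since $A_{11} = \overline{A_{11}}$, while for each $k \geq 2$ we write $A_{1k} = a_k + i b_k$ with $a_k, b_k \in \R[x_1,\ldots,x_n]_e$ and compute $A_{1k}\overline{A_{1k}} = a_k^2 + b_k^2$. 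This contributes $1$ square from the diagonal and $2$ squares from each of the $2r - 1$ off-diagonal terms, for a total of $1 + 2(2r-1) = 4r - 1$ real squares.

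The main obstacle is the upgrade from $(A^2 - pI)^r = 0$ to the sharper $A^2 = pI$: this fails for a general matrix with characteristic polynomial $(y^2-p)^r$, and is precisely where the symmetric/hermitian hypothesis enters, through the diagonalizability of such matrices at real points. The hypothesis that $p$ is not a square plays the modest role of guaranteeing $p \not\equiv 0$, so that the specialization set $U$ on which the pointwise identity is proved is nonempty.
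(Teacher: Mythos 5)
Your proof is correct, and its second half --- reading off the representation from a row of the identity $A^2=p\cdot I$ and counting $2r$ real squares, respectively $1+2(2r-1)=4r-1$ in the hermitian case --- is exactly what the paper does. Where you diverge is in how $A^2=p\cdot I$ is established. The paper works at the generic point: since $p$ is not a square, $h=y^2-p$ is irreducible over $\R(x_1,\ldots,x_n)$, and it concludes that $h$ is the minimal polynomial of $A$. As written this step is terse, because for an arbitrary matrix with characteristic polynomial $h^r$ the minimal polynomial can be a higher power $h^k$ (e.g.\ the companion matrix of $h^2$); one really needs the symmetric/hermitian hypothesis, say via diagonalizability of symmetric matrices over the real closure of the formally real field $\R(x_1,\ldots,x_n)$, to know the minimal polynomial is squarefree. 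Your specialization argument supplies precisely this input in concrete form: on the Euclidean-open, hence Zariski-dense, set where $p>0$, the real symmetric (resp.\ hermitian) matrix $A(x_0)$ is diagonalizable with spectrum contained in $\{\pm\sqrt{p(x_0)}\}$, so $A(x_0)^2=p(x_0)\cdot I$ pointwise and therefore identically. A side effect you correctly observe is that your route uses the hypothesis that $p$ is not a square only to guarantee $p\not\equiv 0$ (and indeed would survive $p$ being a square, where the conclusion is trivial), whereas the paper uses it to get irreducibility of $h$; in exchange, your argument leans on nonnegativity of $p$ and the spectral theorem rather than on field-theoretic irreducibility. Both are valid; yours is arguably the more self-contained justification of the key identity.
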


\begin{proof}
 Since $p$ is not a square, we find that $h$ is irreducible. Thus $h^r=\det(y\cdot I-A)$ implies that $h$ is the minimal polynomial of $A$, i.e., $A^2=p\cdot I$. Letting $a_i$ be the $i$th column of $A$ we then have $p=a_i^ta_i$ in the symmetric case and $p=a_i^t\overline{a_i}$ in the hermitian case. Thus $p$ is a sum of $2r$ squares in the symmetric case and a sum of $4r-1$ squares in the hermitian case. 
\end{proof}

A converse of this last conclusion is given by the following lemma.

\begin{lem}\label{lem:cliff}
 Let $P=G_1^2+\ldots+G_n^2$ where the $G_i\in A$ are elements of some commutative ring $A$. There is a symmetric square matrix $Q$ of some size $m\in\N$ whose entries are $\Z$-linear combinations of the $G_i$ such that $P\cdot I_m=Q^2$.
\end{lem}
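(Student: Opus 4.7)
The plan is to reduce the statement to the existence of integer symmetric matrices satisfying the Clifford relations, and then carry out such a construction by induction on $n$. Concretely, suppose we can find symmetric matrices $E_1,\dots,E_n \in \Mat_m(\Z)$ with
\[
 E_i^2 = I_m \quad (1\leq i \leq n), \qquad E_iE_j + E_jE_i = 0 \quad (i\neq j).
\]
Then I would simply set $Q = G_1 E_1 + \ldots + G_n E_n$. Since scalars in $A$ commute with the integer entries, $Q$ is symmetric, and
\[
 Q^2 \;=\; \sum_{i=1}^n G_i^2 \,E_i^2 \;+\; \sum_{i<j} G_iG_j\,(E_iE_j + E_jE_i) \;=\; P\cdot I_m,
\]
which is the desired identity.

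The heart of the argument is therefore the construction of the $E_i$. I would do this by induction on $n$. For $n=1$ take $m=1$ and $E_1 = (1)$. For the inductive step, assume that symmetric integer matrices $E_1,\dots,E_n \in \Mat_m(\Z)$ satisfying the Clifford relations are given, and put
\[
 E_i' \;=\; \begin{pmatrix} 0 & E_i \\ E_i & 0 \end{pmatrix} \in \Mat_{2m}(\Z) \quad (1\leq i \leq n), \qquad E_{n+1}' \;=\; \begin{pmatrix} I_m & 0 \\ 0 & -I_m \end{pmatrix}.
\]
(This is the tensor product with the Pauli matrices $\sigma_1$ and $\sigma_3$.) All of these matrices are symmetric with integer entries; a direct block computation shows $(E_i')^2 = I_{2m}$ for each $i$, while $E_i'E_j' + E_j'E_i'$ vanishes because of the Clifford relations for the $E_i$ in the first $n$ indices and because $\sigma_1$ and $\sigma_3$ anticommute in the remaining case. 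This yields the required matrices of size $2^{n-1}$ for $n$ generators.

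I do not expect any serious obstacle: the only thing to watch is that we genuinely obtain \emph{symmetric} (not merely orthogonal or Hermitian) integer matrices, which is why the explicit block construction above, avoiding any factor of $i$, is convenient. Both steps—the inductive Clifford construction and the expansion of $Q^2$—are routine, and combining them yields the lemma with $m = 2^{n-1}$.
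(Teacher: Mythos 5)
Your proof is correct, and it is a genuinely different realization of the same Clifford-algebra idea. The paper works with the algebra $Cl_{0,n}(\R)$ (relations $e_i^2=-1$), takes the matrices $A_i$ of left multiplication in the standard basis, observes these are \emph{skew}-symmetric integer matrices, deduces $S S^t=(x_1^2+\cdots+x_n^2)I_N$ for $S=\sum x_iA_i$ with $N=2^n$, and then symmetrizes at the end by passing to the block matrix $\bigl(\begin{smallmatrix}0&S\\ S^t&0\end{smallmatrix}\bigr)$ of size $2^{n+1}$. You instead directly build \emph{symmetric} integer generators for the positive-definite relations $E_i^2=+I$, $E_iE_j+E_jE_i=0$, by the inductive tensor-with-Pauli-matrices construction, and then $Q=\sum G_iE_i$ is already symmetric with $Q^2=P\cdot I_m$; commutativity of $A$ is exactly what makes the cross terms collapse. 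Both routes are sound. What yours buys is economy and directness: you avoid the detour through skew-symmetric matrices and the final doubling, and you get $m=2^{n-1}$ rather than $2^{n+1}$ (for the purposes of \Cref{thm:ulrichsos} this means a smaller bound on the number of squares extracted back out via \Cref{lem:detrepthensos}). What the paper's version buys is that the matrices come packaged as the regular representation of a standard algebra, so the relations $A_i^T=-A_i$, $A_i^2=-I$ need no induction, only an inspection of the basis $e_{i_1}\cdots e_{i_r}$. Your one point of care --- that the generators be genuinely symmetric over $\Z$, with no factors of $\mathrm{i}$ --- is indeed the crux, and your explicit blocks handle it.
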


\begin{proof}
 This follows from the basic properties of Clifford algebras. Recall that the Clifford algebra $Cl_{0,n}(\R)$ is generated by $e_1,\ldots,e_n$ satisfying $e_i\cdot e_j=-e_j\cdot e_i$ if $i\neq j$ and $e_i^2=-1$. In particular, we have \[x_1^2+\ldots+x_n^2=-(x_1 e_1+\ldots+x_n e_n)\cdot(x_1 e_1+\ldots+x_n e_n)\] for all $x_k\in\R$. For $1\leq i\leq n$ let $A_i$ be the representing matrix of the map $$Cl_{0,n}(\R)\to Cl_{0,n}(\R),\, a\mapsto e_i\cdot a$$ with respect to the basis $e_{i_1}\cdots e_{i_r}$ of $Cl_{0,n}(\R)$ with $1\leq i_1<\cdots<i_r\leq n$ and $r\geq0$. Then one immediately verifies that $A_i$ is a matrix having only entries in $\{0,\pm1\}$ satisfying $A_i^T=-A_i$. It follows that \[(x_1^2+\ldots+x_n^2)\cdot I_N=S\cdot S^t\]for all $x_k\in\R$ where $S=x_1 A_1+\ldots+x_n A_n$ and $N$ is the dimension of $Cl_{0,n}(\R)$. Now we can choose $$Q=\begin{pmatrix}0&S\\S^t&0                                                                                                                                                                                                                                                                                                                                                                                                                                                                                                                                                                                                                                                                                                                                                                                                                                                                                                                       \end{pmatrix}
$$and we get $(x_1^2+\ldots+x_n^2)\cdot I_N=Q^2$ for all $x_k\in\R$.
  Since the entries of the $A_i$ are integers, the identity holds over every commutative ring.
\end{proof}

Putting all this together we get the following connection of sums of squares to Ulrich sheaves.

\begin{thm}\label{thm:ulrichsos}
 Let $p\in\R[x_1,\ldots,x_n]_{2e}$ be a homogeneous polynomial of degree $2e$ which is not a square. Inside the weighted projective space $\pp(e,1,\ldots,1)$ we consider the hypersurface $X$ defined by $y^2=p(x_0,\ldots,x_n)$ and the natural projection $\pi: X\to\pp^n$ onto the $x$-coordinates. Then $p$ is a sum of squares of polynomials if and only if there is a positive symmetric (or hermitian) $\pi$-Ulrich sheaf $\cF$ on $X$. In that case, if $\rank(\cF)=r$ then $p$ is a sum of $2r$ squares in the symmetric case and a sum of $4r-1$ squares in the hermitian case.
\end{thm}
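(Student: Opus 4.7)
The plan is to derive the theorem by chaining together the three auxiliary results already established in this section: Proposition \ref{prop:ulrichdetrep} and its converse Remark \ref{rem:detrepthenulrich}, which identify positive symmetric (resp.\ hermitian) Ulrich sheaves with symmetric (resp.\ hermitian) definite determinantal representations of $h^r$; Lemma \ref{lem:detrepthensos}, which extracts a sums-of-squares decomposition from such a determinantal representation; and Lemma \ref{lem:cliff}, which does the reverse by producing from a sums-of-squares decomposition a symmetric matrix identity $Q^2 = p\cdot I$. The hypothesis that $p$ is not a square ensures that $h = y^2 - p$ is irreducible in the weighted polynomial ring $S = \R[y, x_1, \ldots, x_n]$ (with $\deg y = e$, $\deg x_i = 1$), so $\pi\colon X \to \pp^n$ fits the setup of Proposition \ref{prop:ulrichdetrep} with $d = 2$, and every determinantal representation $h^r = \det(yI - A)$ has $A$ of size $2r$.

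For the \emph{only if} direction, let $\cF$ be a positive symmetric (respectively hermitian) $\pi$-Ulrich sheaf of rank $r$. Proposition \ref{prop:ulrichdetrep} yields a symmetric (resp.\ hermitian) matrix $A$ of size $2r$ whose entries are homogeneous in the $x_i$ of degree $e$ satisfying $h^r = \det(yI - A)$. Lemma \ref{lem:detrepthensos} then applies directly and shows that $p$ is a sum of $2r$ squares in the symmetric case, and a sum of $4r - 1$ squares in the hermitian case.

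For the \emph{if} direction, write $p = G_1^2 + \ldots + G_k^2$; homogeneity of $p$ forces each $G_i$ to be homogeneous of degree $e$. Lemma \ref{lem:cliff}, applied to the ring $\R[x_1, \ldots, x_n]$, yields a symmetric matrix $Q$ of some size $m$ whose entries are $\Z$-linear combinations of the $G_i$ (hence homogeneous of degree $e$) and satisfying $Q^2 = p \cdot I_m$. From the factorization $(yI - Q)(yI + Q) = h \cdot I_m$ one obtains $\det(yI - Q) \cdot \det(yI + Q) = h^m$, so by irreducibility of $h$ in the UFD $S$ together with a comparison of weighted degrees, $m$ is even, $m = 2r$, and $\det(yI - Q) = h^r$. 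Remark \ref{rem:detrepthenulrich} now upgrades $Q$ to a $\pi$-Ulrich sheaf $\cF$ on $X$ equipped with a canonical $\pi^!\cO_{\pp^n}$-valued symmetric bilinear form, coming from the isomorphism $M \to \Hom_R(M, R)$ that sends the standard basis of $M \cong R^{2r}$ to its dual basis.

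The one non-formal step is to verify positivity of this Ulrich sheaf, and this is where I expect the main (though modest) obstacle. Unwinding the identifications of Remark \ref{rem:detrepthenulrich} and Lemma \ref{lem:grothbasic}, the induced $\cO_{\pp^n}$-valued bilinear form on $\pi_*\cF \cong \cO_{\pp^n}^{2r}$ turns out to be exactly the standard Euclidean inner product, hence positive definite at every real point of $\pp^n$. Thus $\cF$ is a positive symmetric $\pi$-Ulrich sheaf, and base-changing to $\C$ provides the required positive hermitian version, completing both parts of the theorem.
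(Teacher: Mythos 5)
Your proof is correct and follows essentially the same route as the paper: both directions chain together \Cref{lem:cliff}, \Cref{rem:detrepthenulrich}, \Cref{prop:ulrichdetrep}, and \Cref{lem:detrepthensos} in the same way. The only cosmetic difference is how you extract $\det(yI - Q) = h^r$: the paper observes that $h$ is the minimal polynomial of $Q$ (since $h(Q) = 0$ and $h$ is irreducible) so the characteristic polynomial is a power of $h$, whereas you factor $\det(yI-Q)\det(yI+Q) = h^m$ and compare degrees --- both are fine, and your unwinding of the positivity via the Euclidean inner product on $\pi_*\cF \cong \cO_{\pp^n}^{2r}$ is exactly what \Cref{rem:detrepthenulrich} encodes (the closing aside about base-changing to $\C$ is unnecessary, since producing the symmetric case already suffices for that implication).
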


\begin{proof}
 First assume that $p$ is a sum of squares. By \Cref{lem:cliff} there is a symmetric square matrix $A$ of some size $m\in\N$ whose entries are homogeneous of degree $e$ in the variables $x_1,\ldots,x_n$ such that $p\cdot I_m=A^2$. Because $p$ is not a square, the polynomial $h=y^2-p$ is irreducible. Thus $A^2-p\cdot I_m=0$ shows that $h$ is the minimal polynomial of $A$. This implies that $h^r=\det(y\cdot I-A)$ for some $r>0$. Thus there is a positive symmetric $\pi$-Ulrich sheaf on $X$ by \Cref{rem:detrepthenulrich}.
 
 Now assume that there is a positive symmetric (or hermitian) $\pi$-Ulrich sheaf $\cF$ on $X$. Then by \Cref{prop:ulrichdetrep} there is a symmetric (resp. hermitian) matrix $A$ of size $2\cdot r$ whose entries are homogeneous polynomials in the $x_i$ of degree $e$ such that $h^r=\det(y\cdot I-A)$. Now the claim follows from \Cref{lem:detrepthensos}.
\end{proof}

As a special case we get the following result by Netzer and Thom \cite{timundthom}.

\begin{cor}
 Let $h\in\R[x_0,\ldots,x_n]_2$ be a quadratic hyperbolic polynomial. Then $h^r$ has a definite symmetric determinantal representation for some $r>0$.
\end{cor}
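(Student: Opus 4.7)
The plan is to reduce the corollary to \Cref{thm:ulrichsos} via a linear change of coordinates. Writing $h(x) = x^T M x$ for the symmetric matrix $M$ associated to the quadratic form $h$, I would first observe that hyperbolicity with respect to $e$ amounts to the discriminant of $h(te - v)$ in $t$ being nonnegative for every $v$, i.e., to the reverse Cauchy--Schwarz inequality $(e^T M v)^2 \geq (e^T M e)(v^T M v)$ for all $v$. This forces $M$ to have signature $(1, n)$ or $(n, 1)$.

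After possibly replacing $h$ by $-h$ (which is also hyperbolic, and for which a symmetric definite representation of $(-h)^{r'}$ yields one for $h^{2r'}$ via the block doubling $A \mapsto \diag(A, A)$, since $\det(yI - \diag(A,A)) = \det(yI - A)^2$), I may assume that $M$ has signature $(1, n)$. A real linear substitution in the coordinates then brings $h$ into the form $y^2 - (x_1^2 + \cdots + x_k^2)$ with $0 \leq k \leq n$, where I have renamed the new $0$th coordinate as $y$. Since definite symmetric determinantal representations transform covariantly under linear substitutions, this reduction loses nothing.

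The main step is a direct application of \Cref{thm:ulrichsos}, taken with weight $1$ (so that the ambient weighted projective space is just $\pp^n$), to $p := x_1^2 + \cdots + x_k^2$. Because $p$ is manifestly a sum of squares, and is not itself a square as soon as $k \geq 2$, the theorem furnishes a positive symmetric $\pi$-Ulrich sheaf of some rank $r > 0$ on $X = \cV(y^2 - p)$, where $\pi$ projects onto the $x$-coordinates. Feeding this sheaf into \Cref{prop:ulrichdetrep} produces a symmetric matrix $A$ with linear entries satisfying $h^r = \det(y \cdot I - A)$, and the matrix pencil $yI - A$ specializes to the identity at the point $(1, 0, \ldots, 0)$ in the $(y, x_1, \ldots, x_n)$ coordinates, giving the required definiteness. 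The degenerate cases $k \leq 1$ are handled by an explicit diagonal representation with $r = 1$. I do not foresee any substantial obstacle: \Cref{thm:ulrichsos} does all the real work, and the content of the corollary reduces to the classical fact that a hyperbolic quadratic form is, after a real linear change of coordinates, the difference of $y^2$ and a sum of squares.
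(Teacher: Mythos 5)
Your proof is correct and takes essentially the same approach as the paper: realize $X=\cV(h)$ as a real fibered double cover of $\pp^{n-1}$ branched along a nonnegative (hence sum-of-squares) quadratic, obtain a positive symmetric Ulrich sheaf from \Cref{thm:ulrichsos}, and feed it into \Cref{prop:ulrichdetrep}. The paper phrases this more compactly by working directly with the linear projection $\pi_e$ rather than explicitly diagonalizing $h$ and splitting into sign and rank cases, but the underlying argument is identical.
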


\begin{proof}
 If $X=\cV(h)$ is hyperbolic with respect to $e\in\pp^n$, then the linear projection $\pi_e: X\to\pp^{n-1}$ is a real fibered double cover ramified along the zero set of a nonnegative quadratic polynomial $p$. Since $p$ is a sum of squares, there is a positive symmetric $\pi_e$-Ulrich sheaf on $X$ by \Cref{thm:ulrichsos}. \Cref{prop:ulrichdetrep} implies the claim.
\end{proof}

\begin{rem}
 Positive Ulrich sheaves on \emph{reciprocal linear spaces}, i.e. the closure of the image of a linear space under coordinatewise inversion, were used in \cite{bichow} to prove that a certain polynomial associated to a hyperplane arrangement, called the \emph{entropic discriminant}, is a sum of squares. The relation of Ulrich sheaves and sums of squares used in \cite{bichow} is a generalization of one direction of \Cref{thm:ulrichsos}. Namely let $f:X\to\pp^n$ be a finite surjective real fibered morphism such that $f_*\cO_X$ is a sum of line bundles. Then, if there is a positive $f$-Ulrich sheaf, then the polynomial defining the branch locus of $f$ is a sum of squares \cite[Thm.~6.1]{bichow}.
\end{rem}

\section{Positive Ulrich sheaves of rank one on irreducible varieties}\label{sec:smooth}
In this section let $f: X\to Y$ always denote a finite surjective morphism of geometrically irreducible varieties which are proper over $\R$. We assume that $Y$ is smooth and has a real point. We further assume that the singular locus of $X$ has codimension at least two.
This allows us to speak about Weil divisors on $X$. For a given Weil divisor $D$ on $X$ we denote by $\cL(D)$ the subsheaf of $\cK_X$ consisting of all rational functions with pole and zero orders prescribed by $D$. We further denote $\ell(D)=\dim(\Gamma(X,\cL(D)))$. Let $Z\subset Y$ be a closed subset of codimension at least two such that $f^{-1}(Z)$ contains $X_{\textnormal{sing}}$. Letting $V=Y\setminus Z$ and $U=f^{-1}(V)$, the restriction $f|_U: U\to V$ is a finite surjective morphism of smooth irreducible varieties. Thus $\Delta(U/V)$ is invertible by \Cref{prop:ramdiv} and thus corresponds to a Weil divisor $R$ on $U$. Since $X\setminus U$ has codimension at least two, we can also consider $R$ as a Weil divisor on $X$ which we call the ramification divisor of $f$. \Cref{lem:codiffextension} shows that the associated subsheaf $\cL(R)$ of $\cK_X$ is exactly $\Delta(X/Y)$. All this  holds true for the complexification $X_\C$ as well. Complex conjugation gives an involution $\sigma:X_\C\to X_\C$ and for a Weil divisor $D$ on $X_\C$ we denote $\sigma(D)$ by $\overline{D}$. Using the notation of \Cref{sec:herm} we have $\cL(\overline{D})=\overline{\cL(D)}$ as subsheaves of $\cK_{X_\C}$.

\begin{rem}
 If $D_1$ and $D_2$ are Weil divisors on $X$, then we have $$\cL(D_1)\cdot\cL(D_2)\subset\cL(D_1+D_2)$$considered as subsheaves of $\cK_X$. But since $\cL(D_i)$ is not necessarily an invertible sheaf, we do not have equality in general.
\end{rem}

The relative notion of positive semidefiniteness introduced in \Cref{sec:psd} relates to the notion of being real fibered in the following way.

\begin{prop}[Thm.~5.11 in \cite{eliich}]\label{prop:posthenrf}
 The following are equivalent:
 \begin{enumerate}[(i)]
  \item  There is a coherent sheaf $\cF$ on $X$ with $\Supp(\cF)=X$ and a symmetric nonzero $f^!\cO_Y$-valued bilinear form on $\cF$ such that the induced bilinear form on $f_*\cF$ is positive semidefinite,
  \item $f$ is real fibered, i.e., $f^{-1}(Y(\R))=X(\R)$.
 \end{enumerate}
\end{prop}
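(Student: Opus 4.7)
The plan is to prove each implication separately.

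\emph{(ii) $\Rightarrow$ (i):} Take $\cF=\cO_X$. By \Cref{prop:ramdiv} the ramification divisor of $f$ is effective, so on the smooth locus of $X$ one has an inclusion $\cO_X\hookrightarrow\Delta(X/Y)\cong f^!\cO_Y$, which extends to all of $X$ by \Cref{lem:codiffextension} since the singular locus has codimension at least two. Composing with the multiplication $\cO_X\otimes\cO_X\to\cO_X$ produces a nonzero symmetric pairing whose induced bilinear form on $f_*\cO_X$ sends $a\otimes a$ to $\tr_{L/K}(a^2)$; this is nonnegative on $Y(\R)$ by \Cref{lem:tracelem} since $a^2\geq 0$ on $X(\R)$.

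\emph{(i) $\Rightarrow$ (ii):} Assume $\cF$ and $\varphi$ as in (i) and suppose, for contradiction, that $f$ is not real fibered. Since $\im(\varphi)$ is a nonzero subsheaf of the torsion-free sheaf $f^!\cO_Y\cong\Delta(X/Y)\subset\cK_X$, the form $\varphi_\xi$ at the generic point $\xi$ of $X$ is nonzero. Moreover the étale locus of $f$ meets $Y(\R)$ in a dense Euclidean-open subset on which the property ``every fiber point is real'' is clopen, so the failure of real fiberedness yields a nonempty Euclidean-open $U\subset Y(\R)$ of étale real points each admitting a non-real point in its fiber.

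The key local claim is that at an étale real $y\in U$ and a non-real $x\in f^{-1}(y)$, the local contribution $\bar\varphi_x$ to the pairing on $f_*\cF$ vanishes. Here $\kappa(x)=\C$ and $\bar B_x:=\cO_{X,x}/\fm_y\cO_{X,x}\cong\C$, so $\bar\varphi_x\colon\bar\cF_x\otimes_\C\bar\cF_x\to\Hom_\R(\C,\R)$ is $\C$-bilinear via the $\C$-structure on $\bar\cF_x$. Writing $\bar\psi_x(s,t):=\bar\varphi_x(s,t)(1)$, the identity $\bar\varphi_x(\sqrt{-1}\,s,\sqrt{-1}\,t)=-\bar\varphi_x(s,t)$ yields $\bar\psi_x(\sqrt{-1}\,s,\sqrt{-1}\,s)=-\bar\psi_x(s,s)$. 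Positive semidefiniteness of $\bar\psi_x$ then forces $\bar\psi_x\equiv 0$, and $\bar B_x$-linearity together with nondegeneracy of the trace on $\C$ yields $\bar\varphi_x\equiv 0$.

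To globalize, pass to the complexification $X_\C$. The non-real fiber points over $U$ lift to a real-analytic sheet $S\subset X_\C(\C)$ of real dimension $\dim X$ whose tangent space at every point is totally real, being pulled back via the étale $f_\C$ from the totally real tangent space to $U\subset Y(\R)\subset Y_\C$. Since $X$ is geometrically irreducible, $X_\C$ is irreducible, and the identity principle for maximally totally real submanifolds shows that $S$ is Zariski dense in $X_\C$. The pointwise vanishing $\bar\varphi_x=0$ translates to: for any local sections $s,t$ of $\cF_\C$, the section $\varphi_\C(s,t)$ of the line bundle $f_\C^!\cO_{Y_\C}$ vanishes at every point of $S$, hence identically. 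Therefore $\varphi_\C=0$, and faithful flatness of $\R\to\C$ gives $\varphi=0$, contradicting $\varphi_\xi\neq 0$. The main obstacle is precisely this globalization step, which depends on both the line-bundle nature of $f^!\cO_Y$ on the smooth locus and the Zariski density of the totally real sheet $S$ in $X_\C$.
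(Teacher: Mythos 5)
The paper does not prove this proposition itself; it quotes it as Thm.~5.11 of \cite{eliich}. So there is no internal argument to compare against, and your proposal has to be judged on its own.

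Your $(ii)\Rightarrow(i)$ is fine: the trace pairing $\cO_X\otimes\cO_X\to\Delta(X/Y)\cong f^!\cO_Y$ (using $1\in\Delta(X/Y)$, which holds because $Y$ is normal, or via \Cref{prop:ramdiv} and \Cref{lem:codiffextension} as you say) is symmetric, nonzero since $\tr_{L/K}(1)=\deg f\neq 0$, and positive semidefinite by \Cref{lem:tracelem}. Your $(i)\Rightarrow(ii)$ captures the correct key obstruction: over a real \'etale point the pairing on $\alpha_y^*(f_*\cF)$ is block-diagonal over the fibre points by $\bar B$-bilinearity, and for a non-real $x$ the block is a symmetric $\C$-bilinear form composed with evaluation at $1$; the identity $q(\sqrt{-1}\,s)=-q(s)$ forces a PSD block to vanish, and $\C$-linearity upgrades this to $\bar\varphi_x\equiv 0$. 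The globalisation through $X_\C$, the totally real sheet $S$, the line-bundle structure of $f^!\cO_Y$ over the smooth locus, and the identity principle is also a correct way to conclude $\varphi_\C=0$ and hence $\varphi=0$.

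There is, however, one genuine gap in the reduction to the \'etale locus. You assert that the failure of real fiberedness produces a nonempty Euclidean-open $U\subset Y(\R)$ of \emph{\'etale} real points with non-real fibre points, on the grounds that the property ``every fibre point is real'' is clopen on $V(\R)$, where $V$ is the \'etale locus. But clopenness on $V(\R)$ is not enough: what is needed is that $f^{-1}(V(\R))\subset X(\R)$ already implies $f^{-1}(Y(\R))\subset X(\R)$, i.e.\ that the bad behaviour cannot be concentrated entirely over the branch locus. This is true but not automatic, especially since $f$ is not assumed flat away from the \'etale locus (only $\mathrm{codim}\,X_{\mathrm{sing}}\geq 2$, not Cohen--Macaulayness, is assumed). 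One way to fill it: given $y_0\in Y(\R)$ and $x_0\in f^{-1}(y_0)$, slice $Y$ by a real curve $C$ that is smooth at $y_0$ and not contained in the branch locus, pass to the normalisation $\tilde D\to C$ of a component of $f^{-1}(C)_{\mathrm{red}}$ through $x_0$ dominating $C$, which is finite and flat between smooth curves, and use continuity of the fibres (locally $z\mapsto z^e$ near the two conjugate $\C$-points above $x_0$) to conclude that reality of the nearby fibres over $C(\R)\cap V$ forces $x_0$ itself to be real. This lemma should be stated and proved rather than absorbed into a ``so''.
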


In this situation we get the following convenient criterion for a Weil divisor to give rise to a positive Ulrich sheaf.

\begin{thm}\label{thm:weil}
 Let $f: X\to Y$ be a real fibered finite surjective morphism of geometrically irreducible varieties which are proper over $\R$. Let $Y$ be smooth and have a real point. Further let the singular locus of $X$ have codimension at least two.
 Let $R$ be the ramification divisor of $f$ and let $s$ be a rational function on $X$ which is nonnegative on $X(\R)$.
 \begin{enumerate}
  \item If $D$ is a Weil divisor on $X$ such that $2D+(s)=R$ and $\ell(D)\geq\deg(f)$, then the sheaf $\cL(D)$ on $X$ is a positive symmetric $f$-Ulrich sheaf.
  \item If $D$ is a Weil divisor on $X_\C$ that satisfies $D+\overline{D}+(s)=R$ and $\ell(D)\geq\deg(f)$, then the sheaf $\cL(D)$ on $X_\C$ is a positive hermitian $f$-Ulrich sheaf.
 \end{enumerate}
\end{thm}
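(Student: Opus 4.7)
The plan is to reduce both parts to \Cref{thm:ulrichifpsd} (respectively its hermitian version \Cref{thm:ulrichifpsdherm}) by taking $\cF=\cL(D)$. Because $X$ is irreducible, $\cL(D)$ is a torsion-free subsheaf of the constant rank-one sheaf $\cK_X$; the hypothesis $\ell(D)\geq\deg(f)\geq 1$ forces $\cL(D)\neq 0$, so $\rank\cL(D)=1$ and the dimension condition~(i) of those theorems becomes exactly $\ell(D)\geq\deg(f)$.

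The heart of the proof is to produce the pairing. From $2D+(s)=R$ one reads off the sheaf-theoretic identity $\cL(2D)=s\cdot\cL(R)$ inside $\cK_X$, since $(f)+2D\geq 0$ is equivalent to $(f/s)+R\geq 0$. Combined with the obvious inclusion $\cL(D)\cdot\cL(D)\subset\cL(2D)$ and the identification $\cL(R)=\Delta(X/Y)$ provided by \Cref{lem:codiffextension} (valid thanks to the codimension-two assumption on $X_{\mathrm{sing}}$), this yields $\cL(D)\cdot\cL(D)\subset s\cdot\Delta(X/Y)$. Now \Cref{thm:rank1} furnishes an $f^!\cO_Y$-valued symmetric pairing $(g,h)\mapsto gh/s$ on $\cL(D)$ whose pushforward to $f_*\cL(D)$ is positive semidefinite. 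Nondegeneracy at the generic point is immediate: there the pairing is just multiplication in the function field of $X$ divided by the nonzero element $s$. All the hypotheses of \Cref{thm:ulrichifpsd} are then in place, proving~(1).

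Part~(2) proceeds in parallel, after passing to $X_\C$. The involution identifies $\overline{\cL(D)}$ with $\cL(\overline{D})$, so $D+\overline{D}+(s)=R$ yields $\cL(D)\cdot\overline{\cL(D)}\subset\cL(D+\overline{D})=s\cdot\Delta(X_\C/Y_\C)$, and the pairing $(g,\overline{h})\mapsto g\overline{h}/s$ is hermitian (because $s$ is real), generically nondegenerate, and sends $(g,\overline{g})$ to $g\overline{g}/s$. This product is invariant under complex conjugation, hence lies in the real function field $L$, and is nonnegative on $X(\R)$; since $\tr_{L_\C/K_\C}$ restricts to $\tr_{L/K}$ on $L$, an application of \Cref{lem:tracelem} to $g\overline{g}/s$ shows the induced hermitian form on $f_*\cL(D)$ is positive semidefinite at every real point of $Y$, and \Cref{thm:ulrichifpsdherm} concludes.

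The only non-formal step is the sheaf-theoretic identity $\cL(2D)=s\cdot\cL(R)$ (and its hermitian analogue $\cL(D+\overline{D})=s\cdot\cL(R)$), which is the precise place where the arithmetic relation between $D$, $R$ and $(s)$ is converted into the containment in the codifferent sheaf required by \Cref{thm:rank1}; everything else is a direct application of the machinery developed in the earlier sections.
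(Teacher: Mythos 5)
Your proof is correct and takes essentially the same route as the paper: one defines the pairing $(g,h)\mapsto gh/s$ on $\cL(D)$ (valued in $\Delta(X/Y)\cong f^!\cO_Y$), checks positive semidefiniteness via \Cref{lem:tracelem} (equivalently, via \Cref{thm:rank1}), notes generic nondegeneracy, and applies \Cref{thm:ulrichifpsd} resp.\ \Cref{thm:ulrichifpsdherm}. One small point worth flagging: your computation $\cL(2D)=\cL(R-(s))=s\cdot\Delta(X/Y)$ and the resulting pairing $(g,h)\mapsto gh/s$ are the correct ones given the hypothesis $2D+(s)=R$, whereas the paper's proof writes the pairing as $(g,h)\mapsto s\cdot g\cdot h$, which is a slip (it does not land in $\Delta(X/Y)$ when $s$ has poles on $\operatorname{Supp}D$).
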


\begin{proof}
 Let $\cL=\cL(D)$ be the subsheaf of $\cK_X$ corresponding to $D$. By assumption we can define the symmetric resp. hermitian $\Delta(X/Y)$-valued bilinear form on $\cL$ that maps a pair of sections $(g,h)$ to the product $s\cdot g\cdot h$. This is clearly nondegenerate at the generic point of $X$ and it is positive semidefinite by \Cref{lem:tracelem}. Then the claim follows from \Cref{thm:ulrichifpsd} and \Cref{thm:ulrichifpsdherm} respectively.
\end{proof}

\begin{rem}\label{rem:signonclass}
 Let $D$ be a Weil divisor on $X$ or $X_\C$ such that $2D$ or $D+\overline{D}$, respectively, is linearly equivalent to $R$, i.e., they differ only by a principal divisor $(g)$. The signs that $g$ takes on real points of $X$ (up to global scaling) do only depend on the divisor class of $D$. Indeed, if $D'=D+(f)$ for some rational function $f$, then $2D'$ resp. $D+\overline{D}$ differs from $R$ by $g\cdot f^2$ or $g\cdot f\overline{f}$ respectively.
\end{rem}

\begin{ex}
 Let $L\subset\pp^n$ be a linear subspace of dimension $d<n$ that is not contained in any coordinate hyperplane. We denote by $L^{-1}$ its \emph{reciprocal}, i.e. the (Zariski closure of the) image of $L$ under the rational map $\pp^n\ratto\pp^n$ defined by coordinatewise inversion. It was shown by Varchenko \cite{Var95} that $L^{-1}$ is hyperbolic with respect to the orthogonal complement $L^\perp$ of $L$. Further it was shown in \cite{bichow} that there is a symmetric positive $f$-Ulrich sheaf of rank one on $L^{-1}$ where $f:X\to\pp^d$ is the linear projection from $L^\perp$. We want to outline how this also follows from \Cref{thm:weil}, at least for generic $L$. To this end let $L$ be the row span of a matrix $A=(a_{ij})$ of size $(d+1)\times(n+1)$ and assume that every maximal minor of $A$ is nonzero. Letting $l_j=\sum_{i=1}^{d+1}a_{ij}x_i$ for $j=1,\ldots,d+1$ we can describe $L^{-1}$ as the image of the rational map $$\psi:\pp^d\ratto\pp^n,\, x\mapsto(\frac{l_1\cdots l_{d+1}}{l_1}:\cdots:\frac{l_1\cdots l_{d+1}}{l_{d+1}}).$$ Note that $\psi$ is defined in all points where at most one of the $l_j$ vanishes. It follows from the proof of \cite[Cor.~5]{EntroDisc} and \cite[Rem.~31]{EntroDisc} that the ramification divisor $R$ of $f$ on $L^{-1}$ is the proper transform under $\psi$ of the zero set $Z\subset\pp^d$ of $$P=\sum_I \det(A_I)^2\prod_{j\in I}l_j^2$$ where the sum is taken over all $I\subset\{1,\ldots,n+1\}$ of size $n-d$ and $A_I$ denotes the submatrix of $A$ obtained from erasing all columns indexed by $I$. Let $H\subset\pp^d$ be a generic hyperplane defined by a linear form $G$ and let $D$ be the divisor on $L^{-1}$ defined as the proper transform of $H$ under $\psi$. On $\pp^d$ we have $2(n-d)H = Z + \left(\frac{P}{G^{2(n-d)}}\right)$ and our genericity assumption on $A$ implies that $P$ does not vanish entirely on any of the $\cV(l_i,l_j)\subset\pp^d$ which comprise the locus where $\psi$ is not regular. Therefore, we have $2(n-d)D=R+\left(\frac{P}{G^{2(n-d)}}\right)$ as divisors on $L^{-1}$. Clearly, the rational function $\frac{P}{G^{2(n-d)}}$ is nonnegative and $\ell((n-d)\cdot D)$ equals $\binom{n}{d}$, the number of monomials of degree $n-d$ in $d+1$ variables. Since this is also the degree of $L^{-1}$ \cite{PS06}, \Cref{thm:weil}(1) implies that $\cL((n-d)\cdot D)$ is a positive symmetric $f$-Ulrich sheaf on $L^{-1}$.
\end{ex}

The previous example leads to the following question. Let $X\subset\pp^n$ be an irreducible variety not contained in any coordinate hyperplane which is hyperbolic with respect to every linear subspace of codimension $\dim(X)+1$ all of whose Pl\"ucker coordinates are positive. Then the image $X^{-1}$ of $X$ under coordinatewise inversion is hyperbolic with respect to all these subspaces as well \cite[Prop.~1.4]{bichow}.

\begin{prob}\label{prob:reci}
 Given a symmetric positive Ulrich bundle on a variety $X\subset\pp^n$ as above. Does there exist one on $X^{-1}$ of the same rank as well?
\end{prob}

\begin{rem}
Using \cite[Prop.~3.3.8]{master} one can show that the answer to \Cref{prob:reci} is \emph{yes} for hypersurfaces. It is also true for $X$ a linear subspace by \cite{bichow}.
\end{rem}

In the case of hypersurfaces, \Cref{thm:weil} has a geometric interpretation in terms of so-called interlacers.

\begin{Def}
 Let $g,h\in\R[x_0,\ldots,x_n]$ with $d=\deg(g)+1=\deg(h)$ be hyperbolic with respect to $e$. If for all $v\in\R^{n+1}$ we have that $$a_1\leq b_1\leq a_2\leq\cdots\leq b_{d-1}\leq a_d$$ where the $a_i$ and $b_i$ are the zeros of $h(te-v)$ and $g(te-v)$ respectively, we say that $g$ \emph{interlaces} $h$, or that $g$ is an \emph{interlacer} of $h$. This definition carries over to hyperbolic hypersurfaces in the obvious way.
\end{Def}

\begin{figure}[h]
  \begin{center}
    \includegraphics[width=5cm]{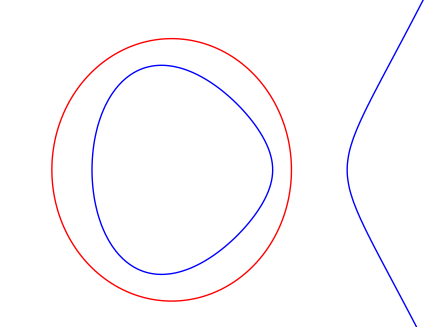}
  \end{center}
  \caption{A cubic hyperbolic plane curve (blue) interlaced by a plane hyperbolic conic (red).}
  \label{fig:interl}
\end{figure}

\begin{ex}
 If $h\in\R[x_0,\ldots,x_n]$ is hyperbolic with respect to $e$, then the directional derivative $\textrm{D}_eh$ of $h$ in direction $e$ is an interlacer of $h$. This follows from Rolle's theorem.
\end{ex}

\begin{cor}\label{cor:dixon}
 Let $h\in\R[x_0,\ldots,x_n]_d$ be hyperbolic with respect to $e$ and let $X=\cV(h)\subset\pp^n$ be the corresponding hypersurface. Assume that the singular locus of $X$ has dimension at most $n-3$. Let $g$ be an interlacer of $h$ and denote by $G$ the Weil divisor it defines on $X$.
 \begin{enumerate}
  \item Assume that $G=2D$ for some Weil divisor $D$ on $X$. If the vector space of all $p\in\R[x_0,\ldots,x_n]_{d-1}$ that vanish on $D$ has dimension at least $d$, then $h$ has a definite symmetric determinantal representation.
  \item Assume that $G=D+\overline{D}$ for some Weil divisor $D$ on $X_\C$. If the vector space of all $p\in\C[x_0,\ldots,x_n]_{d-1}$ that vanish on $D$ has dimension at least $d$, then $h$ has a definite hermitian determinantal representation.
 \end{enumerate}
\end{cor}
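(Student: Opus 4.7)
The plan is to apply \Cref{thm:weil} to produce a positive symmetric (resp.\ hermitian) $f$-Ulrich sheaf of rank one on $X$, where $f=\pi_e\colon X\to\pp^{n-1}$ is the linear projection away from $e$, and then to convert this sheaf into a definite determinantal representation of $h$ by \Cref{prop:ulrichdetrep} applied with $r=1$ (and $e=1$ in the weighted notation, after choosing coordinates with $e=[1{:}0{:}\cdots{:}0]$ so that $y=x_0$). The hypotheses of \Cref{thm:weil} are straightforward to check: $f$ is finite, surjective and of degree $d$ (since $e\notin X$), it is real fibered because $h$ is hyperbolic with respect to $e$, the target $\pp^{n-1}$ is smooth with real points, and the hypothesis on $X_{\mathrm{sing}}$ is exactly the codimension-two assumption (and also forces $X$ to be geometrically irreducible).

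Next I would identify the remaining data in \Cref{thm:weil}. The ramification divisor $R$ of $f$ is the Weil divisor on $X$ cut out by $\textrm{D}_e h$: on the smooth locus of $X$ ramification at a point $p$ is equivalent to the line $\overline{ep}$ meeting $X$ at $p$ with multiplicity at least two, i.e.\ $\textrm{D}_e h(p)=0$, and this extends over $X_{\mathrm{sing}}$ since the latter has codimension at least two. Since $g$ and $\textrm{D}_e h$ are both nonzero global sections of $\cO_X(d-1)$, the quotient $s:=\textrm{D}_e h/g\in\cK_X$ is a well-defined rational function with $(s)=R-G$. The main technical step is to show that, after replacing $g$ by $-g$ if necessary, $s$ is nonnegative on $X(\R)$: for a real $v\in\R^{n+1}$ the fiber $\pi_e^{-1}([v])$ consists of the points $[a_i e - v]$ where $a_1\leq\dots\leq a_d$ are the real roots of the univariate polynomial $h(te-v)$, and the values $\textrm{D}_e h(a_i e-v)=h'(a_i)$ alternate in sign; by the interlacing hypothesis the values $g(a_i e-v)$ alternate in the same pattern (up to a global sign independent of $v$), so their quotient has constant sign on $X(\R)$.

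With $s\geq 0$ in hand, the equation $(s)=R-G$ becomes $2D+(s)=R$ in case (1) and $D+\overline{D}+(s)=R$ in case (2). Finally, multiplication by $g$ induces a $K$-linear isomorphism
\[
H^0(X,\cL(D))\;\xrightarrow{\,\cdot g\,}\;\{\,p\in K[x_0,\dots,x_n]_{d-1}:(p)|_X\geq D\,\},
\]
with $K=\R$ in case (1) and $K=\C$ in case (2): a rational function $q$ on $X$ satisfies $(q)\geq -D$ exactly when $(qg)|_X=(q)+G$ is effective and dominates $D$, and conversely every polynomial $p$ of degree $d-1$ vanishing along $D$ yields $p/g\in H^0(X,\cL(D))$, using that no nonzero polynomial of degree $d-1$ is divisible by $h$. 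By hypothesis the right-hand side has dimension at least $d=\deg(f)$, so \Cref{thm:weil}(1) respectively \Cref{thm:weil}(2) yields that $\cL(D)$ is a positive symmetric (resp.\ hermitian) $f$-Ulrich sheaf of rank one, and \Cref{prop:ulrichdetrep} then produces a symmetric (resp.\ hermitian) matrix $A$ of size $d$ with linear entries in $x_1,\dots,x_n$ satisfying $h=\det(x_0 I-A)$, which is definite at $e$. The hardest part is the sign argument for $s$ on $X(\R)$ via the interlacing pattern; the remainder is bookkeeping with divisors and a direct invocation of the machinery already developed.
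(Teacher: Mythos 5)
Your proposal is correct and follows essentially the same route as the paper: identify the ramification divisor of $\pi_e$ as the zero divisor of $\textrm{D}_eh$, use the interlacing hypothesis to see that $\textrm{D}_eh/g$ is nonnegative on $X(\R)$, and then invoke \Cref{thm:weil} and \Cref{prop:ulrichdetrep}. The only difference is that you sketch the sign-alternation argument for $\textrm{D}_eh/g$ directly (the paper cites this as a known lemma on interlacers) and you spell out the identification of $H^0(X,\cL(D))$ with the forms of degree $d-1$ vanishing on $D$, which the paper leaves implicit; both additions are correct.
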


\begin{proof}
 The ramification divisor of the linear projection with center $e$ is the zero set of the directional derivative $\textrm{D}_eh$ on $X$. Since $g$ is an interlacer, we have that the rational function $\frac{\textrm{D}_eh}{g}$ is nonnegative on $X(\R)$ by \cite[Lem.~2.4]{interl}. Then the claim follows from \Cref{thm:weil} and \Cref{prop:ulrichdetrep} by our dimensional assumption. 
\end{proof}

\begin{rem}
 When $n=2$, the case of plane curves, the dimensional condition in \Cref{cor:dixon} is automatically satisfied by Riemann--Roch. If $g=\textrm{D}_eh$, then $G=D+\overline{D}$ by \Cref{thm:unramified}. In that case this recovers the result from \cite[\S 4]{hvelemt}.
 Note that the proof in \cite{hvelemt} does not seem to generalize to higher dimensions as it uses Max Noether's AF+BG theorem. 
 It would be interesting to know when we actually need the dimensional condition in the case of higher dimensional hypersurfaces.
\end{rem}

\begin{prob}
  Find hyperbolic hypersurfaces $X\subset\pp^n$ for which there is an interlacer whose zero divisor on $X$ is of the form $D+\overline{D}$ with $\ell(D)<\deg(X)$.
\end{prob}

\begin{ex}
 Let $h\in\R[x_0,\ldots,x_n]_n$ be the elementary symmetric polynomial of degree $n$ in $x_0,\ldots,x_n$. It is hyperbolic with respect to every point in the positive orthant and $g=\frac{\partial}{\partial x_0}h$ is an interlacer. The zero divisor of $g$ on the hypersurface $X=\cV(h)\subset\pp^n$ is of the form $2D$ where $D=\sum_{1\leq i<j\leq n} L_{ij}$. Here $L_{ij}$ is the linear subspace $\cV(x_i,x_j)\subset X$. Each monomial $\frac{x_1\cdots x_n}{x_i}$ for $1\leq i\leq n$ vanishes on $D$. Thus by \Cref{cor:dixon} the polynomial $h$ has a definite symmetric determinantal representation. This was known before and follows e.g. from the more general result that the bases generating polynomial of a regular matroid (in our case $U_{n,n+1}$) has a definite symmetric determinantal representation, see  \cite[\S 8.2]{COS04}.
\end{ex}

\section{Smooth curves}\label{sec:curves}
In this section let $f: X\to Y$ be a real fibered morphism between smooth irreducible curves that are projective over $\R$ with $Y(\R)$ Zariski dense in $Y$. Let $R$ be the ramification divisor of $f$. We want to apply \Cref{thm:weil}.

\begin{lem}\label{lem:narrowsquare}
 There is a divisor $M$ on $X$ and a nonnegative $s$ in the function field of $X$ such that $R+(s)=2M$.
\end{lem}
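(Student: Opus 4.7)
The plan is to exploit the fact that $R$ is supported only on non-real closed points of $X$ in order to extract a ``nonnegative square root'' of $R$ modulo linear equivalence. First, by \Cref{thm:unramified}, the real fibered morphism $f$ is unramified over $X(\R)$, so the support of $R$ will consist entirely of closed points $P$ of $X$ with residue field $\kappa(P) = \C$. Second, Riemann--Hurwitz gives $\deg R = 2g_X - 2 - \deg(f)(2g_Y - 2)$, which is an even integer.

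The next step is to show that $[R]$ lies in $2\,\mathrm{Pic}(X)$. To that end I will pass to the complexification $X_\C$, where $R|_{X_\C}$ decomposes as $D + \sigma(D)$ by choosing one of the two conjugate points above each closed point of $\mathrm{supp}(R)$. Using the divisibility of the Jacobian of $X_\C$ as a complex abelian variety, together with a Galois descent argument enabled by the existence of a real point on $X$ (which follows from $Y(\R)$ being Zariski dense and $f$ being real fibered and surjective), I will produce a Weil divisor $M$ on $X$ and a rational function $s \in \R(X)^*$ satisfying $R + (s) = 2M$.

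The final step is to arrange $s$ to be nonnegative on $X(\R)$. Since the support of $R$ is disjoint from $X(\R)$, the divisor $(s) = 2M - R$ has even valuation at every real point of $X$, so $s$ has a well-defined constant sign on each connected component of $X(\R)$. The plan is then to modify $M$ by a divisor $F$ with $2F$ principal, say $2F = (k)$: replacing $M$ by $M + F$ changes $s$ by a factor of $k$, whose signs on the components of $X(\R)$ are governed by the class of $F$. By appealing to the structure of the real Picard group of a smooth projective real curve with $X(\R) \neq \emptyset$ and the resulting surjectivity properties of the sign map on functions with 2-divisible divisor, one can prescribe the necessary sign correction so that $s$ becomes positive on every component of $X(\R)$, hence nonnegative.

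The hard part will be establishing the 2-divisibility of $[R]$ in $\mathrm{Pic}(X)$, which requires a careful Galois-cohomological descent from $\mathrm{Pic}(X_\C)$ relying on divisibility of the complex Jacobian together with the vanishing of the Brauer obstruction in the presence of a real point, combined with the concluding sign adjustment that leverages the component structure of $X(\R)$ and the existence of rational functions with the appropriate sign patterns and 2-divisible divisors.
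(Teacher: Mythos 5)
Your high-level plan overlaps with the paper's proof at several points — passing to $X_\C$, using that $R$ has no real support (via \Cref{thm:unramified}), invoking divisibility of $\mathrm{Pic}^0(X_\C)$, and exploiting $X(\R)\neq\emptyset$ — but the way you have organized the two tasks (first show $[R]\in 2\,\mathrm{Pic}(X)$, then repair the sign of $s$) introduces a gap that the paper's organization avoids.

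The paper does not perform an abstract Galois-cohomological descent followed by a sign correction. Instead it writes $R'=Q+\overline Q$ on $X_\C$, picks any $P$ with $P=\overline P$, halves $Q-nP=2N+(g)$ in $\mathrm{Div}(X_\C)$ using divisibility of $\mathrm{Pic}^0(X_\C)$, and then \emph{symmetrizes}: adding the conjugate relation gives $R'=2(N+\overline N+nP)+(g\overline g)$. The divisor $N+\overline N+nP$ is $\sigma$-invariant by construction and so descends to $M$ on $X$ without any appeal to $H^1$ or Brauer groups, and the function $s=g\overline g$ is a norm, i.e.\ a sum of two squares in $\R(X)$, hence automatically nonnegative on $X(\R)$. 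The nonnegativity is free; there is nothing left to ``arrange''.

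The gap in your version is the final sign-adjustment step. You correctly observe that replacing $M$ by $M+(h)$ only multiplies $s$ by $h^2$, so you must instead twist by a $k$ whose divisor is $2F$ with $F$ not principal, and you then assert that ``one can prescribe the necessary sign correction'' by appealing to ``surjectivity properties of the sign map on functions with 2-divisible divisor.'' That surjectivity is not obvious and is not something you can simply invoke: the image of the sign map on $\{k\in\R(X)^*:(k)\in 2\,\mathrm{Div}(X)\}$ inside $\{\pm1\}^{\pi_0(X(\R))}$ is a subgroup governed by $\mathrm{Pic}(X)[2]$ and the topology of $X(\R)$, and there is no reason a priori that the sign pattern of the $s$ you obtain from an abstract descent lands in that subgroup. (A posteriori it does — precisely because the lemma is true — but that is circular.) To close this gap you would either have to prove a sign-prescription theorem for real curves, or track your descent carefully enough to control the sign of $s$, at which point you are essentially reproducing the $g\overline g$ computation. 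The paper's choice to halve $Q-nP$ rather than $R$ directly, and to build $s$ as a norm, is exactly the device that makes the nonnegativity automatic; I'd recommend restructuring your argument around it.
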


\begin{proof}
 The proof is a projective version of the proof of \cite[Cor.~4.2]{christoph}. By \Cref{thm:unramified} we have that $f$ is unramified at real points. Thus $R$, considered as a Weil divisor, is a sum of nonreal points. Therefore, the Weil divisor $R'$ that we obtain on the complexification $X_\C=X\times_\R \C$ of $X$ is of the form $R'=Q+\overline{Q}$ where $Q$ is some effective divisor and $\overline{Q}$ its complex conjugate. Since the group $\textnormal{Pic}^0(X_\C)$ is divisible, there is a $g$ in the function field of $X_\C$ and a divisor $N$ on $X_\C$ such that $Q-nP=2N+(g)$ where $n=\deg(Q)$ and $P$ is any point on $X_\C$ with $P=\overline{P}$. Thus $(Q-nP)+(\overline{Q-nP})=2(N+\overline{N})+(g\cdot\overline{g})$ which implies $$R'=Q+\overline{Q}=2(N+\overline{N}+nP)+(g\cdot\overline{g}).$$Since $N+\overline{N}+nP$ is fixed by conjugation, it descends to a divisor $M$ on $X$. The function $s=g\cdot\overline{g}$ is a sum of two squares and thus nonnegative.
\end{proof}

From this we get the following theorem.

 \begin{thm}\label{thm:curvetop1}
 Let $X$ be a smooth irreducible curve that is projective over $\R$. For every real fibered $f:X\to\pp^1$ there is a positive symmetric $f$-Ulrich line bundle.
\end{thm}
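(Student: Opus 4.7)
My plan is to apply \Cref{thm:weil}(1) with the divisor $M$ produced by \Cref{lem:narrowsquare}, and to verify the required dimension bound $\ell(M)\geq\deg(f)$ using Riemann--Roch.

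First I would check that the hypotheses of \Cref{thm:weil} are satisfied. The target $\pp^1$ is smooth with real points. Since $f$ is surjective, real fibered, and $\pp^1(\R)$ is Zariski dense in $\pp^1$, the locus $X(\R)=f^{-1}(\pp^1(\R))$ is Zariski dense in $X$; in particular $X$ has a real point. For a smooth irreducible projective curve over $\R$, the existence of a real point forces geometric irreducibility, because otherwise $X_\C$ would split as a disjoint union of two complex-conjugate components with no fixed points of the involution. The singular locus of $X$ is empty, $f$ is finite, surjective, and real fibered, so all the required hypotheses on $X$, $Y$ and $f$ are in place.

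Next, \Cref{lem:narrowsquare} supplies a Weil divisor $M$ on $X$ and a rational function $s$ on $X$, nonnegative on $X(\R)$, with $R+(s)=2M$. Rewriting this as $2M+(1/s)=R$, and noting that $1/s$ is again nonnegative on $X(\R)$ wherever defined (since $s\geq 0$), I obtain the linear equivalence with the positivity witness required by \Cref{thm:weil}(1), taking $D:=M$.

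The crux is to verify $\ell(M)\geq\deg(f)=:d$. By the Riemann--Hurwitz formula applied to $f\colon X\to\pp^1$, one has $\deg(R)=2g-2+2d$, where $g$ is the genus of $X$; hence $\deg(M)=g-1+d$. Riemann--Roch then gives
\[
  \ell(M)\;\geq\;\deg(M)+1-g\;=\;d,
\]
which is exactly the bound I need. Since $X$ is smooth, $\cL(M)$ is a line bundle, and \Cref{thm:weil}(1) produces the desired positive symmetric $f$-Ulrich line bundle.

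I do not foresee a genuine obstacle: the substantive work has already been done, with the divisibility of $\mathrm{Pic}^0(X_\C)$ hidden in \Cref{lem:narrowsquare} and the rest carried out by \Cref{thm:weil}. The only point worth flagging is the happy numerical coincidence that the degree of $M$ forced by Riemann--Hurwitz matches the Riemann--Roch threshold exactly, so that the argument goes through uniformly for every real fibered $f\colon X\to\pp^1$ without further hypotheses.
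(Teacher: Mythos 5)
Your argument is exactly the paper's: invoke \Cref{lem:narrowsquare} for the divisor $M$, feed it into \Cref{thm:weil}(1), and verify the bound $\ell(M)\geq\deg(f)$ by combining Riemann--Hurwitz (to compute $\deg(M)=\deg(f)+g-1$) with Riemann--Roch. The only differences are cosmetic — you are slightly more explicit about the hypothesis-checking (Zariski density of $X(\R)$, geometric irreducibility) and about the sign flip between $R+(s)=2M$ in \Cref{lem:narrowsquare} and $2D+(s)=R$ in \Cref{thm:weil}(1) — and both are handled correctly.
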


\begin{proof}
 Let  $M$ be the divisor from \Cref{lem:narrowsquare}. By \Cref{thm:weil} we have to show that $\ell(M)\geq\deg(f)$. By Hurwitz's Theorem we have that $2g-2=\deg(R)-2\deg(f)$ where $g$ is the genus of $X$. Thus $\deg(M)=\deg(f)+g-1$ and by Riemann--Roch \[\ell(M)\geq \deg(M)-g+1=\deg(f).\qedhere\]
\end{proof}

\begin{cor}[Thm.~7.2 in \cite{sha14}]
 The Chow form of every hyperbolic curve $X\subset\pp^n$ has a symmetric and definite determinantal representation.
\end{cor}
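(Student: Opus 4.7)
The plan is to manufacture a positive symmetric Ulrich line bundle on $X$ via Theorem \ref{thm:curvetop1} and then transport it to a symmetric definite determinantal representation of the Chow form, using the higher--codimension extension of Proposition \ref{prop:ulrichdetrep} recorded in \cite[Thm.~5.7]{eliich} and \cite{sha14}.

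First I would reduce to the case in which $X$ is smooth and geometrically irreducible. By definition of hyperbolicity, the irreducible components of $X$ are individually hyperbolic with respect to the same linear subspace $E$, and by Remark \ref{rem:ulrichconnected} a positive symmetric Ulrich sheaf on a disjoint union is obtained by taking a disjoint union of positive symmetric Ulrich sheaves on the components; on the determinantal side this corresponds to taking block-diagonal direct sums, so it suffices to treat one irreducible component at a time. Since the Chow form depends only on the cycle structure and the composition of $\pi_E$ with the normalization map $\nu\colon\tilde X\to X$ is again finite, surjective, and real fibered, we may moreover replace $X$ by $\tilde X$ and hence assume $X$ is smooth.

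Second, hyperbolicity provides a codimension-$2$ linear subspace $E\subset\pp^n$ with $E\cap X=\emptyset$ such that the linear projection $\pi_E\colon X\to\pp^1$ is real fibered. Since $\pp^1(\R)$ is Zariski dense in $\pp^1$ and $X$ is a smooth irreducible projective curve over $\R$, Theorem \ref{thm:curvetop1} applies and yields a positive symmetric $\pi_E$-Ulrich line bundle $\cL$ on $X$; its construction rests on Lemma \ref{lem:narrowsquare} (producing a half of the ramification divisor modulo a nonnegative principal divisor) combined with the Riemann--Roch inequality $\ell(M)\geq\deg(\pi_E)$ needed to invoke Theorem \ref{thm:weil}.

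Finally, I would translate $\cL$ into a symmetric definite determinantal representation of the Chow form of $X$. Imitating the proof of Proposition \ref{prop:ulrichdetrep}, one considers the graded module $M=\bigoplus_{i\in\Z} H^0(X,\cL(i))$ of twisted global sections: because $\cL$ is $\pi_E$-Ulrich, $M$ is a Cohen--Macaulay module over $\R[x_0,\ldots,x_n]$ with a linear minimal free resolution, giving a pencil of square matrices of linear forms whose determinant cuts out the Chow form of $X$ (seen as a hypersurface in the Grassmannian of lines); the positive symmetric pairing on $\cL$ descends to a symmetric self-duality of $M$, and positivity on a real point $E$ in the hyperbolicity cone upgrades this pencil to a definite symmetric one. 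The main obstacle is this last step: for hypersurfaces it is the content of Proposition \ref{prop:ulrichdetrep}, but in higher codimension one must use the Livsic-type determinantal representations of \cite{sha14} together with the fact (\cite[Thm.~5.7]{eliich}) that these are precisely what positive symmetric Ulrich sheaves provide; this is invoked from the literature rather than re-derived in the present excerpt.
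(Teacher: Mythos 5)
Your proposal matches the paper's proof in substance: both reduce to the smooth case via normalization, apply Theorem \ref{thm:curvetop1} to the linear projection $\pi_E\colon X\to\pp^1$ to obtain a positive symmetric Ulrich line bundle, and then invoke \cite[Thm.~5.7]{eliich} (the paper additionally cites \cite[Rem.~4.4]{eliich}) to convert this into a symmetric definite determinantal representation of the Chow form. The paper states this in three sentences; your added remarks about handling irreducible components by block-diagonal sums and about geometric irreducibility are correct elaborations of what the paper leaves implicit, not a genuinely different argument.
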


\begin{proof}
 By \cite[Thm.~5.7]{eliich} and \cite[Rem.~4.4]{eliich} it suffices to show that there is a positive Ulrich bundle of rank $1$ on $X$. But this follows from the preceding theorem applied to the linear projection from an $n-2$-space of hyperbolicity if $X$ is smooth. Otherwise we can pass to the normalization of $X$.
\end{proof}

\begin{cor}[Helton--Vinnikov Theorem \cite{HV07}]
 Every hyperbolic polynomial in three variables has a definite determinantal representation.
\end{cor}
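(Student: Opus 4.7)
The plan is to deduce this from Theorem~\ref{thm:curvetop1} and Proposition~\ref{prop:ulrichdetrep}, using normalization to handle singular curves. First, factor $h$ into irreducible components, each of which remains hyperbolic with respect to $e$; since block-diagonal sums of symmetric definite determinantal representations are again symmetric and definite, this reduces the problem to the case that $h$ is irreducible. After a linear change of coordinates and rescaling, assume $e=[1:0:0]$ and $h(e)=1$. The plane curve $X=\cV(h)\subset\pp^2$ is then hyperbolic with respect to $e$, so the linear projection $\pi_e\colon X\to\pp^1$ with center $e$ is finite, surjective, of degree $d=\deg h$, and real fibered.

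If $X$ is smooth, Theorem~\ref{thm:curvetop1} applied to $\pi_e$ yields a positive symmetric $\pi_e$-Ulrich line bundle $\cL$ on $X$. Proposition~\ref{prop:ulrichdetrep} with $r=1$ then produces a symmetric $d\times d$ matrix $A$ with entries linear in $x_1,x_2$ satisfying $h=\det(x_0 I-A)$. Writing $A=-x_1 B_1-x_2 B_2$ exhibits $h=\det(x_0 I+x_1 B_1+x_2 B_2)$ as a symmetric determinantal representation which is definite at $e$, since the identity matrix is positive definite.

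In the singular case, pass to the normalization $\nu\colon\tilde X\to X$, which is a smooth irreducible projective curve. The composition $\pi:=\pi_e\circ\nu$ is still real fibered, so Theorem~\ref{thm:curvetop1} supplies a positive symmetric $\pi$-Ulrich line bundle $\cL$ on $\tilde X$. Set $\cF:=\nu_*\cL$; since $\nu$ is birational, $\cF$ has rank $1$, and $(\pi_e)_*\cF=\pi_*\cL\cong\cO_{\pp^1}^d$ shows that $\cF$ is $\pi_e$-Ulrich. Using $(\pi_e\circ\nu)^!\cO_{\pp^1}=\nu^!\pi_e^!\cO_{\pp^1}$ together with the adjunction of Lemma~\ref{lem:grothbasic}, the symmetric pairing on $\cL$ with values in $\pi^!\cO_{\pp^1}$ corresponds to a symmetric pairing on $\cF$ with values in $\pi_e^!\cO_{\pp^1}$, and the induced $\cO_{\pp^1}$-valued pairing on $(\pi_e)_*\cF$ coincides with the positive semidefinite pairing already present on $\pi_*\cL$. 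Hence $\cF$ is a positive symmetric $\pi_e$-Ulrich sheaf of rank $1$ on $X$, and Proposition~\ref{prop:ulrichdetrep} finishes as in the smooth case.

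The main obstacle is bookkeeping in the singular case: descending the positive symmetric Ulrich structure from $\tilde X$ to $X$ through $\nu$ and verifying that positivity, nondegeneracy at the generic point, and the Ulrich condition survive the pushforward. Given the Grothendieck adjunction of Lemma~\ref{lem:grothbasic} and the existence statement of Theorem~\ref{thm:curvetop1}, this verification is essentially formal, and no further ingredient beyond the general machinery already developed in the paper is needed.
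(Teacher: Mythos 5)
Your proof is correct and follows the same route as the paper: apply Theorem~\ref{thm:curvetop1} to the real fibered linear projection $\pi_e$ (passing to the normalization when $X$ is singular) and convert the resulting positive symmetric Ulrich sheaf into a definite symmetric determinantal representation. The paper packages this as an immediate consequence of the preceding corollary on Chow forms of hyperbolic curves, which is proved by exactly these two steps (citing \cite{eliich} in place of Proposition~\ref{prop:ulrichdetrep} for the conversion), whereas you unpack it directly and make explicit the reduction to irreducible $h$ and the descent of the Ulrich structure through $\nu_*$.
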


\begin{ex}
 If the target is not $\pp^1$ as in \Cref{thm:curvetop1}, then there is in general no (positive symmetric) $f$-Ulrich sheaf on $X$. This fails already in the next easiest case, namely when $X$ and $Y$ both have genus one. Indeed, let $f:X\to Y$ be an unramified and real fibered double cover of an elliptic curve $Y$. Such maps exists, see for example \cite[Lem.~6.5]{kummer2020hyperbolic}, and by Riemann--Hurwitz $X$ is an elliptic curve as well. We claim that in this case there is actually no $f$-Ulrich sheaf at all. Indeed, there is a line bundle $\cL$ on $Y$ which is nontrivial and $2$-torsion such that $f_*\cO_X=\cO_Y\oplus\cL$. Then we have $f^*\cL=\cO_X$ and the projection formula implies that $f_*\cF=\cL\otimes f_*\cF$ for all coherent sheaves $\cF$ on $X$. This excludes $f_*\cF=\cO_Y^r$.
\end{ex}

\section{Del Pezzo Surfaces}\label{sec:delpezzo}
Recall that a \emph{del Pezzo surface} is a smooth projective and geometrically irreducible surface whose anticanonical class is ample. We are interested in morphisms $f: X\to\pp^2$ where $X$ is a del Pezzo surface and the pullback $f^*\cO_{\pp^2}(1)$ is the anticanonical line bundle. It was shown in \cite[Prop.~4.1]{ulrichintro} that for such $f$ there exist $f$-Ulrich line bundles. We will show that if $f$ is real fibered, then there are even positive hermitian $f$-Ulrich line bundles. An introduction to the classical theory of del Pezzo surfaces can be found for example in \cite[Ch.~8]{cag} or \cite[\S3.5]{nearly}. This section further relies on the classification of real del Pezzo surfaces \cite{russo}.

\begin{Def}
 The \emph{degree} of a del Pezzo surface $X$ is the self-intersection number $K_X.K_X$ of its canonical class $K_X$. A \emph{line} on $X$ is an irreducible curve $L\subset X$ such that $L.L=L.K_X=-1$.
\end{Def}

\begin{rem}
 Note that if the anticanonical class $-K_X$ of a del Pezzo surface $X$ is very ample, then a line $L$ on $X$ is mapped by the associated embedding to an actual line, i.e. a linear subspace of dimension one because $L.K_X=-1$.
\end{rem}

\begin{ex}\label{ex:pezzo}
 These are examples of del Pezzo surfaces \cite[Thm.~3.36(7)]{nearly}:
 \begin{enumerate}
  \item A smooth hypersurface of degree four in the weighted projective space $\pp(2,1,1,1)$ is a del Pezzo surface of degree two.
  \item A smooth cubic hypersurface in $\pp^3$ is a del Pezzo surface of degree three.
  \item A smooth complete intersection of two quadrics in $\pp^4$ is a del Pezzo surface of degree four.
 \end{enumerate}
 Furthermore, in case (1) the (complete) anticanonical linear system corresponds to the projection $\pp(2,1,1,1)\dashrightarrow\pp^2$ restricted to our surface. Moreover, in the cases (2) and (3) the embeddings of the surfaces to $\pp^3$ and $\pp^4$ respectively correspond to the (complete) anticanonical linear system. These statements are for example shown in the course of the proof of \cite[Thm.~3.36]{nearly}.
\end{ex}

\begin{rem}\label{rem:blowup}
 If $X$ is a del Pezzo surface over an algebraically closed field, then $X$ is isomorphic to either $\pp^1\times\pp^1$ or a blowup of $\pp^2$ in $n\leq8$ points, see \cite[Ex.~3.56]{nearly} or \cite[Cor.~8.1.17, Prop.~8.1.25]{cag}. A straight-forward calculation shows that the degree of $X$ is eight in the former and $9-n$ in the latter case.
\end{rem}

\begin{lem}\label{lem:quarticsurface}
 Let $X\subset\pp^4$ be a smooth complete intersection of two quadrics in $\pp^4$ such that $X(\R)$ is homeomorphic to the disjoint union of two spheres. Then:
 \begin{enumerate}
  \item $X$ is contained in exactly five real singular quadrics;
  \item one of these five quadrics has signature $(2,2)$ and the other four have signature $(3,1)$;
  \item for exactly two of these singular quadrics the linear projection from its vertex realizes $X$ as a real fibered double cover of a hyperbolic quadratic hypersurface $Q\subset\pp^3$.
 \end{enumerate}
\end{lem}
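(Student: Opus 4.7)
The proof will be organised around the pencil $\{Q_t\}_{t\in\pp^1}$ of quadrics containing $X$. Since $X$ is a smooth complete intersection of two quadrics in $\pp^4$, the discriminant $\det Q_t$ is a binary form of degree $5$ with $5$ distinct zeros, each corresponding to a singular member of corank one. For part~(1) I would invoke the classification of real del Pezzo surfaces of degree $4$ (Russo, \cite{russo}): the maximal topological type $X(\R)\cong\sph^2\sqcup\sph^2$ occurs precisely when all five singular members of the pencil are real.

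For~(2), parametrise the pencil by $\R\pp^1$ and track the unordered signature of $Q_t$ on the five open arcs between singular values; each singular quadric has signature $\{p,q,1\}$ with $p+q=4$. The signature $\{4,0,1\}$ is excluded because an adjacent smooth quadric would then have signature $\{5,0\}$ or $\{4,1\}$, forcing some $Q_t(\R)=\emptyset$ and hence $X(\R)=\emptyset$, contrary to assumption. The topology $X(\R)\cong\sph^2\sqcup\sph^2$ then forces the smooth signatures on the five arcs to be $\{3,2\},\{4,1\},\{3,2\},\{4,1\},\{3,2\}$ cyclically: four transitions are between different signatures (each a $\{3,1,1\}$-cone) and the remaining one joins two arcs of the same signature $\{3,2\}$, which forces it to be a $\{2,2,1\}$-cone.

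For~(3) I would first dispose of the $\{2,2,1\}$-cone: its base $\widetilde Q\subset\pp^3$ has signature $\{2,2\}$, whose real locus is a torus, so $\widetilde Q$ is not a hyperbolic hypersurface. For each of the four $\{3,1,1\}$-cones the base $\widetilde Q\subset\pp^3$ has signature $\{3,1\}$, so $\widetilde Q(\R)\cong\sph^2$ and $\widetilde Q$ is hyperbolic. The projection $\pi_v\colon X\to\widetilde Q$ from the vertex $v$ is a double cover, real fibered iff every real line $\ell\subset Q$ through $v$ meets $X$ in real points only---equivalently, iff the covering involution $\sigma_v\in\operatorname{Aut}(X)$ interchanges the two sphere components of $X(\R)$, so that the cover is \'etale over $\widetilde Q(\R)$.

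The main obstacle is the final count that exactly two of the four $\{3,1,1\}$-vertices give real fibered projections. I would establish it by an explicit calculation in a normal form such as $Q_1=\diag(1,1,1,-1,-1)$ and $Q_2=\diag(a_0,\ldots,a_4)$ with the $a_i$ chosen so that $X(\R)\cong\sph^2\sqcup\sph^2$: for the $(3,1,1)$-cone with vertex the $i$-th standard basis vector $e_i$, substituting the equation of $\widetilde Q_i$ into the discriminant of the restriction of a second pencil member to the lines through $e_i$ reduces the real-fiberedness condition to the nonnegativity on $\R^4$ of an explicit quadratic form, which is positive semidefinite for precisely two of the four indices~$i$. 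A more conceptual route would identify the five covering involutions with reflections in certain roots of the lattice $D_5\cong K_X^\perp\subset\operatorname{Pic}(X)$ and track the induced action on $\pi_0(X(\R))$ using compatibility with complex conjugation.
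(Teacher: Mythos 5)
Your proposal takes a genuinely different route from the paper's. The paper works entirely through the blowup description of $X_\C$ as $\pp^2$ blown up in five points, tracks the action of complex conjugation on the sixteen lines, and packages the five singular quadrics as conic-bundle structures (the divisor classes $A_i$, $B_i$, $D_j$); real-fiberedness of a projection is then decided by intersection theory — namely by checking whether two specific pairs of conjugate lines meet on the same or different components of $X(\R)$. You instead analyse the real pencil $\{Q_t\}_{t\in\R\pp^1}$ directly, tracking signatures across the five singular fibres. The signature-tracking viewpoint is attractive and more directly tied to the quadratic-forms side of the problem, but as written there are two real gaps.

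First, the pivotal deduction in part (2) — that the topological type $X(\R)\cong\sph^2\sqcup\sph^2$ forces the signature pattern $\{3,2\},\{4,1\},\{3,2\},\{4,1\},\{3,2\}$ around the circle — is asserted without argument. A priori there are several signature patterns consistent with your parity constraint (e.g.\ all five arcs of signature $\{3,2\}$ with all five cones of signature $(2,2,1)$), and ruling them out requires an explicit dictionary between the index function of the pencil and the topology of the real locus, which is a nontrivial theorem in its own right (in the spirit of Krasnov or Agrachev--Lerario). You would need either to cite such a result precisely or to derive the relevant case. The paper sidesteps this entirely by reading the five cones off the blowup model, where the combinatorics of the $16$ lines and the conjugation action (from Russo's classification) determine everything.

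Second, and more seriously, part (3) is where the actual content of the lemma lives, and your proposal leaves the decisive count — exactly two of the four $(3,1,1)$-cones give real fibered projections — as a task to be carried out ("an explicit calculation in a normal form" or "a more conceptual route" via $D_5$ root reflections), with neither alternative executed. It is precisely this step where the paper's conic-bundle formalism earns its keep: the four candidate double covers are encoded by the divisors $D_j=E_0+l_{0j}$, and the check reduces to a one-line intersection computation $D_j\cdot\overline{D_j}=L_1\cdot\overline{L_1}+L_j'\cdot\overline{L_j'}$ together with the observation that each conic bundle has its two real singular fibres split one on each component of $X(\R)$. Without a concrete substitute for this, your argument does not yet establish the lemma.
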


\begin{proof}
 The complex pencil $\lambda q_0+\mu q_1$ contains five singular quadrics as the $q_i$ can be represented by symmetric $5\times5$ matrices. We will show that all of them are real. To this end recall that by \Cref{rem:blowup} the complexification $X_\C$ of $X$ is isomorphic to the blowup of $\pp^2$ at five points $p_0,\ldots,p_4$. The 16 lines on $X_\C$ correspond to the exceptional divisors $E_0,\ldots,E_4$, the lines $l_{ij}$ through $p_i$ and $p_j$ for $0\leq i<j\leq4$ and the conic $C$ through all five points $p_0,\ldots,p_4$, see for example \cite[Thm.~26.2]{manin}. After relabeling if necessary, the complex conjugation on $X_\C$ interchanges $E_0$ with $C$, $E_i$ with $l_{0i}$ for $1\leq i\leq4$ and $l_{ij}$ with $l_{kl}$ for $\{i,j,k,l\}=\{1,2,3,4\}$ and $i<j$, $k<l$, see \cite[Exp.~2, case $n=3$]{russo}. We write $$A_1=E_0+C, A_2=l_{12}+l_{34}, A_3=l_{13}+l_{24}, A_4=l_{14}+l_{23}$$ (divisors on $X$) and note that all $A_i$ belong to the same linear system. Similarly, the divisors $B_i=E_i+l_{0i}$ for $1\leq i\leq4$ on $X$ are also linearly equivalent to each other. Note that we did write $A_i=L_i+\overline{L_i}$ and $B_i=L_i'+\overline{L_i'}$ for suitable lines $L_i$ and $L_i'$ on $X_\C$. Each of the two linear systems realize $X$ as a conic bundle, i.e., define a morphism $X\to\pp^1$ all of whose fibers are isomorphic to a plane conic curve \cite[Exp.~2]{russo}. The four singular fibers of each bundle are exactly the $A_i$ and $B_i$ respectively. Therefore, for each connected component $S$ of $X(\R)$ there are exactly two values for $i$ and $j$ such that $L_i\cap\overline{L_i}$ resp. $L_j'\cap \overline{L_j'}$ is a point on $S$. Our two conic bundle structures on $X$ induce a map $X\to\pp^1\times\pp^1$ which is a double cover since $A_i\cdot B_j=2$. Since $A_i+B_j$ is an anticanonical divisor, this double cover is a linear projection of $X$ to $\pp^3$ whose image is a hypersurface isomorphic to $\pp^1\times\pp^1$, i.e., defined by a quadric with signature $(2,2)$. The cone over this quadric in $\pp^4$ is one of our singular quadrics. The four divisors $$D_j=L_1+\overline{L_j'}=E_0+l_{0j}$$ on $X_\C$ for $1\leq j\leq 4$ also realize $X_\C$ as a conic bundle $X_\C\to\pp^1_\C$ and for each $j$ we consider the map $X_\C\to\pp^1_\C\times\pp^1_\C$ associated to $D_j$ in the first coordinate and $\overline{D_j}$ on the second. This corresponds to a morphism $f_j:X\to Q$ where $Q\subset\pp^3$ is a hypersurface defined by a quadric of signature $(3,1)$. Since $D_j\cdot \overline{D_j}=2$, this is a double cover, and since $D_j+\overline{D_j}$ is anticanonical, the maps $f_j$ correspond to linear projections of $X$. This shows $(2)$ and $(3)$. We have $$D_j\cdot \overline{D_j}=L_1\cdot\overline{L_1}+L_j'\cdot\overline{L_j'}.$$
 Thus in order to determine whether $f_i:X\to Q$ is real fibered or not, we have to check whether the two intersection points $L_1\cap\overline{L_1}$ and $L_i'\cap\overline{L_i'}$ lie on the same (not real fibered) or different connected components (real fibered) of $X(\R)$. As noted above, both cases occur for exactly two values of $j$.
\end{proof}

With this preparation we are able to determine for which del Pezzo surfaces $X$ there is a real fibered morphism $X\to\pp^2$ whose corresponding linear system is anticanonical.

\begin{prop}\label{prop:pezzohyps}
 Let $X$ be a real del Pezzo surface and $K$ a canonical divisor on $X$. There is a real fibered morphism $f: X\to\pp^2$ such that the pullback of a line is linearly equivalent to $-K$ on $X$ if and only if we have one of the following:
 \begin{enumerate}
  \item $X$ is a quartic surface in $\pp^4$ such that $X(\R)$ is homeomorphic to a disjoint union of two spheres;
  \item $X$ is a cubic hypersurface in $\pp^3$ such that $X(\R)$ is homeomorphic to a disjoint union of a sphere and a real projective plane;
  \item $X$ is a double cover of $\pp^2$ branched along a smooth plane quartic curve $C$ with $C(\R)=\emptyset$ so that $X(\R)$ is homeomorphic to a disjoint union of two real projective planes.
 \end{enumerate}
 In particular, in each case $X(\R)$ has two connected components.
\end{prop}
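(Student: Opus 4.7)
The strategy is to translate the existence of $f$ into a numerical constraint on $X(\R)$ via real fiberedness, then invoke the classification of real del Pezzo surfaces to single out the three listed cases.

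First I would observe that since $f^{*}\cO_{\pp^{2}}(1) \sim -K_{X}$, the degree of the finite morphism $f$ equals $(-K_{X})^{2} = K_{X}^{2} =: d$, the degree of the del Pezzo surface. By \Cref{thm:unramified}, real fiberedness of $f$ implies that $f$ is unramified over $X(\R)$, so the restriction $f|_{X(\R)}\colon X(\R) \to \pp^{2}(\R) = \mathbb{RP}^{2}$ is a $d$-sheeted unramified covering. Since $\pi_{1}(\mathbb{RP}^{2}) = \Z/2$, every connected component of $X(\R)$ is either a copy of $\mathbb{RP}^{2}$ (one sheet) or a copy of $S^{2}$ (two sheets); hence $\chi(X(\R)) = d$ and $X(\R)$ has at most $d$ connected components.

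This Euler characteristic constraint rules out $d = 1$ (where Riemann--Roch gives $h^{0}(-K_{X}) = 2$, so no morphism to $\pp^{2}$ with $f^{*}\cO_{\pp^{2}}(1) \sim -K_{X}$ exists) and every $d \geq 5$: running through Russo's classification \cite{russo} of real del Pezzo surfaces, in every topological type occurring in degree $d \geq 5$ the Euler characteristic of $X(\R)$ stays bounded by a small constant (at most $2$), hence is strictly less than $d$. It remains to treat $d \in \{2,3,4\}$ individually.

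For $d = 4$, Russo's list shows that $\chi(X(\R)) = 4$ singles out the topological type $S^{2} \sqcup S^{2}$, and \Cref{lem:quarticsurface} already constructs a real fibered morphism as the composition of two projections through a singular quadric of signature $(3,1)$. For $d = 3$, the five topological types of smooth real cubic surfaces have $\chi(X(\R)) \in \{-5,-3,-1,1,3\}$, so $\chi = 3$ forces $X(\R) \cong \mathbb{RP}^{2} \sqcup S^{2}$; since $\mathbb{RP}^{2}$ does not embed into a $3$-ball, the $S^{2}$ component bounds an open ball $B \subset \pp^{3}(\R)$ disjoint from the $\mathbb{RP}^{2}$ component, and the linear projection from any $e \in B$ is real fibered because each real line through $e$ meets $S^{2}$ in exactly two points and, since $[\mathbb{RP}^{2}]$ is the nonzero class in $H_{2}(\pp^{3}(\R);\Z/2)$, meets the $\mathbb{RP}^{2}$ component in an odd number of points, which by B\'{e}zout forces exactly one such intersection. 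For $d = 2$ the morphism $f$ is necessarily the anticanonical double cover, branched along a smooth real quartic $C \subset \pp^{2}$; real fiberedness forces $C(\R) = \emptyset$ and the defining quartic $p$ to be positive on $\R^{3} \setminus \{0\}$. In the weighted model $y^{2} = p(x)$ the identification $(y,x) \sim (\lambda^{2}y, \lambda x)$ with $\lambda = -1$ preserves $y$, so the two sheets $y = \pm\sqrt{p(x)}$ descend to disjoint sections of $f|_{X(\R)}$; the cover is therefore trivial and $X(\R) \cong \mathbb{RP}^{2} \sqcup \mathbb{RP}^{2}$.

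The main obstacle is the classification input: ensuring that in each degree $d \in \{2,3,4\}$ the Euler characteristic $\chi(X(\R)) = d$ isolates a unique realized topological type relies on Russo's classification, and this is where most of the case work sits. The three subsequent constructions are short: \Cref{lem:quarticsurface} handles $d=4$, the $d = 3$ case reduces to a mod-$2$ intersection count in $\pp^{3}(\R)$, and the $d = 2$ case is the elementary monodromy computation sketched above.
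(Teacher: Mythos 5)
Your proposal is correct and follows the same overall structure as the paper's proof: translate real fiberedness into a topological constraint on $X(\R)$ (the paper cites \cite[Cor.~2.20]{eliich} for the statement that $X(\R)$ is a disjoint union of $s$ spheres and $r$ copies of $\mathbb{RP}^2$ with $d=2s+r$; you re-derive exactly this via $\pi_1(\mathbb{RP}^2)=\Z/2$ and the unramified covering from \Cref{thm:unramified}), then sift Russo's classification, and finally supply the hyperbolicity constructions in the three surviving degrees. The one place where you genuinely diverge is $d=3$: the paper just cites \cite[Prop.~2.2]{victorsurvey}, whereas you give a short self-contained argument by projecting from a point $e$ in the interior of the ball bounded by the spherical component and using the mod-$2$ intersection number of a line with $[\mathbb{RP}^2]\in H_2(\pp^3(\R);\Z/2)$ together with B\'ezout to conclude that every real line through $e$ meets $X$ in three real points. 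That argument is correct (it yields hyperbolicity for generic lines, and hyperbolicity is a closed condition on the direction $v$) and is arguably more transparent than the citation. Your $d=2$ monodromy computation (the two sheets $y=\pm\sqrt{p(x)}$ descend to disjoint sections because $y$ has even weight) similarly makes explicit what the paper only asserts. In short: same route, with two of the paper's external citations replaced by direct elementary arguments.
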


\begin{proof}
 Let $d=K.K$. For $d=2$ the anticanonical map is a double cover of $\pp^2$ branched along a plane quartic curve $C$, see \Cref{ex:pezzo}. This is real fibered if and only if $C(\R)=\emptyset$ and $X(\R)$ is homeomorphic to a disjoint union of two real projective planes. In general if there exists such a morphism $f$, then $X(\R)$ must be homeomorphic to the disjoint union of $s$ spheres and $r$ real projective planes such that $d=2s+r$ by \cite[Cor.~2.20]{eliich}. Going through the classification of real del Pezzo surfaces in \cite{russo} we see that for $d\neq 2$ this is only possible for a complete intersection of two quadrics in $\pp^4$ such that $X(\R)$ is homeomorphic to a disjoint union of two spheres ($d=4$) or a cubic hypersurface in $\pp^3$ such that $X(\R)$ is homeomorphic to a disjoint union of a sphere and a real projective plane ($d=3$). This shows the ``only if part''. It thus remains to show that the embedded surfaces in $(1)$ and $(2)$ are hyperbolic as these embeddings correspond to the anticanonical linear system, see \Cref{ex:pezzo}. The case $d=3$ is covered by \cite[Prop.~2.2]{victorsurvey}. For the case $d=4$ we can compose a real fibered linear projection $X\to Q$ from \Cref{lem:quarticsurface} to a hyperbolic hypersurface $Q\subset\pp^3$ with the linear projection $Q\to\pp^2$ from a point with respect to which $Q$ is hyperbolic.
\end{proof}

\begin{lem}\label{lem:pezzoram}
 Let $X$ be a del Pezzo surface and $K$ a canonical divisor on $X$. Let $f: X\to\pp^2$ be a real fibered morphism of degree $d$ such that the pullback of a line is linearly equivalent to $-K$ as constructed in the previous Proposition. 
 \begin{enumerate}[a)]
  \item The ramification divisor $R$ is linearly equivalent to $-2K$. 
  \item Let $g$ be a rational function on $X$ whose principal divisor is $R+2K$. Then $g$ has constant sign on each of the two connected components of $X(\R)$ and these signs are not the same.
 \end{enumerate}
\end{lem}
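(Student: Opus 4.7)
The plan is to deduce part a) from Riemann--Hurwitz, and for part b) to combine a parity argument with a contradiction obtained from \Cref{thm:weil}.

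For part a), since $X$ and $\pp^{2}$ are smooth and $f$ is finite (hence flat), Riemann--Hurwitz gives $K_X \sim f^{*}K_{\pp^{2}} + R$. Combining the hypothesis $f^{*}\cO_{\pp^{2}}(1) \cong \cL(-K)$ with $K_{\pp^{2}} \sim -3H$ yields $f^{*}K_{\pp^{2}} \sim 3K_X$, so $R \sim -2K_X$.

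For the first claim of b), observe that by \Cref{thm:unramified} the morphism $f$ is unramified at every real point, so no real prime divisor of $X$ lies in the support of $R$. On the other hand, $2K$ has even multiplicity along every prime divisor. Hence $(g) = R + 2K$ has even multiplicity along every real prime divisor of $X$. At a generic real point of such a divisor one writes $g = h^{2} u$ locally with $u$ a unit, so the sign of $g$ does not change across it. The remaining exceptional locus (intersections of components of $(g)$, singular real points of real components, etc.) is zero-dimensional in the real $2$-manifold $X(\R)$ and therefore does not disconnect either connected component; it follows that $g$ has a well-defined constant sign on each.

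For the opposite-signs claim, I argue by contradiction. Suppose $g$ takes the same sign on both components. Replacing $g$ by $-g$ if necessary, we may assume $g \geq 0$ on $X(\R)$. Set $D = -K$, so that $2D + (g) = R$. Since $-K$ is ample on the del Pezzo surface $X$, Kodaira vanishing together with Riemann--Roch on the rational surface $X$ gives
\[
  \ell(-K) \,=\, \chi(-K) \,=\, 1 + K_X^{2} \,=\, d+1 \,\geq\, d \,=\, \deg(f).
\]
Hence \Cref{thm:weil}(1) applies and produces a positive symmetric $f$-Ulrich line bundle $\cL(-K)$. This forces $f_{*}\cL(-K) \cong \cO_{\pp^{2}}^{d}$, and taking global sections yields $\ell(-K) = d$, contradicting the equality $\ell(-K) = d+1$. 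The delicate point is the parity argument in the constant-sign claim, where one must check continuity of sign across the real part of the support of $2K$; the remainder is a clean combination of Riemann--Hurwitz, Kodaira vanishing, and the rank count characterizing Ulrich line bundles.
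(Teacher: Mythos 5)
Your proof of part a) is the same Riemann--Hurwitz computation as the paper's, so nothing to add there. For part b) you take a genuinely different route. The paper works case by case through the three surfaces of \Cref{prop:pezzohyps} and produces an explicit representative $g$ each time ($y/l^2$ for the degree-two cover, $\textrm{D}_eh/l^2$ for the cubic, and $x_0x_4/l^2$ for the quartic in $\pp^4$), reading off the sign behavior directly and then invoking \Cref{rem:signonclass} to transfer it to any other representative. You instead split the two assertions in b) and prove each by a uniform argument. For the constant-sign claim you use a parity argument: by \Cref{thm:unramified} the support of $R$ misses $X(\R)$, so along every real prime divisor the valuation of $(g)=R+2K$ is even, and hence $g$ does not change sign across the one-dimensional real locus of its divisor; removing the zero-dimensional bad set does not disconnect the two-manifold $X(\R)$. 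This is correct and slightly cleaner than the paper's verification. For the opposite-signs claim you run \Cref{thm:weil}(1) in reverse: if the signs agreed you could take $D=-K$ and the nonnegative function $g$ (or $-g$) to conclude $\cL(-K)$ is a positive symmetric $f$-Ulrich line bundle, which would force $\ell(-K)=\deg(f)=d$, contradicting $\ell(-K)=1+K^2=d+1$ from Riemann--Roch and Kodaira vanishing. This is a valid and, to my mind, illuminating argument: it explains the sign flip as exactly the obstruction to $\cL(-K)$ itself being Ulrich, which is why the proof of \Cref{thm:ulrichdelpezzo} has to twist by $L_2-L_1$. The trade-off is that the paper's explicit functions $g$ are used computationally in the examples at the end of \Cref{sec:delpezzo}, so the case analysis is not pure redundancy. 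Also note that your argument, unlike the paper's, requires \Cref{thm:weil} and the Ulrich machinery; the paper's proof of this lemma is self-contained relative to that material.
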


\begin{proof}
 The ramification divisor is linearly equivalent to the canonical divisor on $X$ minus the pullback of the canonical divisor on $\pp^2$. As the latter is $\cO_{\pp^2}(-3)$ its pullback is $3K$. This shows $a)$.
 
 For $d=2$ part $b)$ follows from the fact $X$ is a double cover of $\pp^2$ of the form $y^2=p(x)$ where $p\in\R[x_0,x_1,x_2]_4$ is a globally positive quartic curve. The ramification divisor is given as the zero locus of $y$ whereas $-K$ is the zero set of a linear form $l\in\R[x_0,x_1,x_2]$. Clearly $\frac{y}{l^2}$ has the desired properties.
 
 In the case $d=3$ our surface $X$ is the zero set of a hyperbolic polynomial $h$ and our morphism $f$ is the linear projection from a point $e\in\pp^3$ of hyperbolicity. Its ramification divisor is thus cut out by the interlacer $\textrm{D}_eh$. Again $-K$ is the zero set of a linear form $l\in\R[x_0,x_1,x_2]$ and $\frac{\textrm{D}_eh}{l^2}$ has different sign on the two connected components of $X(\R)$.
 
 In the case $d=4$ we first note that by \Cref{lem:quarticsurface} we can assume (after a linear change of coordinates) that $X$ is cut out by $q_1=x_0^2-(x_1^2+x_2^2+x_3^2)$ and $q_2=x_4^2-p$ for some quadratic form $p\in\R[x_0,x_1,x_2,x_3]_2$ that is nonnegative on $Q=\cV(q_1)\subset\pp^3$. Our real fibered morphism $f:X\to\pp^2$ is then the composition of the two linear projections $X\to Q$ with center $[0:0:0:0:1]$ and $Q\to\pp^2$ with center $[1:0:0:0]$. Thus the ramification locus of $f$ is cut out by $x_0\cdot x_4$. Again $-K$ is the zero set of a linear form $l\in\R[x_0,x_1,x_2]$ and $\frac{x_0\cdot x_4}{l^2}$ has different sign on the two connected components of $X(\R)$.
\end{proof}

Let $X_4$ be a complete intersection of two quadrics in $\pp^4$ such that $X(\R)$ is homeomorphic to a disjoint union of two spheres. We fix a sequence of morphisms \begin{align}X_2\to X_3\to X_4\label{eqn:blowup}\end{align} where each map $f_i:X_{i}\to X_{i+1}$ is the blow up of $X_i$ at real point on a connected component of $X_i(\R)$ that is homeomorphic to a sphere. Further let $E_i$ be the exceptional divisor of $f_i$. By the classification of real del Pezzo surfaces in \cite{russo}, we have that $X_3$ is a cubic hypersurface in $\pp^3$ such that $X(\R)$ is homeomorphic to a disjoint union of a sphere and a real projective plane and $X_2$ is a double cover of $\pp^2$ branched along a smooth plane quartic curve $C$ with $C(\R)=\emptyset$ so that $X(\R)$ is homeomorphic to a disjoint union of two real projective planes. Conversely, every such real del Pezzo surface fits in such a sequence of blow-ups. 

\begin{lem}\label{lem:cubicsurface}
 Consider the cubic hypersurface $X_3\subset\pp^3$.
 \begin{enumerate}[a)]
  \item In addition to $E_3$ there are two more real lines $L$ and $L'$ on $X_3$. These three lines lie on a common plane.
  \item There are two different hyperplanes that contain $L$ and are tangential to a real point on the connected component of $X_3(\R)$ that is homeomorphic to the sphere.
  \item The divisors $H_1$ and $H_2$ on $X_3$ that are defined as the intersections with the hyperplanes from part $b)$ are of the form $H_i=L+L_i+\overline{L_i}$ for some non-real lines $L_i$ on $(X_3)_\C$.
  \item The lines $L_i$ and $\overline{L_i}$ are disjoint from $E_3$. Furthermore, $L_1$ and $L_2$ are disjoint.
  \item Let $f$ be a rational function on $X_3$ whose principal divisor is $L_2+\overline{L_2}-L_1-\overline{L_1}$. Then $f$ has constant sign on each of the two connected components of $X(\R)$ and these signs are not the same.
 \end{enumerate}
\end{lem}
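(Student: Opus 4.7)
The approach is to combine the classical theory of the $27$ lines on a smooth cubic surface with a Galois-theoretic and topological analysis of the real structure. In every part, the starting point is Schl\"afli's classification of real smooth cubic surfaces, which tells us that the topological type $X_3(\R)\cong S^2\sqcup\R\pp^2$ corresponds to a surface with exactly three real lines among the $27$.

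For part (a), one of the three real lines is $E_3$, so I would label the other two $L, L'$. To see that they are coplanar, I observe that complex conjugation permutes the five tritangent planes through $E_3$; since $5$ is odd, at least one such plane is Galois-fixed and hence real. Using Galois-invariance on $\textnormal{Pic}(X_3)$ together with the fact that $X_3$ carries only the three real lines $E_3, L, L'$, this real tritangent plane through $E_3$ must contain both $L$ and $L'$. For parts (b) and (c), I would consider the pencil of hyperplanes through $L$. A generic $H$ in this pencil meets $X_3$ in $L$ plus a smooth conic $C_H$, and $H$ is tangent to $X_3$ precisely when $C_H$ becomes reducible. Of the five tritangent planes through $L$, one is the plane of (a); a Galois count on the remaining four then singles out two real ones, each of the form $L + L_i + \bar L_i$ with $L_i, \bar L_i$ a conjugate pair of non-real lines (since the three real lines of $X_3$ are exhausted by the plane of (a)); this is part (c). The singular point $p_i = L_i \cap \bar L_i$ is real, and I claim it lies on $S^2$: at a point on the strictly convex sphere component the second fundamental form of $X_3$ is definite, so a tangent plane meets $X_3(\R)$ locally only at the point of tangency, matching the observed real structure $L(\R)\cup\{p_i\}$, whereas a tangent plane touching $\R\pp^2$ generically produces a one-dimensional real intersection, incompatible with this structure.

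For part (d), pairing $L + L_i + \bar L_i \sim -K$ with $E_3$ gives $L.E_3 + 2 L_i.E_3 = -K.E_3 = 1$ (using $L_i.E_3 = \bar L_i.E_3$ by Galois invariance); since $L.E_3 = 1$ necessarily $L_i.E_3 = 0$. Similarly, $(L_1+\bar L_1).(L_2+\bar L_2) = (-K-L)^2 = 3 - 2 - 1 = 0$ is a sum of four nonnegative intersection numbers of distinct lines and thus forces each term to vanish. For part (e), setting $f = h_2/h_1$ for linear forms $h_i$ defining $H_i$ gives the stated principal divisor. Near $p_i$, the form $h_i$ restricted to $X_3$ factors as the product of two complex conjugate linear forms defining $L_i$ and $\bar L_i$, and hence is a positive-semidefinite quadratic form in local real coordinates; thus $h_i$ has constant sign on a punctured neighborhood of $p_i$ in $X_3(\R)$. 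Sign-tracking across $L(\R)$ together with the global partition of $\pp^3(\R)$ by $H_1\cup H_2$ into two half-spaces---one containing the sphere component and the other the $\R\pp^2$ component---then shows that $f$ takes opposite signs on the two components.

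The main obstacle will be the geometric step in part (b) identifying both tangent points as lying on the sphere rather than on the projective plane; this combines the classification of tritangent planes with a delicate local-analytic distinction between the two components of $X_3(\R)$. Once this is in hand, parts (c)--(e) reduce to Riemann--Roch-style intersection theory and a careful, though routine, sign computation.
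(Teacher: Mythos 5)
Your outline matches the paper's, and your intersection-theoretic argument for part (d) is correct and arguably cleaner than the paper's (which instead locates the would-be intersection point on $E_3\cap L$ and derives a contradiction from the position of the unique real point of $L_i$). But there are two genuine gaps.

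The first is in parts (a)--(b), where your Galois parity counts do not deliver what you claim. For (a), the parity argument on the five tritangent planes through $E_3$ produces a \emph{real} tritangent plane, but a real tritangent plane through $E_3$ could a priori be of the form $E_3+M+\overline M$ with $M$ non-real; to conclude it is $E_3+L+L'$ you must already know that $L$ and $L'$ meet $E_3$, which is the content of the claim. More seriously, for (b) a conjugation action on the remaining four tritangent planes through $L$ fixes $0$, $2$ or $4$ of them, so it does not ``single out two real ones''; and even for a real plane $L+M+\overline M$ you must still show the real singular point $M\cap\overline M$ lies on the spherical component, for which your second-fundamental-form dichotomy is only heuristic (the word ``generically'' hides the possibility of elliptic points on the $\R\pp^2$ component, whose tangent planes also meet $X_3(\R)$ locally in a single point). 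The paper's key idea, which is missing from your proposal, is convexity: in the affine chart $\pp^3\setminus H_0$ the spherical component bounds a compact convex set $K$ (an affine slice of the hyperbolicity cone), and the pencil of planes through $L\subset H_0$ becomes a family of parallel affine planes, of which \emph{exactly two} are supporting hyperplanes of $K$. This simultaneously yields the count and the location of the tangency points, and then (c) follows since the residual conic has an isolated real point.

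The second gap is in part (e). That $f=h_2/h_1$ has constant sign on each component is fine (its only real zeros and poles on $X_3(\R)$ are the isolated points $p_1,p_2$ on the sphere). But the sign change rests entirely on your assertion that $H_1\cup H_2$ separates the spherical component from the $\R\pp^2$ component in $\pp^3(\R)$, and this is precisely the nontrivial claim: the pseudoplane meets both $H_1$ and $H_2$ along $L(\R)$, and determining on which side of the pair of planes it lies requires an argument. The paper supplies one by observing that $l_1l_2$ is by construction an interlacer of the defining cubic (both planes support the hyperbolicity region), whence $l_1l_2/l_1^2$ changes sign between the two components by the standard sign behaviour of quotients of interlacers (\cite[Lem.~2.4]{interl} together with the analysis of the ramification divisor). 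You would need either to import that machinery or to prove the separation statement directly, e.g.\ by a careful root-counting argument on lines through the interior of $K$; as written, the ``sign-tracking'' step is an assertion, not a proof.
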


\begin{proof}
 The number of real lines on $X_3$ can be found for example in \cite[p.~302]{russo}. As they all must lie in the component of $X(\R)$ that is homeomorphic to $\R\pp^2$, each two of them intersect in a point. Thus they all lie in a common plane $H_0$ which proves $a)$.
 
 In the affine chart $\R^3=(\pp^3\setminus H_0)(\R)$ the connected component of $X_3(\R)$ that is homeomorphic to a sphere is the boundary of a compact convex set $K\subset\R^3$, namely $K$ is an affine slice of the hyperbolicity cone of the cubic that defines $X_3$. The hyperplanes containing $L$ correspond to a family of parallel affine hyperplanes in $\R^3$. Thus exactly two of them are tangent to $K$. This shows $b)$.
  
 The zero divisors of these hyperplanes $H_i$ contain besides $L$ a plane conic which has an isolated real point, namely the point of tangency. Thus the conic is a complex conjugate pair of lines $L_i$ and $\overline{L_i}$ which shows part $c)$.
 
 In order to show $d)$ assume for the sake of a contradiction that $L_i$ intersects $E_3$. Since $L_i\subset H_i$ and $E_3\subset H_0$, this intersection point must lie on $E_3\cap H_0\cap H_i=E_3\cap L$ which implies that is real. But the only real point of $L_i$ lies on the spherical component of $X_3(\R)$. An analogous argument shows that $L_1$ and $L_2$ are disjoint.
 
 Finally, let $l_i$ be the linear form that cuts out $H_i$. Then by construction $p=l_1l_2$ is an interlacer of the polynomial defining $X_3$. Thus the rational function $f=\frac{p}{l_1^2}$ has constant sign on each of the two connected components of $X(\R)$ and these signs are not the same. Clearly the principal divisor corresponding to $f$ is $L_2+\overline{L_2}-L_1-\overline{L_1}$. Therefore, we have shown part $d)$.
\end{proof}

\begin{figure}[h]
  \begin{center}
    \includegraphics[width=5cm]{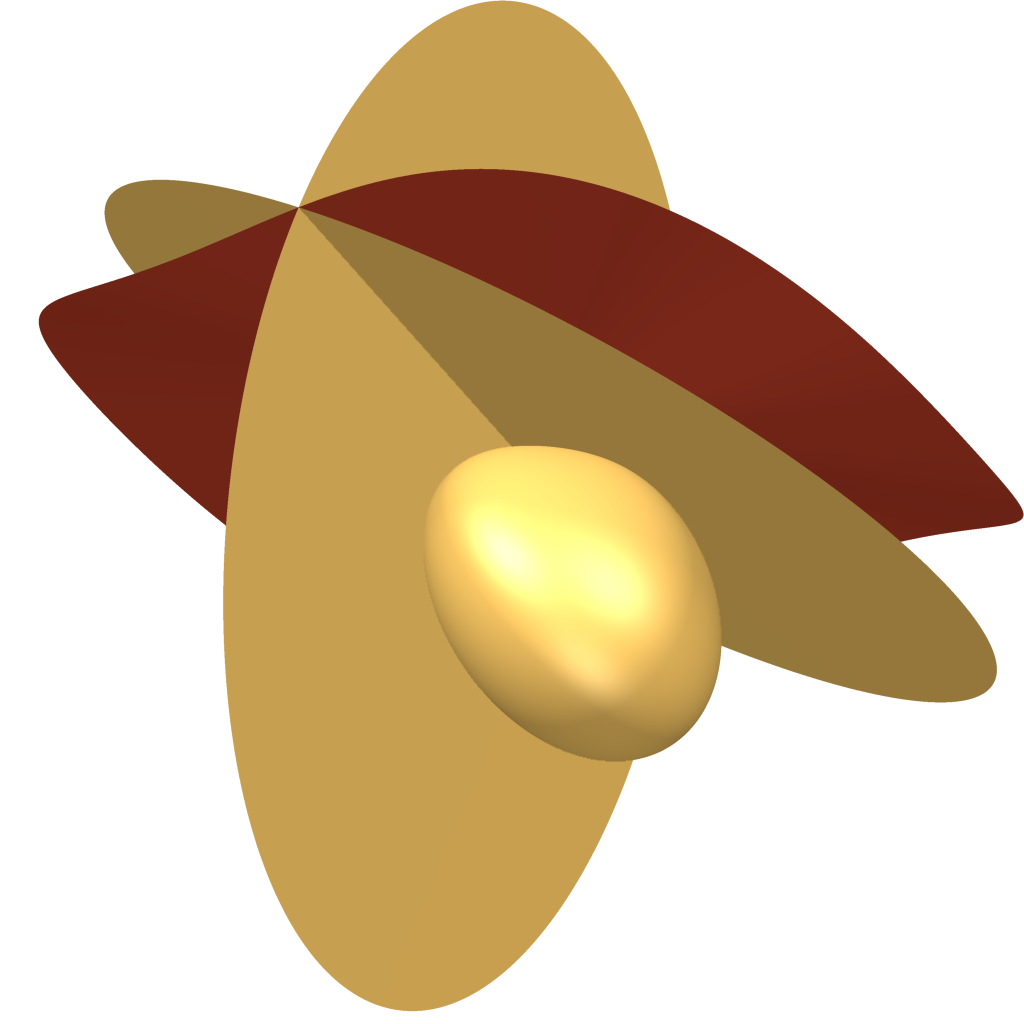}
  \end{center}
  \caption{A cubic hyperbolic hypersurface with two planes that contain a line on the pseudoplane (red) and are tangent to the spherical component (yellow).}
  \label{fig:cubic}
  \end{figure}

\begin{thm}\label{thm:ulrichdelpezzo}
 Let $X$ be a del Pezzo surface and $f: X\to\pp^2$ a real fibered morphism of degree $d$ such that the pullback of a line is the anticanonical divisor class $-K$. Then there is a positive hermitian $f$-Ulrich line bundle.
\end{thm}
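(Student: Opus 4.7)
The plan is to apply \Cref{thm:weil}(2), which reduces the problem to constructing, on $X_\C$, a Weil divisor $D$ and a rational function $s$ on $X$ that is nonnegative on $X(\R)$, such that $D+\overline{D}+(s)=R$ and $\ell(D)\geq d$. By \Cref{lem:pezzoram}, $[R]=[-2K]$ and the function $g$ with $(g)=R+2K$ takes opposite signs on the two components of $X(\R)$; by \Cref{rem:signonclass}, this forces $D$ into a non-real class. \Cref{prop:pezzohyps} leaves three cases, which I would treat separately, with the cubic case as the archetype.

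For $d=3$, with the notation of \Cref{lem:cubicsurface}, I would set
\[
 D := L + L_1 + L_2.
\]
Then $D+\overline{D} = 2L + L_1 + \overline{L_1} + L_2 + \overline{L_2} = H_1+H_2$ is cut out on $X$ by $l_1 l_2$, while $R$ is cut out by $\textrm{D}_e h$, so $s := \textrm{D}_e h/(l_1 l_2)$ realises the equation $D+\overline{D}+(s)=R$. Since $l_1 l_2$ is an interlacer of $h$, \cite[Lem.~2.4]{interl} gives $s\geq 0$ on $X(\R)$. The count $\ell(D)=3$ follows from the iterated exact sequences
\[
 0 \to \cO(D') \to \cO(D'+L') \to \cO_{L'}\bigl((D'+L')\cdot L'\bigr) \to 0
\]
along the chain $\cO \subset \cO(L) \subset \cO(L+L_1) \subset \cO(D)$: with $L^2=L_i^2=-1$, $L\cdot L_i = 1$, and $L_1\cdot L_2=0$ (from \Cref{lem:cubicsurface}(d)), each restriction has nonnegative degree, and $h^1(\cO_X)=0$ ensures every step contributes exactly one new global section.

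For $d=4$ and $d=2$ I would follow the same template with different geometric input. In the quartic case, \Cref{lem:quarticsurface} supplies two real fibered double covers $X\to Q_j$ onto hyperbolic quadric surfaces in $\pp^3$ that play the role of the tangent-plane construction on the cubic: for each $j$, picking a real point on the spherical component of $Q_j(\R)$ yields a tangent plane cutting out a conjugate pair of lines on $Q_{j,\C}$, and one assembles their pullbacks to $X_\C$ into a divisor $D$ whose conjugate sum is cut out by a cubic interlacer of $X$ obtained from a product of two tangent-plane equations. The bound $\ell(D)\geq 4$ is verified by the analogous exact-sequence argument on the supporting curves, via the configuration of the $16$ lines on $X_\C$. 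In the degree-two case, $X$ is a double cover of $\pp^2$ branched along a point-free quartic $C$, and $D$ should be a sum of specific $(-1)$-curves on $X_\C$ lying over two bitangents of $C$, chosen so that $D+\overline{D}$ is the preimage of a contact conic and the induced correction function has sign opposite to the function $y/l^2$ of \Cref{lem:pezzoram}(b); Riemann--Roch and the standard $(-1)$-curve exact sequences then give $\ell(D)=2$.

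The hardest step will be the explicit identification of the correct divisor class in the last two cases. For $d=4$ this requires tracking the conjugation action on the sixteen lines of $X_\C$ and the interplay of the two conic-bundle structures from \Cref{lem:quarticsurface}; for $d=2$ the main subtlety is matching the sign condition imposed by \Cref{lem:pezzoram}(b) against the configuration of real bitangents of the branch quartic, a classical ingredient reminiscent of Hilbert's proof for ternary quartics.
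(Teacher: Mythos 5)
Your overall strategy --- reduce to \Cref{thm:weil}(2) and produce a divisor $D$ with $D+\overline{D}+(s)=R$, $s\geq 0$, $\ell(D)\geq d$ --- is exactly the paper's, and your cubic case is correct and complete: $D=L+L_1+L_2$ with $s=\mathrm{D}_eh/(l_1l_2)$ works (this is in essence \Cref{cor:dixon}(2) with the interlacer $l_1l_2$, cf.\ \Cref{ex:cubic}; your divisor differs from the paper's $M=L_2-L_1-K\sim L+\overline{L_1}+L_2$ only by conjugating one line, which changes nothing). One small slip there: the first step of your exact-sequence chain restricts to $\cO_L(L^2)=\cO_{\pp^1}(-1)$, which has negative degree and contributes no new section; the count $\ell(D)=3$ still comes out right as $1+0+1+1$.

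The genuine gap is that the cases $d=2$ and $d=4$ are not proved. For each of them you still need (i) an explicit divisor class $D$, (ii) a verification that the resulting correction function is nonnegative --- which, as you note via \Cref{rem:signonclass} and \Cref{lem:pezzoram}(b), forces a sign-matching condition that is exactly the delicate point --- and (iii) the bound $\ell(D)\geq d$. Your sketches (tangent planes to the quadrics $Q_j$ for $d=4$; bitangents of the branch quartic for $d=2$) are plausible starting points but would each require an analogue of \Cref{lem:cubicsurface}, and for $d=2$ the sign analysis of bitangent pairs is precisely the hard classical step in Hilbert's theorem, so it cannot be waved at. The paper avoids all three ad hoc constructions with one idea you are missing: it places $X$ in the blow-up chain $X_2\to X_3\to X_4$ of (\ref{eqn:blowup}) and uses the \emph{same} divisor $M=L_2-L_1-K$ on all three surfaces, where $L_1,L_2$ are the non-real lines of \Cref{lem:cubicsurface} transported along the birational maps (they avoid the exceptional loci by \Cref{lem:cubicsurface}(d)). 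The sign condition then follows once and for all from \Cref{lem:cubicsurface}(e) together with \Cref{lem:pezzoram}(b), since the rational functions $f$ and $g$ both change sign between the two components of $X(\R)$, so $f/g\geq 0$; and the dimension count is a single Riemann--Roch computation $\ell(M)\geq\frac12 M.(M-K)+1=d$ using $\ell(K-M)=0$ because $(K-M).(-K)=-2d<0$. Without this (or a completed version of your two missing constructions), the theorem is only established for cubic surfaces.
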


\begin{proof}
 We put $X$ into a sequence of blow-ups as in (\ref{eqn:blowup}). Since the lines $L_i$ and $\overline{L_i}$ on $(X_3)_\C$ from \Cref{lem:cubicsurface} are disjoint from $E_3$, they can be identified with some lines on $(X_4)_\C$ which we, by abuse of notation, also denote by $L_i$ and $\overline{L_i}$. The same we do for the proper transforms of $L_i$ and $\overline{L_i}$ in $(X_2)_\C$. We want to apply part $b)$ of \Cref{thm:weil} to the divisor $M=L_2-L_1-K$ where $K$ is a canonical divisor on $X$. Since $X$ is birational to $X_3$ the rational function $f$ from part $e)$ of \Cref{lem:cubicsurface} is also a rational function on $X$ and we have $M+\overline{M}=(f)-2K$. By \Cref{lem:pezzoram} there is a rational function $g$ on $X$ such that $(g)=R+2K$ where $R$ is the ramification divisor. Furthermore, we can choose $g$ to have the same sign on each of $X(\R)$ as $f$. Thus $M+\overline{M}=(\frac{f}{g})+R$ and $\frac{f}{g}$ is nonnegative. It thus remains to show that the dimension $\ell(M)$ of the space of global sections of $\cL(M)$ is at least $d$. To this end we invoke the Theorem of Riemann--Roch for surfaces \cite[Thm.~1.6]{Hart77}:
 $$ \ell(M)+\ell(K-M)=\frac{1}{2}M.(M-K)+1+p_a+s(M)\geq \frac{1}{2}M.(M-K)+1=d. $$Now since the intersection product of $(K-M)$ with the ample divisor $-K$ equals $-2d<0$, it cannot be effective. Thus $\ell(K-M)=0$ and the claim follows.
\end{proof}

We now apply \Cref{thm:ulrichdelpezzo} to the three cases from \Cref{prop:pezzohyps}.
The following consequence is originally due to Buckley and Ko\c{s}ir \cite{buck}.

\begin{cor}
 Every hyperbolic polynomial $h\in\R[x_0,x_1,x_2,x_3]$ of degree three has a definite hermitian determinantal representation.
\end{cor}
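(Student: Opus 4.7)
The plan is to handle the case where $h$ is irreducible and $X=\cV(h)\subset\pp^3$ is smooth by a direct application of \Cref{thm:ulrichdelpezzo} together with \Cref{prop:ulrichdetrep}, and then extend to the general case by elementary reductions.

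In the smooth irreducible case, choose coordinates so that $h$ is hyperbolic with respect to $e=[1:0:0:0]$ with $h(e)=1$. The linear projection $f=\pi_e\colon X\to\pp^2$ onto the last three coordinates is then real fibered of degree three, and since a smooth cubic in $\pp^3$ is anticanonically embedded, the pullback of a line under $f$ is linearly equivalent to the anticanonical class $-K$. We are thus exactly in case (2) of \Cref{prop:pezzohyps}, and \Cref{thm:ulrichdelpezzo} produces a positive hermitian $f$-Ulrich line bundle on $X$. Feeding this into \Cref{prop:ulrichdetrep} (with $r=1$, $d=3$, $e=1$) yields a hermitian $3\times 3$ matrix $A$ whose entries are linear forms in $x_1,x_2,x_3$ such that $h=\det(x_0 I-A)$. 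Writing $-A=x_1A_1+x_2A_2+x_3A_3$ with hermitian $A_i$ and setting $A_0=I$ gives $h=\det(x_0A_0+x_1A_1+x_2A_2+x_3A_3)$ with $A_0$ positive definite, which is a definite hermitian determinantal representation of $h$.

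For general $h$, two reductions finish the argument. If $h$ is reducible, its hyperbolic irreducible factors have degree one (with trivial $1\times 1$ representation) or two (with a $2\times 2$ hermitian Pauli-type representation coming from a suitable Clifford algebra, cf.\ \Cref{lem:cliff}), which combine block-diagonally into a definite hermitian representation of $h$. If $h$ is irreducible but $X$ is singular, approximate $h$ by a sequence of smooth irreducible hyperbolic cubics $h_n\to h$; generic small perturbations preserve hyperbolicity with respect to $e$ while making $X$ smooth. Each $h_n$ has a definite hermitian representation $h_n=\det(x_0 I-A_n)$ by the previous paragraph, and after normalizing (using that the coefficients of $h_n$ control the elementary symmetric functions of the eigenvalues of $A_n(x_1,x_2,x_3)$), the pencils $A_n$ remain in a compact set and a convergent subsequence produces the desired representation of $h$. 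The real obstacle, of course, is the existence of the positive hermitian Ulrich line bundle asserted in \Cref{thm:ulrichdelpezzo}, which rests on the explicit geometric input of \Cref{lem:cubicsurface} together with Riemann--Roch; once that is granted, the corollary is an immediate translation into matrix language via \Cref{prop:ulrichdetrep}.
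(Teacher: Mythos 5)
Your argument in the smooth case is exactly the paper's: the projection from $e$ is a real fibered degree-three anticanonical morphism, one invokes \Cref{prop:pezzohyps}(2), \Cref{thm:ulrichdelpezzo}, and \Cref{prop:ulrichdetrep} to get a hermitian pencil $x_0I-A$ with determinant $h$. For the general case the two routes diverge in presentation but not in substance. The paper dispatches all non-smooth cases at once by citing Nuij's theorem (hyperbolic polynomials are the closure of those with smooth zero set) together with the closedness of the set of hyperbolic polynomials admitting a definite hermitian determinantal representation \cite[Lem.~3.4]{hvelemt}. You split off the reducible case and handle it by block-diagonal sums of $1\times 1$ and $2\times2$ (Lorentzian quadratic) representations — correct, but unnecessary, since the limit argument already covers it — and then for the singular irreducible case you unwind the closedness lemma into an explicit eigenvalue-bound/compactness argument, which is valid (the eigenvalues of $A_n(x)$ are the roots of $h_n(\cdot,x)$, hence locally uniformly bounded, hence the hermitian pencils lie in a compact set). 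One phrase to repair: ``generic small perturbations preserve hyperbolicity with respect to $e$'' is not true — hyperbolicity is a closed, not open, condition at the boundary — and what you actually need is precisely Nuij's theorem on the density of strictly hyperbolic polynomials inside the hyperbolic ones, which the paper cites and which you should cite too rather than appeal to genericity.
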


\begin{proof}
 First assume that the zero set of $h$ is smooth. Then we are in case $(2)$ of \Cref{prop:pezzohyps} and the claim follows from \Cref{thm:ulrichdelpezzo} and \Cref{prop:ulrichdetrep}. For the singular case note that by \cite{Nuij} the set of all hyperbolic polynomials is the closure of the smooth ones. Further by \cite[Lem.~3.4]{hvelemt} the set of hyperbolic polynomials with a definite hermitian determinantal representation is closed.
\end{proof}

The following consequence is originally due to Hilbert \cite{hilbert1888}.

\begin{cor}
 Every nonnegative ternary quartic is a sum of three squares.
\end{cor}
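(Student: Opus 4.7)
The plan is to combine \Cref{thm:ulrichdelpezzo} on del Pezzo surfaces of degree two with the translation between positive Ulrich sheaves on double covers and sum-of-squares representations provided by \Cref{thm:ulrichsos}, handling the general (possibly singular) case by a closedness and density argument analogous to the one used for the preceding corollary on hyperbolic cubics.

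Let $p\in\R[x_0,x_1,x_2]_4$ be nonnegative. If $p$ is a square, there is nothing to prove, so assume $p$ is not a square. First I would treat the case that $p$ is strictly positive on $\R^3\setminus\{0\}$ and that $\cV(p)\subset\pp^2_\C$ is smooth. In that situation the double cover $X=\cV(y^2-p)\subset\pp(2,1,1,1)$ is a smooth real del Pezzo surface of degree two, the natural projection $\pi\colon X\to\pp^2$ is real fibered, and its pullback of a line is the anticanonical class; this is precisely case~$(3)$ of \Cref{prop:pezzohyps}, with the anticanonical identification coming from \Cref{ex:pezzo}. Then \Cref{thm:ulrichdelpezzo} supplies a positive hermitian $\pi$-Ulrich line bundle on $X$, and \Cref{thm:ulrichsos} applied with $r=1$ yields a representation of $p$ as a sum of $4r-1=3$ squares.

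For the general case I would approximate. Fix the strictly positive smooth quartic $q=x_0^4+x_1^4+x_2^4$ and consider $p_t=p+tq$ for $t>0$. Each $p_t$ is strictly positive on $\R^3\setminus\{0\}$, and since $p_t$ defines a smooth plane quartic for all sufficiently large $t$, the discriminant of $p_t$ in the variable $t$ is not identically zero; hence $\cV(p_t)\subset\pp^2_\C$ is smooth for all but finitely many $t>0$. Choosing a sequence $t_n\to 0^+$ of such good values, each $p_{t_n}$ is a sum of three squares of ternary quadratic forms by the smooth case. The cone of such three-square representations is closed in $\R[x_0,x_1,x_2]_4$: given $p_{t_n}=q_{1,n}^2+q_{2,n}^2+q_{3,n}^2$, the pointwise inequality $|q_{i,n}(v)|\leq\sqrt{p_{t_n}(v)}$ on the unit sphere yields uniform coefficient bounds on the $q_{i,n}$, so after extracting a convergent subsequence one obtains $p=q_1^2+q_2^2+q_3^2$ in the limit.

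The substantive work has already been carried out in \Cref{thm:ulrichdelpezzo}, whose proof uses the geometric analysis of the three-step blow-up chain $X_2\to X_3\to X_4$, the real lines on the cubic $X_3$, and a Riemann--Roch estimate to produce the Weil divisor $M$ needed in \Cref{thm:weil}. The only genuinely new obstacle is the passage from the smooth double cover to an arbitrary nonnegative quartic, and this is handled cleanly by the density/closedness argument above; the smoothness of $\cV(p)_\C$ rather than mere positivity of $p$ is needed to invoke \Cref{thm:ulrichdelpezzo}, and the perturbation by $q$ is exactly what arranges this.
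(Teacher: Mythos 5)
Your proof follows essentially the same route as the paper: apply \Cref{prop:pezzohyps}(3), \Cref{thm:ulrichdelpezzo} and \Cref{thm:ulrichsos} (with rank one and the hermitian count $4r-1=3$) to the smooth, strictly positive case, and then pass to the general case by a density and closedness argument. Your version merely spells out the limit argument in more detail---the explicit perturbation $p+tq$, the openness of smoothness, and the uniform coefficient bound guaranteeing closedness of the set of sums of at most three squares---whereas the paper condenses this to a single sentence; the underlying argument is identical.
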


\begin{proof}
 First consider a nonnegative ternary quartic $p$ with smooth zero set. The hypersurface defined by $y^2-p$ in $\pp(2,1,1,1)$ is an instance of \Cref{prop:pezzohyps}(3). Thus the claim follows from \Cref{thm:ulrichdelpezzo} and \Cref{thm:ulrichsos}. The general case now follows from a limit argument as the set of sums of squares is closed in $\R[x_0,x_1,x_2]_4$.
\end{proof}

\begin{cor}
 The Chow form of a smooth hyperbolic surface in $\pp^4$ of degree four, which is a complete intersection of two quadrics, has a definite hermitian determinantal representation.
\end{cor}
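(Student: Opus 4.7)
The strategy mirrors the one used for the Chow form of hyperbolic curves in \Cref{sec:curves}: produce a positive hermitian Ulrich line bundle on $X$ for a real fibered linear projection, and then invoke the dictionary between Ulrich sheaves and determinantal representations of Chow forms from \cite[Thm.~5.7]{eliich} as in the cited corollary of Section~\ref{sec:curves}.

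First, I would check that $X$ is a real del Pezzo surface of degree four whose real locus is a disjoint union of two spheres. By adjunction the anticanonical class of $X$ is the restriction of $\cO_{\pp^4}(1)$, and the embedding $X\hookrightarrow\pp^4$ is the anticanonical one (see \Cref{ex:pezzo}). Hyperbolicity of $X$ with respect to a generic $E\subset\pp^4$ of codimension $3$ gives a finite real fibered linear projection $X\to\pp^2$ of degree four whose pullback of a line is $-K_X$. Going through the real classification in \cite{russo} exactly as in the proof of \Cref{prop:pezzohyps}, this forces $X(\R)$ to consist of two spheres, placing us in case~(1) of \Cref{prop:pezzohyps}.

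Second, I would apply \Cref{thm:ulrichdelpezzo} to any such real fibered $f:X\to\pp^2$ of degree four with $f^*\cO_{\pp^2}(1)\cong\cO_X(-K)$. This directly yields a positive hermitian $f$-Ulrich line bundle $\cL$ on $X$; the construction is the one already carried out in the proof of that theorem, using the divisor $M=L_2-L_1-K$ built from the two lines on the underlying cubic surface $X_3$ obtained by blowing down a real point on a spherical component.

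Third, since the anticanonical embedding realizes $-K$ as the hyperplane class on $X\subset\pp^4$, the morphism $f$ is induced by a three-dimensional subspace of $H^0(X,\cO_X(1))$ and hence extends to a linear projection $\pp^4\dashrightarrow\pp^2$ whose center is disjoint from $X$. Thus the positive hermitian $f$-Ulrich line bundle $\cL$ is in particular a positive hermitian Ulrich line bundle in the sense of the embedded variety $X\subset\pp^4$. Invoking \cite[Thm.~5.7]{eliich} and \cite[Rem.~4.4]{eliich} in exactly the same manner as in the Chow form corollary of \Cref{sec:curves} then translates the existence of $\cL$ into a definite hermitian determinantal representation of the Chow form of $X$.

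The only genuine work is the first and second steps: verifying that we are in case~(1) of \Cref{prop:pezzohyps} and that \Cref{thm:ulrichdelpezzo} applies. The remaining step is a formal invocation of the results from \cite{eliich} and should be no harder than in the curve case, so I expect the main obstacle to be purely bookkeeping about the real topology of $X$ and the identification of $f$ as a linear projection, both of which are handled by Propositions~\ref{prop:pezzohyps} and~\ref{lem:quarticsurface}.
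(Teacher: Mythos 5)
Your proposal is correct and follows essentially the same route as the paper: place $X$ in case (1) of \Cref{prop:pezzohyps}, apply \Cref{thm:ulrichdelpezzo} to obtain a positive hermitian Ulrich line bundle, and then pass to the Chow form via \cite[Thm.~5.7]{eliich} and \cite[Rem.~4.4]{eliich}. The only nuance the paper makes explicit that you gloss over slightly is that \cite[Thm.~5.7]{eliich} requires a (straightforward) adaptation to the hermitian case rather than a verbatim invocation.
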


\begin{proof}
 Here we are in case (1) of \Cref{prop:pezzohyps}.
 The claim follows from \Cref{thm:ulrichdelpezzo} together with a straight-forward adaption of the proof of \cite[Thm.~5.7]{eliich} to the hermitian case and \cite[Rem.~4.4]{eliich}.
\end{proof}

\begin{rem}
 We have seen that every nonnegative polynomial $p\in\R[x_0,x_1,x_2]_4$ is a sum of squares and every hyperbolic polynomial $h\in\R[x_0,x_1,x_2,x_3]_3$ has a definite hermitian determinantal representation, i.e., the associated real fibered morphisms admit a positive Ulrich sheaf. This is no longer true if we increase the degrees: Not every nonnegative polynomial $p\in\R[x_0,x_1,x_2]_6$ is a sum of squares \cite{hilbert1888} and there are hyperbolic polynomials $h\in\R[x_0,x_1,x_2,x_3]_4$ such that no power $h^r$ has a definite determinantal representation, take for example the polynomial considered in \cite{kumvam}. Double covers of $\pp^2$ ramified along plane sextic curves and quartic hypersurfaces in $\pp^3$ both belong to the class of K3 surfaces. So it would be very interesting to understand which real fibered morphisms $X\to\pp^2$ from a K3 surface $X$ admit a positive Ulrich bundle. Note that (not necessarily positive) Ulrich sheaves of rank two on K3 surfaces have been constructed in \cite{k3ulrich}. Similarly, we can increase the dimensions: Not every nonnegative polynomial $p\in\R[x_0,x_1,x_2,x_3]_4$ is a sum of squares \cite{hilbert1888} and it is not known whether there are hyperbolic polynomials $h\in\R[x_0,x_1,x_2,x_3,x_4]_3$ such that no power $h^r$ has a definite determinantal representation, see \cite[\S5]{sos42} for cubic hyperbolic hypersurfaces. Double covers of $\pp^3$ ramified along quartic surfaces and cubic hypersurfaces in $\pp^4$ both belong to the class of  Fano threefolds of  index two. In \cite[\S6]{ulrichintro}  Ulrich sheaves of rank two on such threefolds have been constructed.
\end{rem}

\begin{prob}
 Understand which finite surjective and real fibered morphisms $X\to\pp^n$ admit a positive Ulrich sheaf for $X$ a K3 surface or a Fano threefold of index two. If such exist, what are their ranks? Are there hyperbolic cubic hypersurfaces in $\pp^4$ that do not carry a positive Ulrich sheaf?
\end{prob}

We conclude this section with some examples.

\begin{ex}\label{ex:cubic}
 Let $h=x_0^3-x_0(2x_1^2+2x_2^2+x_3^2)+x_1^3+x_1x_2^2$. The hypersurface $X=\cV(h)\subset\pp^3$ is hyperbolic with respect to $e=[1:0:0:0]$ and contains the real line $L=\cV(x_0,x_1)$. The hyperplanes $H_1=\cV(x_0)$ and $H_2=\cV(x_0-x_1)$ contain $L$ and are tangent to the hyperbolicity cone of $h$. The quadratic polynomial $p=x_0(x_0-x_1)$ is an interlacer of $h$ and its zero divisor on $X$ is $$D=2L+L_1+\overline{L_1}+L_2+\overline{L_2}$$ where $L_1=\cV(x_0,x_1+\textrm{i}x_2)$ and $L_2=\cV(x_0-x_1,x_2+\textnormal{i}x_3)$. Thus we have $D=M+\overline{M}$ with $M=L+L_1+L_2$. The space of all quadrics vanishing on $L, L_1$ and $L_2$ is spanned by $$x_0(x_0-x_1), x_0(x_2+\textnormal{i}x_3),(x_0-x_1)(x_1+\textrm{i}x_2).$$The minimal free resolution over $S=\R[x_0,x_1,x_2,x_3]$ of the ideal in $S/(h)$ generated by these quadrics has length one and is given by the matrix $$\begin{pmatrix}x_0+x_1& -x_2-\textnormal{i}x_3& -x_1-\textnormal{i}x_2\\ -x_2+\textnormal{i}x_3& x_0-x_1& 0\\ -x_1+\textnormal{i}x_2& 0&
      x_0                                                                                                                                                                                                                                                                                                                                                                                                                                                                                                                                                                                                                                                                                                                                                                                                                                                                                                        \end{pmatrix}
 .$$This matrix is hermitian and positive definite at $e$. Its determinant is indeed $h$.
\end{ex}

\begin{ex}
 Consider the following nonnegative ternary quartic {$$p=x_0^4+2 x_0^2 x_1^2+2 x_0 x_1^3+x_1^4+x_0^2 x_2^2-2 x_0 x_1 x_2^2-x_1^2 x_2^2+x_2^4$$}\normalsize and  let $X\subset\pp(2,1,1,1)$ be the corresponding double cover defined by $y^2=p$. On $X$ we have the two lines $$L_1=\cV(y+(1-\textnormal{i})x_1^2-x_2^2,x_0+\textnormal{i}x_1)\textrm{ and }L_2=\cV(y+\textnormal{i}x_1x_2-x_1^2+x_2^2,x_0+\textnormal{i}x_2).$$
 The principal divisor associated to the rational function $$f=\frac{x_0^2+x_2^2}{y-x_0x_1-x_1^2+x_2^2}$$ is $L_2-L_1+\overline{L_2}-\overline{L_1}$. As $L_1$ and $L_2$ are both non-real lines, this implies that $f$ has constant sign on each of the two connected components of $X(\R)$. Evaluating $f$ at points from the two different components, for example at $$[y:x_0:x_1:x_2]=[\pm1:1:0:0],$$ shows that $f$ changes sign.  
 Letting $K$ be a canonical divisor on $X$, we have for $M=L_2-L_1-K$ that $M+\overline{M}=(f)-2K$ as in the proof of \Cref{thm:ulrichdelpezzo}.
 We can realize the divisor class of $M$ by the ideal in $\R[y,x_0,x_1,x_2]/(y^2-p)$ that is generated by $y-x_0x_1-x_1^2+x_2^2$ and $(x_0+\textnormal{i}x_1)(x_0+\textnormal{i}x_2)$. The minimal free resolution over the ring $\R[y,x_0,x_1,x_2]$ has length one and is given by the matrix
 $$\begin{pmatrix}
    y-x_0x_1-x_1^2+x_2^2 & x_0^2-x_1x_2+\textrm{i}(x_0x_1+x_0x_2) \\
    x_0^2-x_1x_2-\textrm{i}(x_0x_1+x_0x_2) & y+x_0x_1+x_1^2-x_2^2
   \end{pmatrix}.$$Indeed, we have that $A^2=p\cdot I$. Therefore $$p= (x_0x_1+x_1^2-x_2^2)^2 + (x_0^2-x_1x_2)^2+(x_0x_1+x_0x_2)^2.$$
 The key ingredient for this construction was the rational function $f$. One way to find it is to blow down a real line of $X$ and then proceed as in \Cref{lem:cubicsurface}.
\end{ex}

\begin{ex}
 The del Pezzo surface $$X=\cV(x_0^2+x_1^2+x_2^2-x_3^2,x_0^2+4x_1^2+9x_2^2-x_4^2)\subset\pp^4$$ of degree four is hyperbolic with respect to the line $E$ spanned by $[0:0:0:1:0]$ and $[0:0:0:0:1]$. The rational function $\frac{(x_3-x_0)(x_4-x_0)}{x_0^2}$ has different signs on the two connected components of $X(\R)$. Its corresponding divisor is of the form $M+\overline{M}+2K$ for a suitable divisor $M$ on $X_\C$ with $\ell(M)=4$. Thus $\cL(M)$ is a positive hermitian Ulrich line bundle by \Cref{thm:weil}. As in \cite[Thm.~0.3]{ES03} we obtain from that the following determinantal representation of the Chow form of $X$, written in the Pl\"ucker coordinates:  
    \[\scalebox{0.6}{$\begin{pmatrix}
    2 x_{03}-2 x_{04}+2 x_{34}& 4 x_{01}-6 \textrm{i} x_{02}+4 x_{13}-6 \textrm{i} x_{23}&      -2 x_{01}+2 \textrm{i} x_{02}-2 x_{14}+2 \textrm{i} x_{24}& -2 \textrm{i} x_{12}\\4 x_{01}+6 \textrm{i} x_{02}+4 x_{13}+6 \textrm{i} x_{23}& -2 x_{03}-2 x_{04}+2 x_{34}&      10 \textrm{i} x_{12}& 2 x_{01}-2 \textrm{i} x_{02}-2 x_{14}+2 \textrm{i} x_{24}\\-2 x_{01}-2 \textrm{i} x_{02}-2 x_{14}-2 \textrm{i} x_{24}& -10 \textrm{i} x_{12}&      2 x_{03}+2 x_{04}+2 x_{34}& -4 x_{01}+6 \textrm{i} x_{02}+4 x_{13}-6 \textrm{i} x_{23}\\2 \textrm{i} x_{12}& 2 x_{01}+2 \textrm{i} x_{02}-2 x_{14}-2 \textrm{i} x_{24}&    -  4 x_{01}-6 \textrm{i} x_{02}+4 x_{13}+6 \textrm{i} x_{23}& -2 x_{03}+2 x_{04}+2 x_{34}
   \end{pmatrix}$}\]\normalsize
 We observe that it is hermitian and positive definite at $E$.
\end{ex}

%\bigskip

% \noindent \textbf{Acknowledgements.}
% We would like to thank somebody.

 \bibliographystyle{alpha}
 \bibliography{biblio}
 \end{document}